\documentclass{amsart}

\usepackage{amsmath,amssymb}
\usepackage{amsfonts}
\usepackage{graphicx}
\usepackage{latexsym}
\usepackage{array}
\usepackage{subfig}
\usepackage{stmaryrd}
\usepackage{color}
\usepackage{multicol}
\usepackage{vwcol}
\usepackage{xcolor}

\usepackage[T3,T1]{fontenc}
\DeclareSymbolFont{tipa}{T3}{cmr}{m}{n}
\DeclareMathAccent{\invbreve}{\mathalpha}{tipa}{16}
\usepackage[
    colorlinks=true,
    linkcolor=blue,      
    citecolor=blue,      
    urlcolor=blue        
]{hyperref}

\numberwithin{equation}{section} \numberwithin{figure}{section}

{\theoremstyle{remark} }
\newtheorem{definition}{Definition}[section]

\newtheorem{theorem}{Theorem}[section]
\newtheorem{corollary}{Corollary}[section]
\newtheorem{lemma}{Lemma}[section]

\def\XXint#1#2#3{{\setbox0=\hbox{$#1{#2#3}{\int}$ }
\vcenter{\hbox{$#2#3$ }}\kern-.6\wd0}}

\DeclareMathOperator{\supp}{supp}

\newcommand{\RR}{{\mathbb{R}}}
\newcommand{\NN}{{\mathbb{N}}}
\newcommand{\eps}{\varepsilon}
\newcommand{\mE}{\mathcal{E}}
\newcommand{\mc}[1]{\mathcal{#1}}
\DeclareMathOperator{\diam}{diam}
\DeclareMathOperator{\dist}{dist}

\begin{document} 
\title[Almost $L^\infty$-stability of the Ritz projection for $0<s\leq 1/2$]{Almost $L^\infty$-stability of the Ritz projection for the integral fractional Laplacian with $0<s\leq \tfrac12$}
\thanks{This work was supported by the National Natural Science Foundation
of China under Grant Nos.~12571383 and~12288101.}
\author[J.~Li]{Jia Li}
\address[J.~Li]{School of Mathematical Sciences, Peking University,
  Beijing, 100871, P.R. China}
\email{lijia2002@stu.pku.edu.cn}

\author[S.~Wu]{Shuonan~Wu}
\address[S.~Wu]{School of Mathematical Sciences, Peking University,
  Beijing, 100871, P.R. China}
\email{snwu@math.pku.edu.cn}

\subjclass[2020] {65N30, 65N15, 35R11}
\keywords{Finite element methods, integral fractional Laplacian, $L^\infty$ stability}

\date{}

\begin{abstract}
For $0<s\leq1/2$ and $d\geq1$, the $\widetilde H^s$-projection
onto piecewise linear finite element spaces on quasi-uniform meshes of a
bounded Lipschitz domain in $\RR^d$ with a uniform exterior ball
condition is almost stable in the maximum norm.
The mesh domain need not coincide with the original domain.
\end{abstract}

\maketitle


\section{Introduction} \label{sec:intro}

This paper concerns the maximum-norm stability of the finite element Ritz
projection associated with the integral fractional Laplacian. We consider
the homogeneous Dirichlet problem
\begin{equation}\label{eqn:intro-model}
 \begin{cases}
  (-\Delta)^s u=f & \text{in }\Omega,\\
  u=0 & \text{in }\Omega^c:=\RR^d\setminus\Omega,
 \end{cases}
\end{equation}
where $0<s\leq\frac{1}{2}$ and $\Omega\subset\RR^d$, $d\geq1$, is a bounded Lipschitz domain satisfying
a uniform exterior ball condition. The natural energy space is
$\widetilde H^s(\Omega)$, consisting of functions whose zero extensions
belong to $H^s(\RR^d)$. If $a_s(\cdot,\cdot)$ denotes the associated energy
form and $V_h$ is a conforming finite element space, the Ritz projection
$R_h^s$ is determined by
\[
 a_s(R_h^s u,v_h)=a_s(u,v_h)\qquad\forall v_h\in V_h.
\]

Maximum-norm estimates play an important role in the finite element
analysis of nonlinear and parabolic problems~\cite{Thomee2006}, as well as
optimal control problems with pointwise state constraints. We refer
to~\cite{Wahlbin1991} for the underlying local and pointwise error
analysis. For the integral fractional
Laplacian, boundary singularities occur even for smooth
data~\cite{RosOtonSerra2014,AcostaBorthagaray2017}, limiting
the applicability of error estimates that require globally bounded
higher derivatives. A maximum-norm best approximation estimate adapts
to the regularity of the solution through its approximation in the finite
element space. This motivates the study of maximum-norm stability of the
Ritz projection for bounded functions in the energy space.

For the classical Poisson equation, maximum-norm estimates for finite
element approximations have a long history. Classical developments
include weighted-norm estimates~\cite{Nitsche1975} and Green's function
analysis~\cite{Scott1976}. Our analysis follows the local approach of
Schatz and Wahlbin~\cite{SchatzWahlbin1977}, combining local energy
error estimates~\cite{NitscheSchatz1974,DemlowGuzmanSchatz2011}, dyadic
decompositions, and localized duality. This approach yields maximum-norm
almost best approximation for the $H_0^1$-projection on quasi-uniform
meshes over smooth domains~\cite{SchatzWahlbin1982}. Corresponding
results on convex polygons and polyhedra were obtained
in~\cite{LeykekhmanVexler2016}. Further results can be found
in~\cite{SchatzWahlbin1978,SchatzWahlbin1979,Schatz1980,
ApelRoschSirch2009,ApelRogovsPfeffererWinkler2020,Li2022}
and the references therein.

For the integral fractional Laplacian, a priori energy-norm error estimates
for conforming finite elements on quasi-uniform and graded meshes were
established in~\cite{AcostaBorthagaray2017}, while $H^1$-norm error estimates
for $s>1/2$ were obtained in~\cite{BorthagarayCiarlet2019}.
Adaptive algorithms and a posteriori error analysis have also been
developed for this and related nonlocal
problems~\cite{AinsworthGlusa2017,FaustmannMelenkPraetorius2021,
GimperleinStocek2019,NochettoPetersdorffZhang2010,ZhaoHuCaiKarniadakis2017}.
Other approaches include a nonconforming discretization based on a
Dunford--Taylor representation~\cite{BonitoLeiPasciak2019} and a local
discontinuous Galerkin method~\cite{HanWuZhou2025}. For broader discussions
of fractional finite element methods and their implementation, we refer
to~\cite{BonitoBorthagarayNochettoOtarolaSalgado2018,BorthagarayLiNochetto2020}.

Pointwise error analysis for the integral fractional Laplacian has been
developed for finite difference schemes. A finite difference-quadrature
method was introduced in~\cite{HuangOberman2014}. Further finite difference
approximations for the operator and
fractional Poisson problems were developed
in~\cite{DuoVanWykZhang2018,DuoZhang2019}. The quantitative accuracy
estimates in these works assume
global regularity that is not generally available for the Dirichlet
problem because of boundary singularities. A monotone two-scale discretization on
unstructured grids was developed in~\cite{HanWu2022}, with pointwise error
estimates based on weighted H\"older
regularity compatible with these singularities.
Monotone schemes have also been developed for fractional obstacle
problems~\cite{HanWuZhou2023} and a class of integro-differential
equations~\cite{BorthagarayNochettoSalgadoTorres2026}.

For piecewise linear finite elements on one-dimensional uniform meshes,
an $M$-matrix analysis yields discrete maximum principles for $s_0<s<1$,
where $s_0\approx0.2347$~\cite{LiuShengWangYuan2021}.

Local energy estimates provide a key ingredient in this analysis.
Borthagaray, Leykekhman, and
Nochetto~\cite{BorthagarayLeykekhmanNochetto2021} established local error
estimates in the fractional energy seminorm. Estimates in both the
localized energy norm and the local $H^1$-norm were obtained
in~\cite{FaustmannKarkulikMelenk2022}. Our analysis builds
on~\cite{BorthagarayLeykekhmanNochetto2021}, with additional control of
the nonlocal tail terms. To the best of our knowledge, maximum-norm
stability of the fractional Ritz projection, even up to logarithmic
factors, on general quasi-uniform meshes in arbitrary dimensions has
remained an open problem.

Our main result applies to continuous piecewise linear finite elements on shape-regular, quasi-uniform simplicial meshes. We allow an approximate mesh domain $\Omega_h\subset\Omega$, with boundary discrepancy of order $h$, under the geometric assumptions stated in Section~\ref{sec:preliminaries}. Thus the mesh domain need not coincide with $\Omega$. We prove that
\begin{equation}\label{eqn:intro-stability}
 \|R_h^s u\|_{L^\infty(\Omega_h)}
 \leq C \ell_h^\alpha\|u\|_{L^\infty(\Omega)},
 \qquad u\in\widetilde H^s(\Omega)\cap L^\infty(\Omega),
\end{equation}
where
\[
 \ell_h :=1+|\log h|,\qquad
 \alpha :=
 \begin{cases}
  1,&0<s<1/2,\\
  d+2,&s=1/2.
 \end{cases}
\]
In particular, a single logarithmic factor suffices throughout $0<s<1/2$ under the stated Lipschitz and exterior ball assumptions. The estimate holds in arbitrary spatial dimensions and covers the case in which $s=1/2$; see
Theorem~\ref{thm:linf-stab}.

Since $R_h^s$ is a projection onto $V_h$, \eqref{eqn:intro-stability} immediately implies
\begin{equation}\label{eqn:intro-best-approximation}
 \|u-R_h^s u\|_{L^\infty(\Omega_h)}
 \leq C \ell_h^\alpha
 \inf_{v_h\in V_h}\|u-v_h\|_{L^\infty(\Omega)}.
\end{equation}
Any additional regularity of $u$ can be used through this approximation term, rather than imposed in the stability estimate itself.

The case in which $s=1/2$ also has a boundary element interpretation. On a
flat open screen, the Laplace hypersingular boundary integral operator
coincides, up to a fixed normalization, with $(-\Delta)^{1/2}$ acting on
zero-extended screen densities~\cite{GimperleinStocekUrzuaTorres2021}.
Consequently, our result gives maximum-norm almost stability of the
corresponding Galerkin projection under the stated geometric and mesh
assumptions. This complements the classical pointwise analysis of
boundary element methods on smooth closed
surfaces~\cite{RannacherWendland1985,RannacherWendland1988}.

If $\Omega_h\nsubseteq\Omega$, we consider an auxiliary domain $\widetilde\Omega$ with $\Omega\cup\Omega_h\subset\widetilde\Omega\subset \{x:\mathrm{dist}(x,\Omega)\leq C\delta_h\}$.
We assume that the auxiliary domain and its mesh approximation satisfy the geometric assumptions of Section~\ref{sec:preliminaries}, with constants independent of $h$.
Let $\widetilde{f}$ be the zero extension of $f$ to $\widetilde\Omega$, and let $\widetilde u$ solve the corresponding problem on $\widetilde\Omega$. The numerical solution is then $u_h=R_h^s\widetilde u$. Applying \eqref{eqn:intro-best-approximation} on $\widetilde\Omega$ along with the maximum principle gives
\begin{equation}\label{eqn:linf-optimal-out}
  \begin{split}
    \|u-u_h\|_{L^\infty(\Omega_h)} &\leq \|u-\widetilde u\|_{L^\infty(\widetilde\Omega)} + C\ell_h^\alpha\inf_{v_h\in V_h}\|\widetilde u-v_h\|_{L^\infty(\widetilde\Omega)} \\
  &\leq \|\widetilde u\|_{L^\infty(\widetilde\Omega\setminus\Omega)} + C\ell_h^\alpha\inf_{v_h\in V_h}\|\widetilde u-v_h\|_{L^\infty(\widetilde\Omega)}.
  \end{split}
 \end{equation}
A standard exterior-ball barrier argument bounds the first term by $C\delta_h^s\|f\|_{L^\infty(\Omega)}$~\cite[Lemma~2.7]{RosOtonSerra2014}.
In the rest of the paper, we assume $\Omega_h\subset\Omega$.

Our proof builds on the local approach of Schatz and Wahlbin. Nonlocality
introduces unknown errors on distant annuli into the local error estimates,
requiring simultaneous control of local terms and nonlocal tails. Near the
boundary, the analysis must also retain interactions across the domain
boundary. We therefore distinguish full annuli from their intersections
with the domain and measure the fractional energy of zero extensions on
the full annuli. This yields the boundary-weighted control needed in the
localized duality argument. Combining fractional cutoff and oscillation
estimates with Green's function bounds, we establish local regularity
estimates for localized sources with explicit dependence on the relative
annular scales. We also extend the local energy estimate
of~\cite{BorthagarayLeykekhmanNochetto2021} to boundary neighborhoods and
establish local interpolation estimates that account for the discrepancy
between the mesh domain and the original domain.

Through a discrete dual problem, we reduce pointwise control of the Ritz
projection to an $L^1$ bound for the fractional Laplacian of the dual
solution. Local energy estimates and localized duality, together with the
preceding regularity and interpolation estimates, yield coupled bounds
for the sequences of annular energy and $L^2$ errors. The key is to use
the precise mutual control of these two sequences to handle the nonlocal
tails involving all annuli and close the estimates. Summation over the
annuli then gives the required $L^1$ bound and hence maximum-norm stability
up to logarithmic factors.

The restriction $0<s\leq1/2$ reflects the regularity available for the
auxiliary dual problems. For square-integrable data, full $H^{2s}$
regularity holds for $s<1/2$, with an arbitrarily small regularity loss
at $s=1/2$; see~\cite[Corollary~3.8]{BorthagarayLiNochetto2024} and
\cite[Remark~6]{BorthagarayNochetto2023}. These results are recalled in
Theorem~\ref{thm:sobolev-shift-lip}. In this range, a single dyadic
decomposition centered at the dual source suffices to close the analysis.
For $s>1/2$, the typical boundary behavior
$\dist(x,\partial\Omega)^s$ limits global Sobolev regularity to orders
below $s+1/2<2s$~\cite{AcostaBorthagaray2017,BorthagarayNochetto2023}.
Extending the present approach therefore calls for further analysis of
the boundary scale in addition to the distance from the dual source.
Several ingredients, including the local interpolation estimates and
the extension of the local energy estimate to boundary neighborhoods,
already apply to the full range $0<s<1$. They provide a basis for this
extension, which will be addressed in subsequent work. The localization
and sequence estimates may also be useful for more general nonlocal
operators, including variable-coefficient kernels, once corresponding
regularity and Green's function estimates are available.

The remainder of the paper is organized as follows.
Section~\ref{sec:preliminaries} introduces the notation, assumptions,
and main theorem. Section~\ref{sec:regularity} establishes local
regularity estimates for localized sources, and
Section~\ref{sec:interpolation} proves the required interpolation
estimates. Section~\ref{sec:annular-error} derives and closes the coupled
annular error estimates. Section~\ref{sec:linf-stability} completes the
maximum-norm stability proof. Appendices~\ref{app:critical-case}
and~\ref{app:local-energy-error} treat the critical case $(s,d)=(1/2,1)$
and the extension of the local energy estimate to boundary neighborhoods,
respectively.

Throughout, $C$ denotes a generic positive constant that may change at each
occurrence and is independent of $h$ and the functions being estimated,
unless otherwise stated. We write $A\simeq B$ if $C^{-1}B\leq A\leq CB$,
with subscripts indicating the parameters on which the constants may depend.

\section{Preliminaries and main results}\label{sec:preliminaries}
We fix the notation, state the main results, and collect auxiliary estimates for fractional Sobolev norms.
\subsection{Problem setting and notation}
Let $\Omega\subset\RR^d$, $d\geq1$, be a bounded Lipschitz domain satisfying a uniform exterior ball condition: there exists $r_e>0$ such that, for every $x\in\partial\Omega$ and $0<r<r_e$, there is a ball $B_r$ of radius $r$ with $\overline B_r\cap\overline\Omega=\{x\}$. By scaling, we assume $\diam\Omega\leq1$. The Lipschitz property implies the uniform exterior measure-density condition: there exist $r_\Omega,c_\Omega>0$ such that
\begin{equation}\label{eqn:exterior-density}
 |B_r(x)\cap\Omega^c|\geq c_\Omega r^d
 \quad
 \forall x\in\partial\Omega, 0<r\le r_\Omega.
\end{equation}
\begin{definition}
For $0<\sigma<1$ and a measurable set $D\subset\RR^d$, define the normalized Aronszajn--Gagliardo--Slobodeckij seminorm by
\begin{equation}\label{eq:slobo}
 \begin{aligned}
 |v|_{H^\sigma(D)}^2
 :=\frac{c_{d,\sigma}}2\int_D\int_D
 \frac{|v(x)-v(y)|^2}{|x-y|^{d+2\sigma}}\,\mathrm{d}x\mathrm{d}y, \quad c_{d,\sigma}
 :=\frac{2^{2\sigma}\sigma\Gamma(d/2+\sigma)}
 {\pi^{d/2}\Gamma(1-\sigma)}.
 \end{aligned}
\end{equation}
We set $H^\sigma(D):=\{v\in L^2(D):|v|_{H^\sigma(D)}<\infty\}$, with norm $\|v\|_{H^\sigma(D)}^2:=\|v\|_{L^2(D)}^2+|v|_{H^\sigma(D)}^2$.
For an open set $D$, define
\[
 H^{1+\sigma}(D)
 :=\{v\in H^1(D):\partial_jv\in H^\sigma(D),\ 1\le j\le d\},
\]
with seminorm and norm
\begin{equation}\label{eq:slobo-higher-order}
 \begin{aligned}
 |v|_{H^{1+\sigma}(D)}^2
 &:=\sum_{j=1}^d|\partial_jv|_{H^\sigma(D)}^2
 =\frac{c_{d,\sigma}}2\int_D\int_D
 \frac{|\nabla v(x)-\nabla v(y)|^2}
 {|x-y|^{d+2\sigma}}\,\mathrm{d}x\mathrm{d}y,\\
 \|v\|_{H^{1+\sigma}(D)}^2
 &:=\|v\|_{H^1(D)}^2+|v|_{H^{1+\sigma}(D)}^2.
 \end{aligned}
\end{equation}
At integer orders we use $H^0(D):=L^2(D)$ and
\[
 |v|_{H^0(D)}:=\|v\|_{L^2(D)},\quad
 |v|_{H^1(D)}:=\|\nabla v\|_{L^2(D)},\quad
 |v|_{H^2(D)}:=\|D^2v\|_{L^2(D)},
\]
where $D^2v$ is equipped with the Frobenius norm, and
$\|v\|_{H^m(D)}^2:=\sum_{j=0}^m|v|_{H^j(D)}^2$ for $m=1,2$.
\end{definition}

For $0\leq t\leq2$, set
\[
 \widetilde H^t(\Omega)
 :=\{v\in H^t(\RR^d):v=0\ \text{a.e. in }\Omega^c\},
\]
with the inherited $H^t(\RR^d)$ norm.

For sufficiently smooth, compactly supported $u$, the integral fractional
Laplacian is defined by
\[
 (-\Delta)^s u(x)
 :=c_{d,s}\operatorname{p.v.}\int_{\RR^d}
 \frac{u(x)-u(y)}{|x-y|^{d+2s}}\,\mathrm{d}y.
\]
Its associated bilinear form on $\widetilde H^s(\Omega)$ is
\[
 a_s(u,\varphi)
 :=\frac{c_{d,s}}2\int_{\RR^d}\int_{\RR^d}
 \frac{(u(x)-u(y))(\varphi(x)-\varphi(y))}
 {|x-y|^{d+2s}}\,\mathrm{d}x\mathrm{d}y.
\]
Given $f\in L^2(\Omega)$, the weak formulation of \eqref{eqn:intro-model} is to find $u\in\widetilde H^s(\Omega)$ such that
\begin{equation}\label{eqn:fractional-laplacian}
a_s(u,v)
=
\int_\Omega f v\,\mathrm{d}x,
\quad
\forall v\in\widetilde H^s(\Omega).
\end{equation}
Existence and uniqueness follow from the fractional Poincar\'e
inequality (see~\cite[Lemma~6.1]{DiNezzaPalatucciValdinoci2012})
and the Lax--Milgram theorem.

\subsection{Finite element space and main result}
Let $\{\mathcal T_h\}_{h>0}$ be a shape-regular, quasi-uniform
family of simplicial meshes. Set $h_T:=\diam T$ and
$h:=\max_{T\in\mathcal T_h}h_T$. Let $\rho_T$ be the inradius
of $T$ and assume
$\gamma_h:=\max_{T\in\mathcal T_h}h/\rho_T\leq\gamma<\infty$.
For the mesh domain
$\Omega_h:=\operatorname{int}\bigl(\bigcup_{T\in\mathcal T_h}T\bigr)\subset\Omega$, we assume
\begin{equation}\label{eqn:mesh-domain-distance}
 \delta_h:=d_H(\partial\Omega,\partial\Omega_h)
 \leq C_{\mathrm{geo}}h,  \text{ and }  
 \Omega\setminus\Omega_h
\subset\{x\in\RR^d:\dist(x,\partial\Omega)
\leq C_{\mathrm{geo}}\delta_h\},
\end{equation}
where $d_H$ denotes the Hausdorff distance.
The piecewise linear Lagrange space is
\[
 \widehat V_h
 :=\{v_h\in C^0(\overline\Omega_h):v_h|_T\in\mathbb P_1
       \text{ for all }T\in\mc T_h\},
\]
and $V_h:=\widehat V_h\cap H_0^1(\Omega_h)$ is its subspace with homogeneous boundary conditions.
Functions on $\Omega$ or $\Omega_h$ are extended by zero to $\RR^d$ whenever needed, without changing notation.
The Ritz projection \(R_h^s:\widetilde H^s(\Omega)\to V_h\) is defined by
\begin{equation}\label{finite element}
     a_s(R_h^s u,v_h)=a_s(u,v_h)
 \qquad\forall v_h\in V_h.
\end{equation}
Our main result is the following theorem.

\begin{theorem}[almost $L^\infty$-stability]\label{thm:linf-stab}
    Let $\Omega\subset\RR^d$, $d\geq1$, be a bounded Lipschitz domain
    satisfying a uniform exterior ball condition, and let $0<s\leq1/2$.
    Let $\{\mathcal T_h\}_{h>0}$ be a shape-regular, quasi-uniform family
    of simplicial meshes of $\Omega_h\subset\Omega$ satisfying
    \eqref{eqn:mesh-domain-distance}.
    Then the Ritz projection $R_h^s$ onto the continuous piecewise linear
    space $V_h\subset H_0^1(\Omega_h)$, defined by \eqref{finite element},
    satisfies, for every $u\in\widetilde H^s(\Omega)\cap L^\infty(\Omega)$,
    \begin{equation}\label{eqn:linf-stab}
        \|R_h^s u\|_{L^\infty(\Omega_h)} \leq C \ell_h^{\alpha} \|u\|_{L^\infty(\Omega)},
    \end{equation}
    where $C=C(d,s,\Omega,\gamma,C_{\mathrm{geo}})$ is independent of $h$
    and $u$. Here $\ell_h:=1+|\log h|$ and
    $\alpha:=1+\frac{d+2s}{2s}\xi$, with
    \begin{equation}\label{eqn:xi-alpha}
    \xi := \begin{cases}
        0 & s<1/2,\\
        1 & s=1/2,\\
    \end{cases}
    \quad \text{i.e.} \quad \alpha = \begin{cases}
        1 & s<1/2,\\
        d+2 & s=1/2.\\
    \end{cases}
    \end{equation}
\end{theorem}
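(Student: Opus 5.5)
We follow the Schatz--Wahlbin duality approach, as outlined in the introduction. Fix $x_0\in\Omega_h$ and the element $T_0\ni x_0$ where $|R_h^su|$ attains its maximum. Let $\delta_h^{x_0}\in C_c^\infty(T_0)$ be a smoothed Dirac mass: $\int_{T_0}\delta_h^{x_0}v_h=v_h(x_0)$ for all $v_h\in V_h$, with $\|\delta_h^{x_0}\|_{L^\infty}\le Ch^{-d}$, $\|\delta_h^{x_0}\|_{L^2}\le Ch^{-d/2}$. Let $g\in\widetilde H^s(\Omega)$ be the continuous dual solution, $a_s(v,g)=(\delta_h^{x_0},v)$, and let $g_h=R_h^sg$ be its Ritz projection. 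Symmetry gives
\[
R_h^su(x_0)=a_s(R_h^su,g_h)=a_s(u,g_h).
\]
Since $u$ vanishes outside $\Omega$ and $g_h$ is smooth enough, $a_s(u,g_h)=\int_\Omega u\,(-\Delta)^sg_h$, understood as an $L^1$ pairing. Therefore
\[
|R_h^su(x_0)|\le\|u\|_{L^\infty(\Omega)}\,\|(-\Delta)^sg_h\|_{L^1(\Omega)},
\]
and the theorem reduces to showing $\|(-\Delta)^sg_h\|_{L^1(\Omega)}\le C\ell_h^\alpha$. Splitting $(-\Delta)^sg_h=\delta_h^{x_0}+(-\Delta)^s(g_h-g)$ on $\Omega$, we need $\|(-\Delta)^s(g-g_h)\|_{L^1(\Omega)}\le C\ell_h^\alpha$.

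We then take a dyadic decomposition centred at $x_0$. Let $d_j=2^{-j}$ for $j\le J$ with $d_J\simeq Kh$. The full annuli are $A_j=\{d_{j+1}\le|x-x_0|\le d_j\}$, the central ball is $A_*=B_{d_J}(x_0)$, and we also use the intersections $\Omega_j=A_j\cap\Omega$. On each $\Omega_j$ we apply Cauchy--Schwarz and a local bound for the fractional Laplacian of $e=g-g_h$. The local part is controlled by the local $H^{2s}$ norm, or by $h^{-s}$ times the local $H^s$ energy on a slightly larger annulus, using inverse estimates for $g_h$ and the regularity of $g$. The nonlocal tail from annuli $A_k$ is controlled by sums of $d_k^{-2s}$-weighted $L^2$ errors with kernel factors $(d_j/\max(d_j,d_k))^{d+2s}$. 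This gives
\[
\|(-\Delta)^se\|_{L^1(\Omega)}\lesssim\sum_j d_j^{d/2}\Big(d_j^{-s}E_j+d_j^{-2s}L_j+\text{tail}_j\Big),
\]
where $E_j=|e|_{\widetilde H^s}$ is measured on $A_j$, including interactions across $\partial\Omega$ (the zero extension), and $L_j=\|e\|_{L^2(A_j)}$.

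Next come the coupled annular estimates. For $E_j$ we use the local energy estimate of Borthagaray--Leykekhman--Nochetto, extended to boundary neighbourhoods as in the appendix. It bounds $E_j$ by the best local approximation (the local interpolation estimates, including the $\Omega\setminus\Omega_h$ discrepancy) plus $d_j^{-s}L_{j}'$ on an enlarged annulus plus nonlocal tails involving $E_k,L_k$ for all $k$. Away from $x_0$ the data vanish, so the local regularity estimates for localized sources give $\|g\|_{H^{2s}}$ on $A_j$ roughly $\lesssim d_j^{-d/2-s}\cdot(\text{scale factors})$. The interpolation part of $E_j$ then contributes $h^{s}d_j^{-d/2-s}$. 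For $L_j$ we use localized duality: solve $(-\Delta)^s w=e\chi_{A_j}$ and invoke Theorem~\ref{thm:sobolev-shift-lip} ($H^{2s}$ shift, with an $\varepsilon$ loss at $s=1/2$) together with the Green's function decay of $w$ away from $A_j$. Galerkin orthogonality then yields $L_j\lesssim h^s E_j+\sum_k(\text{decaying weights})(h^sE_k+\dots)$.

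The main obstacle is closing this coupled system. The tails involve all annuli, including the inner ones, where the errors are large, so a naive bound reintroduces the unknowns with $O(1)$ constants. We would weight by $d_j^{d/2+s}$, write the system as $\mathbf{E}\le\mathbf{b}+M\mathbf{E}$ with $M$ having entries like $(hd_j^{-1})^s+K^{-s}$ times geometrically decaying off-diagonal factors, and choose $K$ large so that $M$ has small $\ell^\infty$ norm. The innermost ball $A_*$ is handled by the global estimate: $|e|_{H^s}\lesssim\|\delta_h\|_{H^{-s}}$-type bounds give $|e|\lesssim h^{-d/2+s}$, which is consistent with the weighting.

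Once closed, each annulus contributes $O(1)$ to the $L^1$ sum, and there are $J\simeq\ell_h$ annuli, giving the factor $\ell_h$. At $s=1/2$ the $\varepsilon$ loss in the shift theorem must be taken as $\varepsilon\simeq\ell_h^{-1}$. The constants then blow up polynomially, and tracking the powers $(d+2s)/(2s)$ coming from the kernel decay through the duality produces the extra factor $\ell_h^{(d+2s)/(2s)}$, hence $\alpha=d+2$. The case $(s,d)=(1/2,1)$ would be treated separately, following Appendix~\ref{app:critical-case}.
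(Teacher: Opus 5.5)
Your overall architecture matches the paper's: an element-source dual problem, reduction to an $L^1$ bound for $(-\Delta)^s$ of the discrete dual solution, a dyadic decomposition at $x_0$, coupled annular energy and localized-duality $L^2$ estimates, and absorption by enlarging the core. The quantitative part, however, is wrong as written, and two ingredients are missing.

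\textbf{Scaling and the inward coupling.} In your normalization ($\|\delta_h^{x_0}\|_{L^2}\simeq h^{-d/2}$), Lemma~\ref{lem:local-regularity-core} gives $|g|_{H^{2s-\varepsilon}(A_j)}\lesssim\varepsilon^{-\xi}d_j^{-d/2}$, not $d_j^{-d/2-s}$.
\begin{itemize}
\item The local part of the $L^1$ bound must go through an inverse estimate at the mesh scale,
$|g_h|_{H^{2s}(A_j')}\lesssim h^{-s}|g-g_h|_{H^s(A_j'')}+h^{-\varepsilon}|g|_{H^{2s-\varepsilon}(A_j^{(3)})}$. So the factor is $h^{-s}E_j$, not $d_j^{-s}E_j$.
\item It is also better to bound $(-\Delta)^s g_h$ directly, since at $s=1/2$ the error $e$ is not in $H^1$ near $\partial\Omega$.
\item With the exponents you state, annulus $j$ contributes $d_j^{-s}$, or $h^s d_j^{-2s}$ with your displayed formula. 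The sum is then of order $(Kh)^{-s}$, not $O(\ell_h)$.
\end{itemize}
What you need is $E_j\lesssim h^s d_j^{-d/2}\ell_h^{\alpha-1}$ (Lemma~\ref{lem:annular-energy-error}). Closing the system for that requires the sharp inward factor $\kappa_{jk}=(d_k/d_j)^{d/2-s}$ for $k\geq j+4$ in the localized duality. Bounding $|w|_{H^{2s-\varepsilon}(A_k)}$ only through the size of $w$, via $G_\Omega\lesssim|x-y|^{2s-d}$, gives $(d_k/d_j)^{d/2-2s}$, and then the system does not close. The sharp factor comes from two sources:
\begin{itemize}
\item the boundary decay of $G_\Omega$ when $\delta(x_0)\lesssim d_k$;
\item otherwise, the fact that $w$ is nearly constant on inner annuli (Lemma~\ref{lem:gradient-estimate}).
\end{itemize}
Even with this factor, the inward couplings do not decay geometrically. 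Normalizing by $\max_k d_k^{d/2-s}L_k$, the $k$-th inward entry is about $(h/d_k)^s\varepsilon^{-\xi}$, which grows toward the core. The row sums are $(h/H)^s\varepsilon^{-\xi}$, dominated by the innermost annuli, and this is the quantity that must be small.

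\textbf{The case $s=1/2$.} Here your closing step fails as stated. With $\varepsilon\simeq\ell_h^{-1}$, every localized duality bound carries a factor $\varepsilon^{-1}\simeq\ell_h$. The contraction factor is therefore $CK^{-1/2}\ell_h$, which no fixed $K$ makes small. The missing idea is an $h$-dependent terminal scale $H=C_*h\ell_h^{\xi/s}$, i.e.\ $H=C_*h\ell_h^2$ at $s=1/2$, so that $C(h/H)^s\ell_h^\xi=CC_*^{-s}\leq 1/2$. The extra logarithms then come from this enlarged core, not from ``kernel decay through the duality'':
\begin{itemize}
\item Using the Aubin--Nitsche bound $\|e\|_{L^2(\Omega)}\lesssim h^{2s-d/2}\ell_h^{2\xi}$, the core tails add $(H/h)^{d/2-s}\ell_h^{2\xi}=C_*^{d/2-s}\ell_h^{d\xi/(2s)+\xi}$ to each normalized annular energy error $h^{-s}d_j^{d/2}E_j$.
\item The core term itself contributes $(H/h)^{d/2}\ell_h^{\xi}$.
\end{itemize}
Summing over the $J\simeq\ell_h$ annuli gives $\alpha=1+\xi+\frac{d\xi}{2s}=1+\frac{d+2s}{2s}\xi$.
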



\subsection{Auxiliary estimates for fractional Sobolev norms}
The following standard estimates control cutoff multiplication,
localization of fractional seminorms, and boundary weights arising from
exterior interactions. For completeness, we give the cutoff proof in detail
and sketch the other two proofs. For cutoff and localization arguments, see
\cite[Lemmas~5.1 and~5.3]{DiNezzaPalatucciValdinoci2012} and
\cite[Section~3.1]{BorthagarayLeykekhmanNochetto2021}.
\begin{lemma}[cutoff estimate]\label{lem:frac-cutoff}
    Let $\eta\in C^2(\RR^d)$ satisfy $0\leq\eta\leq1$ and $|\nabla^i\eta|\leq Cr^{-i}$ for some $r>0$ and $i=1,2$. Let $D$ be an open set with $\diam D\leq Cr$ and $\dist(\supp\eta,D^c)\geq r/C$. Extending $\eta v$ by zero outside $D$, we have, for $0\leq s\leq1$ and $v\in H^s(D)$,
    \begin{equation}\label{eqn:cutoff-lower-order}
        |\eta v|_{H^s(\RR^d)}\leq \sqrt{2}|v|_{H^s(D)} + C_{d}\, r^{-s}\|v\|_{L^2(D)},
    \end{equation}
    and for every $1\leq t\leq2$ and $v\in H^t(D)$,
    \begin{equation}\label{eqn:cutoff-higher-order}
        |\eta v|_{H^t(\RR^d)}
        \leq C_d\bigl(|v|_{H^t(D)}
        +r^{1-t}\|\nabla v\|_{L^2(D)}
        +r^{-t}\|v\|_{L^2(D)}\bigr),
    \end{equation}
    where $C_d$ depends only on $d$ and $C$, uniformly in $s$ and $t$.
\end{lemma}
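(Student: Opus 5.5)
We split the double integral defining $|\eta v|_{H^s(\RR^d)}^2$ into the part over $D\times D$ and twice the part over $D\times D^c$; the latter only sees $\eta v$ on $D$. For the $D\times D$ part we use the splitting
\[
 \eta(x)v(x)-\eta(y)v(y)=\eta(x)\bigl(v(x)-v(y)\bigr)+v(y)\bigl(\eta(x)-\eta(y)\bigr).
\]
Together with $(a+b)^2\le 2a^2+2b^2$ and $0\le\eta\le1$, this bounds the $D\times D$ contribution by $2|v|_{H^s(D)}^2$ plus
\[
 c_{d,s}\int_D|v(y)|^2\int_D\frac{|\eta(x)-\eta(y)|^2}{|x-y|^{d+2s}}\,dx\,dy.
\]
For the inner integral we use $|\eta(x)-\eta(y)|\le\min(1,Cr^{-1}|x-y|)$ and split at $|x-y|=r$. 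This gives $C\bigl(r^{-2}\int_0^r\rho^{1-2s}d\rho+\int_r^\infty\rho^{-1-2s}d\rho\bigr)\lesssim r^{-2s}\bigl(\tfrac1{2-2s}+\tfrac1{2s}\bigr)$. The factor $c_{d,s}\simeq s(1-s)$ cancels the blow-up at $s\to0,1$, which is exactly why the normalization yields a constant uniform in $s$. For the cross term $D\times D^c$ we have $|\eta v(x)-0|^2$ with $x\in\supp\eta$, so $|x-y|\ge r/C$. The integral $\int_{|x-y|\ge r/C}|x-y|^{-d-2s}dy\lesssim r^{-2s}/s$ is again compensated by $c_{d,s}$. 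Taking square roots gives \eqref{eqn:cutoff-lower-order}, with $\sqrt2$ in front of $|v|_{H^s(D)}$. The endpoint cases $s=0,1$ are handled directly, or by the limiting behavior of the normalized seminorm.

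For \eqref{eqn:cutoff-higher-order}, write $t=1+\sigma$ with $0\le\sigma\le1$. Then
\[
 \partial_j(\eta v)=\eta\,\partial_jv+v\,\partial_j\eta.
\]
For $\sigma\in(0,1)$, apply \eqref{eqn:cutoff-lower-order} to $\eta\partial_jv$, giving $|\partial_jv|_{H^\sigma(D)}+r^{-\sigma}\|\nabla v\|_{L^2(D)}$. For $v\,\partial_j\eta$, apply the same argument with the cutoff $r\partial_j\eta$, which is bounded, has gradient $\lesssim r^{-1}$, and has the same support. This requires only the bound $|\nabla\psi|\le Cr^{-1}$, not $\psi\in[0,1]$, so the first step is run with $\sup|\psi|$ in place of $1$. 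It gives $r^{-1}\bigl(|v|_{H^\sigma(D)}+r^{-\sigma}\|v\|_{L^2(D)}\bigr)$. For the case $\sigma=0$ we instead use the $L^2$ identity directly. For $\sigma=1$ we differentiate twice and use $|\nabla^2\eta|\le Cr^{-2}$.

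What remains is the term $r^{-1}|v|_{H^\sigma(D)}$, which must be converted into $\|\nabla v\|$ and $\|v\|$ terms; this is the main obstacle. Interpolation gives $|v|_{H^\sigma(D)}\lesssim\|v\|^{1-\sigma}\|\nabla v\|^{\sigma}$ plus lower-order terms. Young's inequality with weights $r^{\pm}$ then gives $r^{-1}|v|_{H^\sigma(D)}\lesssim r^{-\sigma}\|\nabla v\|_{L^2(D)}+r^{-1-\sigma}\|v\|_{L^2(D)}$. We avoid an extension operator on $D$, which need not be Lipschitz. Instead, since $r\partial_j\eta$ vanishes near $D^c$, in the seminorm of $v\,\partial_j\eta$ we bound differences directly. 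For $|x-y|<r/(2C)$ with one point in $\supp\eta$, the segment between them lies in $D$, so $|v(x)-v(y)|\le\int_0^1|\nabla v(y+\tau(x-y))||x-y|\,d\tau$. Integrating, the short range is controlled by $r^{2-2\sigma}\|\nabla v\|^2/(1-\sigma)$ and the long range by $r^{-2\sigma}\|v\|^2/\sigma$, and the prefactor $c_{d,\sigma}$ again absorbs both denominators. Since $\diam D\le Cr$, all constants depend only on $d$ and $C$. With $t=1+\sigma$ this yields \eqref{eqn:cutoff-higher-order}.
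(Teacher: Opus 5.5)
Your proposal is correct and follows essentially the paper's proof. For \eqref{eqn:cutoff-lower-order} you use the same $D\times D$ / $D\times D^c$ splitting, the same product identity, and the $c_{d,s}\simeq s(1-s)$ compensation. For \eqref{eqn:cutoff-higher-order} you apply the lower-order bound to $\eta\,\partial_jv$ and split the $v\,\partial_j\eta$ term into short and long range at scale $r$, using support separation. That split is the paper's translation bound $\|f(\cdot+z)-f\|_{L^2}\leq\min\{|z|\|\nabla f\|_{L^2},2\|f\|_{L^2}\}$ applied to the zero extension $f_j\in H^1(\RR^d)$.

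The interpolation inequality $|v|_{H^\sigma(D)}\lesssim\|v\|^{1-\sigma}\|\nabla v\|^{\sigma}$ you mention first does not hold on a general open set $D$, so you are right to discard it; the direct difference argument alone suffices.
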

\begin{proof}
For $0<s<1$, the zero extension of $\eta v$ satisfies
\[
 |\eta v|_{H^s(\RR^d)}^2
 =|\eta v|_{H^s(D)}^2
 +c_{d,s}\int_D|\eta(x)v(x)|^2
       \int_{D^c}\frac{\mathrm{d}y}{|x-y|^{d+2s}}\,\mathrm{d}x.
\]
For the first term, write
$\eta(x)v(x)-\eta(y)v(y)
=\eta(x)(v(x)-v(y))+v(y)(\eta(x)-\eta(y))$.
Using $|\eta|\leq1$, $|\eta(x)-\eta(y)|\leq Cr^{-1}|x-y|$,
and $\diam D\leq Cr$, we obtain
\[
 \begin{aligned}
 |\eta v|_{H^s(D)}^2
 &\leq 2|v|_{H^s(D)}^2
   +Cc_{d,s}r^{-2}\int_D|v(y)|^2
       \int_{|z|\leq Cr}|z|^{2-d-2s}\,\mathrm{d}z\mathrm{d}y\\
 &\leq 2|v|_{H^s(D)}^2
   +\frac{C_dc_{d,s}}{1-s}r^{-2s}\|v\|_{L^2(D)}^2.
 \end{aligned}
\]
For the exterior term, the separation of $\supp\eta$ from $D^c$ gives,
for $x\in\supp\eta$,
\[
 \int_{D^c}\frac{\mathrm{d}y}{|x-y|^{d+2s}}
 \leq\int_{|z|\geq r/C}\frac{\mathrm{d}z}{|z|^{d+2s}}
 \leq\frac{C_d}{s}r^{-2s}.
\]
Combining these bounds and using $c_{d,s}\simeq_d s(1-s)$ yields
\[
 |\eta v|_{H^s(\RR^d)}^2
 \leq 2|v|_{H^s(D)}^2+C_dr^{-2s}\|v\|_{L^2(D)}^2.
\]
Taking square roots proves \eqref{eqn:cutoff-lower-order} uniformly in $s$.
The cases $s=0,1$ and $t=1,2$ follow from $|\eta|\leq1$ and the product rule.

For $1<t<2$, set $\sigma:=t-1$. For $f\in H^1(\RR^d)$, the translation
bound $\|f(\cdot+z)-f\|_{L^2}\leq
\min\{|z|\|\nabla f\|_{L^2},2\|f\|_{L^2}\}$ and a split at $|z|=r$ give
\[
 \begin{aligned}
 |f|_{H^\sigma(\RR^d)}^2
 &\leq C_dc_{d,\sigma}\left(
      \frac{r^{2-2\sigma}}{1-\sigma}\|\nabla f\|_{L^2(\RR^d)}^2
      +\frac{r^{-2\sigma}}{\sigma}\|f\|_{L^2(\RR^d)}^2\right)\\
 &\leq C_d\left(r^{2-2\sigma}\|\nabla f\|_{L^2(\RR^d)}^2
      +r^{-2\sigma}\|f\|_{L^2(\RR^d)}^2\right),
 \end{aligned}
\]
uniformly in $\sigma\in(0,1)$. For $1\leq j\leq d$, let $f_j$ be the
zero extension of $v\partial_j\eta$. The support separation ensures
$f_j\in H^1(\RR^d)$, and the derivative bounds on $\eta$ yield
\[
 \begin{aligned}
 |f_j|_{H^\sigma(\RR^d)}
 &\leq C_d\bigl(r^{1-\sigma}\|\nabla f_j\|_{L^2(\RR^d)}
              +r^{-\sigma}\|f_j\|_{L^2(\RR^d)}\bigr)\\
 &\leq C_d\bigl(r^{-\sigma}\|\nabla v\|_{L^2(D)}
              +r^{-1-\sigma}\|v\|_{L^2(D)}\bigr).
 \end{aligned}
\]
On the other hand, \eqref{eqn:cutoff-lower-order} applied to $\partial_jv$
gives
\[
 |\eta\partial_jv|_{H^\sigma(\RR^d)}
 \leq\sqrt2|\partial_jv|_{H^\sigma(D)}
       +C_dr^{-\sigma}\|\partial_jv\|_{L^2(D)}.
\]
Using $\partial_j(\eta v)=\eta\partial_jv+f_j$ and summing over $j$
proves \eqref{eqn:cutoff-higher-order}.
\end{proof}

\begin{lemma}[localization]\label{lem:frac-localize}
    Let $D\subset\RR^d$ be an open set with a finite covering $D\subset\bigcup_{i=1}^N B_r(x_i)$, where $r>0$. Then, for $0<s<1$ and $v\in H^s(D)$,
    \begin{equation}
        |v|^2_{H^s(D)}\leq \sum_{i=1}^N |v|^2_{H^s(D\cap B_{2r}(x_i))} + C_d\,r^{-2s}\|v\|^2_{L^2(D)},
    \end{equation}
    where $C_d$ depends only on $d$.
\end{lemma}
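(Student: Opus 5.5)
The plan is to split the double integral defining $|v|^2_{H^s(D)}$ according to the distance $|x-y|$, treating near pairs by the covering and far pairs by a crude kernel bound. For each $x\in D$ fix an index $i(x)$ with $x\in B_r(x_{i(x)})$; this partitions $D$ into disjoint measurable pieces $D_i:=\{x\in D: i(x)=i\}\subset B_r(x_i)$. Then
\[
 |v|^2_{H^s(D)}=\frac{c_{d,s}}2\sum_{i=1}^N\int_{D_i}\int_D\frac{|v(x)-v(y)|^2}{|x-y|^{d+2s}}\,\mathrm{d}y\,\mathrm{d}x,
\]
and for each $i$ we split the inner integral into $\{y\in D:|x-y|<r\}$ and $\{y\in D:|x-y|\geq r\}$.

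For the near part, $x\in D_i\subset B_r(x_i)$ and $|x-y|<r$ force $y\in B_{2r}(x_i)$. Hence both $x$ and $y$ lie in $D\cap B_{2r}(x_i)$, and the near contribution of the $i$-th piece is bounded by
\[
 \frac{c_{d,s}}2\int_{D_i}\int_{D\cap B_{2r}(x_i)}\frac{|v(x)-v(y)|^2}{|x-y|^{d+2s}}\,\mathrm{d}y\,\mathrm{d}x\leq|v|^2_{H^s(D\cap B_{2r}(x_i))}.
\]
The key point is that the outer integration is over the disjoint sets $D_i$, so summing over $i$ yields exactly $\sum_i|v|^2_{H^s(D\cap B_{2r}(x_i))}$ with constant one and no dependence on the overlap of the covering. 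This is the one place where care is needed: integrating over $B_r(x_i)$ instead of $D_i$ would introduce the covering multiplicity.

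For the far part, use $|v(x)-v(y)|^2\leq2|v(x)|^2+2|v(y)|^2$ and symmetry of the kernel:
\[
 \frac{c_{d,s}}2\int_D\int_{D,\ |x-y|\geq r}\frac{|v(x)-v(y)|^2}{|x-y|^{d+2s}}\,\mathrm{d}y\,\mathrm{d}x\leq 2c_{d,s}\int_D|v(x)|^2\int_{|z|\geq r}\frac{\mathrm{d}z}{|z|^{d+2s}}\,\mathrm{d}x.
\]
The inner integral equals $\frac{|S^{d-1}|}{2s}r^{-2s}$. Since $c_{d,s}\simeq_d s(1-s)$, as already used in the proof of Lemma~\ref{lem:frac-cutoff}, the far part is at most $C_d\,r^{-2s}\|v\|^2_{L^2(D)}$, with $C_d$ uniform in $s\in(0,1)$.

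Adding the near and far bounds gives the claimed inequality. No real obstacle is expected beyond choosing the disjoint partition $\{D_i\}$, which keeps the constant in front of the local seminorms equal to one; everything else is routine.
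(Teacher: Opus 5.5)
Your proof is correct and follows the paper's sketch. Both split the double integral at $|x-y|=r$, bound the near field by the local seminorms via the covering, and bound the far field by radial integration together with $c_{d,s}\simeq_d s(1-s)$. Your disjoint partition $\{D_i\}$ (taking the minimal index, which also makes the pieces measurable) is the step the paper's sketch leaves implicit, and it is what keeps the constant in front of $\sum_i|v|^2_{H^s(D\cap B_{2r}(x_i))}$ equal to one.
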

\begin{proof}[Sketch of proof]
Split the defining double integral at $|x-y|=r$. If $x\in B_r(x_i)$
and $|x-y|\leq r$, then $x,y\in B_{2r}(x_i)$, so the near-field
contribution is bounded by the sum on the right-hand side.
By symmetry and radial integration, the far-field contribution is at most
$C_dc_{d,s}s^{-1}r^{-2s}\|v\|_{L^2(D)}^2
\leq C_dr^{-2s}\|v\|_{L^2(D)}^2$, since $c_{d,s}\simeq_d s(1-s)$.
Adding the two bounds proves the assertion.
\end{proof}

\begin{lemma}[exterior kernel bound]\label{lem:delta-upper-bound}
Let $\Omega\subset\RR^d$ be a bounded domain satisfying the exterior measure-density condition \eqref{eqn:exterior-density}, and set $\delta(x):=\dist(x,\partial\Omega)$ for $x\in\Omega$. For every $s>0$, there exists $C=C(d,s,c_\Omega,r_\Omega,\diam\Omega)>0$ such that
\begin{equation}\label{eqn:delta-upper-bound}
 C^{-1}\delta(x)^{-2s}\leq\int_{\Omega^c}
\frac{\mathrm{d}y}{|x-y|^{d+2s}}
\leq C\delta(x)^{-2s},
\qquad x\in\Omega.
\end{equation}
\end{lemma}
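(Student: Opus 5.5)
The plan is to prove the two inequalities in \eqref{eqn:delta-upper-bound} separately, writing $\delta:=\delta(x)$ for a fixed $x\in\Omega$. The upper bound is purely geometric. Since $x\in\Omega$ is open and $\delta(x)=\dist(x,\partial\Omega)$, the ball $B_\delta(x)$ lies in $\Omega$. Hence $\Omega^c\subset\{y:|x-y|\geq\delta\}$, and radial integration gives
\[
 \int_{\Omega^c}\frac{\mathrm{d}y}{|x-y|^{d+2s}}
 \leq\int_{|z|\geq\delta}\frac{\mathrm{d}z}{|z|^{d+2s}}
 =\frac{|\mathbb S^{d-1}|}{2s}\,\delta^{-2s}.
\]
This requires only $s>0$ and no assumption on $\Omega$ beyond openness.

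For the lower bound I would split into two cases according to the size of $\delta$ relative to $r_\Omega$.

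\emph{Case $\delta\leq r_\Omega/2$.} Choose $x_0\in\partial\Omega$ with $|x-x_0|=\delta$. Then $B_\delta(x_0)\subset B_{2\delta}(x)$, and for $y$ in this ball we have $|x-y|\leq 2\delta$. Since $\delta\leq r_\Omega$, the exterior measure-density condition \eqref{eqn:exterior-density} gives $|B_\delta(x_0)\cap\Omega^c|\geq c_\Omega\delta^d$. Therefore
\[
 \int_{\Omega^c}\frac{\mathrm{d}y}{|x-y|^{d+2s}}
 \geq (2\delta)^{-d-2s}\,|B_\delta(x_0)\cap\Omega^c|
 \geq c_\Omega 2^{-d-2s}\delta^{-2s}.
\]

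\emph{Case $\delta>r_\Omega/2$.} Here I use the bound $\delta\leq\diam\Omega$. The annulus $A:=\{y:\diam\Omega<|x-y|<2\diam\Omega\}$ lies entirely in $\Omega^c$, because every point of $\Omega$ is within $\diam\Omega$ of $x$. The annulus has measure $C_d(\diam\Omega)^d$, so the integral is at least $C_d(\diam\Omega)^{-2s}2^{-d-2s}$. Since $\delta^{-2s}\leq(r_\Omega/2)^{-2s}$, this quantity is bounded below by $C\delta^{-2s}$, with $C$ depending on $d,s,r_\Omega,\diam\Omega$.

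The main subtlety is only bookkeeping. The measure-density condition is stated at boundary points and at scales $r\leq r_\Omega$, which is why the small-$\delta$ case must transfer it from $x$ to a nearest boundary point $x_0$. Large $\delta$ is then handled separately by boundedness of $\Omega$. Combining the two cases yields the lower bound with $C=C(d,s,c_\Omega,r_\Omega,\diam\Omega)$.
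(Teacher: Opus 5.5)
Your proof is correct and follows the paper's argument: the upper bound by radial integration over $B_\delta(x)^c\supset\Omega^c$, and the lower bound via \eqref{eqn:exterior-density} at a nearest boundary point with $|x-y|\leq2\delta$, are exactly what the paper does. The only difference is that the paper avoids your case split by applying the density condition at radius $r_x:=\min\{\delta,r_\Omega\}$ and using $\delta\leq\diam\Omega$ to get $r_x/\delta\geq\min\{1,r_\Omega/\diam\Omega\}$. Your annulus $\{\diam\Omega<|x-y|<2\diam\Omega\}\subset\Omega^c$ instead handles $\delta>r_\Omega/2$ using only boundedness; in that case the fact you actually use is $\delta>r_\Omega/2$, not $\delta\leq\diam\Omega$.
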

\begin{proof}[Sketch of proof]
Fix $x\in\Omega$, set $\delta:=\delta(x)$, and choose
$z_x\in\partial\Omega$ with $|x-z_x|=\delta$.
Let $r_x:=\min\{\delta,r_\Omega\}$. For $y\in B_{r_x}(z_x)$,
we have $|x-y|\leq2\delta$, so \eqref{eqn:exterior-density} gives
\[
 \int_{\Omega^c}\frac{\mathrm{d}y}{|x-y|^{d+2s}}
 \geq(2\delta)^{-d-2s}|B_{r_x}(z_x)\cap\Omega^c|
 \geq c_\Omega2^{-d-2s}
       \left(\frac{r_x}{\delta}\right)^d\delta^{-2s}.
\]
Since $r_x/\delta\geq\min\{1,r_\Omega/\diam\Omega\}>0$,
the lower bound follows. The upper bound follows by radial integration
over $B_\delta(x)^c\supset\Omega^c$.
\end{proof}

\section{Local regularity for localized sources}\label{sec:regularity}
We establish local $L^2$ and $H^{2s-\eps}$ estimates for solutions with sources
supported in an annulus or a mesh element. These estimates will be used in the
interpolation and duality arguments below.
The parameter $0\leq\eps<s$ accounts for the loss of regularity;
we may take $\eps=0$ for $s<1/2$, whereas $\eps>0$ is required for $s=1/2$.

\subsection{Dyadic decomposition and local estimates}
For \(x_0\in\overline\Omega\), set \(\delta(x_0):=\dist(x_0,\partial\Omega)\).
For \(i\in\mathbb{Z}\), define
\begin{equation}\label{eqn:annuli-0}
 r_i:=2^{-i},
 \quad
 A_i=A_i(x_0):=B_{r_i}(x_0)\setminus B_{r_{i+1}}(x_0),
 \quad \Omega_i=\Omega_i(x_0):=\Omega\cap A_i.
\end{equation}
For a terminal index \(J\in\NN_+\), set
\[
    A_\ast=A_\ast(x_0):=B_{r_J}(x_0),\quad \Omega_\ast=\Omega_\ast(x_0):=\Omega\cap A_\ast.
\]
Since $\diam \Omega\leq1$, we have $\Omega\subset B_1(x_0)$ and the disjoint decompositions
\[
    B_1(x_0)=A_\ast\cup\bigcup_{i=0}^{J-1}A_i,
    \qquad \Omega=\Omega_\ast\cup\bigcup_{i=0}^{J-1}\Omega_i.
\]
For $j\in\mathbb{Z}$ and $p\in\NN_+$, define the enlarged regions by
\[
    A_j^{(p)}:=\bigcup_{i=j-p}^{j+p}A_i,
    \qquad \Omega_j^{(p)}:=\Omega\cap A_j^{(p)},
\]
\[
    A_\ast^{(p)}:=A_\ast\cup\bigcup_{i=J-p}^{J-1}A_i=B_{r_{J-p}}(x_0),
    \qquad \Omega_\ast^{(p)}:=\Omega\cap A_\ast^{(p)}.
\]
The indices in these unions are not truncated: $\Omega_i=\varnothing$ for $i<0$, while $A_i\subset A_\ast$ for $i\geq J$. We write $A_j':=A_j^{(1)}$ and $A_j'':=A_j^{(2)}$, and use the same convention for $\Omega_j$, $A_\ast$ and $\Omega_\ast$.

To estimate local $L^2$ errors by duality in Section~\ref{sec:annular-error},
we first consider sources supported in an annulus. For $x_0\in\overline\Omega$,
$J\in\NN_+$, and $0\leq j<J$, let $v_j\in L^2(\Omega)$ satisfy
$\supp v_j\subset\overline{\Omega_j(x_0)}$, and let
$w_j\in\widetilde H^s(\Omega)$ solve
\begin{equation}\label{eqn:local-source-annuli}
 \begin{cases}
  (-\Delta)^s w_j=v_j &\text{in }\Omega,\\
  w_j=0 &\text{in }\Omega^c.
 \end{cases}
\end{equation}

\begin{lemma}[annular sources: $L^2$ estimates]\label{lem:local-L2}
Let $0<s\leq1/2$ and let $w_j$ solve \eqref{eqn:local-source-annuli}.
For $0\leq k<J$, the following estimates hold. If $2s<d$, then
\begin{align}
 \|w_j\|_{L^2(\Omega_k)}
 &\leq C(r_jr_k)^{d/2}r_{j\wedge k}^{2s-d}\|v_j\|_{L^2(\Omega_j)},
 \label{eq:L2-comp}\\
 \|w_j\|_{L^2(\Omega_\ast)}
 &\leq C r_J^{d/2}r_j^{2s-d/2}\|v_j\|_{L^2(\Omega_j)}.
 \label{eq:L2-in-core}
\end{align}
If $(s,d)=(1/2,1)$, then
\begin{align}
 \|w_j\|_{L^2(\Omega_k)}
 &\leq C\sqrt{r_k r_j}\bigl(1+|\log r_{j\wedge k}|\bigr)
 \|v_j\|_{L^2(\Omega_j)},\label{eq:L2-comp-critical}\\
 \|w_j\|_{L^2(\Omega_\ast)}
 &\leq C\sqrt{r_J r_j}\bigl(1+|\log r_j|\bigr)
 \|v_j\|_{L^2(\Omega_j)}.\label{eq:L2-in-core-critical}
\end{align}
Here $C=C(d,s,\Omega)$ is independent of $x_0$, $j$, $k$, $J$, and $v_j$.
\end{lemma}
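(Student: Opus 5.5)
We will represent $w_j$ through the Green's function $G$ of $(-\Delta)^s$ on $\Omega$ with zero exterior condition,
\[
 w_j(x)=\int_{\Omega_j}G(x,y)v_j(y)\,\mathrm{d}y ,
\]
and use only the pointwise upper bound $0\leq G(x,y)\leq C|x-y|^{2s-d}$ when $2s<d$. This bound follows from comparison with the whole-space Riesz kernel via the maximum principle. In the critical case $(s,d)=(1/2,1)$ we will use $G(x,y)\leq C\bigl(1+|\log|x-y||\bigr)$, which follows since $\diam\Omega\leq1$ and the Green function of $(-\Delta)^{1/2}$ on a bounded interval is bounded by a logarithm. No boundary decay of $G$ is needed; the estimates are interior-type size bounds. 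With the kernel bound in hand, each estimate reduces to bounding an integral operator with kernel $|x-y|^{2s-d}$ from $L^2(\Omega_j)$ to $L^2(\Omega_k)$ or $L^2(\Omega_\ast)$.

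\emph{Separated annuli, $|j-k|\geq2$.} For $x\in\Omega_k$ and $y\in\Omega_j$ we have $|x-y|\simeq r_{j\wedge k}$, the larger radius. Hence $|w_j(x)|\leq Cr_{j\wedge k}^{2s-d}\|v_j\|_{L^1(\Omega_j)}\leq Cr_{j\wedge k}^{2s-d}r_j^{d/2}\|v_j\|_{L^2(\Omega_j)}$. Taking the $L^2(\Omega_k)$ norm contributes $|\Omega_k|^{1/2}\leq Cr_k^{d/2}$, which gives \eqref{eq:L2-comp}. The same argument with $x\in\Omega_\ast$, where $|x-y|\simeq r_j$ because $j<J$ (the case $j\geq J-1$ is treated as near), gives \eqref{eq:L2-in-core}, since $|\Omega_\ast|^{1/2}\leq Cr_J^{d/2}$.

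\emph{Near annuli, $|j-k|\leq1$.} Here $r_j\simeq r_k$, and both $x$ and $y$ lie in $B_{2r_j}(x_0)$. By Young's inequality, the operator with kernel $|x-y|^{2s-d}$ restricted to $|x-y|\leq 4r_j$ has $L^2\to L^2$ norm at most $\int_{|z|\leq4r_j}|z|^{2s-d}\,\mathrm{d}z\leq Cr_j^{2s}$. This equals $C(r_jr_k)^{d/2}r_{j\wedge k}^{2s-d}$ up to constants, as required. For the core estimate with $j$ close to $J$, Cauchy--Schwarz gives the pointwise bound $|w_j(x)|\leq C\|v_j\|_{L^2}\bigl(\int_{B_{4r_j}}|x-y|^{2(2s-d)}\mathrm{d}y\bigr)^{1/2}$. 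This integral converges only when $4s>d$, so for $4s\leq d$ we instead apply the Young bound directly, which yields $Cr_j^{2s}\|v_j\|$. That agrees with $r_J^{d/2}r_j^{2s-d/2}$ because $r_J\simeq r_j$ in this near case. The critical case follows the same scheme with the logarithmic kernel. In the separated case the kernel is $\leq C(1+|\log r_{j\wedge k}|)$ and the two measure factors give $\sqrt{r_kr_j}$. In the near case the $L^1$ norm of the logarithm over $B_{4r_j}$ is $\leq Cr_j(1+|\log r_j|)$, and Young's inequality again closes the estimate.

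The main obstacle is justifying the Green's function upper bounds uniformly in $x_0$ under only the Lipschitz and exterior ball assumptions, particularly the logarithmic bound in the critical case $(s,d)=(1/2,1)$. This is presumably why the paper treats that case in Appendix~\ref{app:critical-case}. For $2s<d$ we will argue as follows. Set $\Phi(\cdot-y)=c|\cdot-y|^{2s-d}$, the fundamental solution. Then $\Phi(\cdot-y)-G(\cdot,y)$ is $s$-harmonic in $\Omega$ and nonnegative outside $\Omega$, so the maximum principle gives $G\leq\Phi$. For $d=1$, $s=1/2$, we will instead compare $G$ with the explicit Green function of a bounded interval containing $\Omega$ (of length at most $1$); domain monotonicity of Green functions then gives the logarithmic bound.
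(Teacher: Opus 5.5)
Your proposal is correct and follows essentially the paper's argument: a Green's function size bound, the pointwise-times-$L^1$ estimate on separated annuli, and Young's inequality on neighboring annuli, with the logarithmic kernel when $(s,d)=(1/2,1)$. The differences are minor. The paper cites the Chen--Kim--Song bound and drops its boundary factors, whereas you derive the upper bound by comparison with the Riesz kernel or with an interval's Green function. The paper also obtains the core estimate by squaring and summing \eqref{eq:L2-comp} over $k\geq J$ rather than treating $\Omega_\ast$ directly, and your Cauchy--Schwarz detour there is unnecessary because the Young bound covers every case.
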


\begin{lemma}[annular sources: Sobolev estimates]\label{lem:local-regularity}
Let $0<s\leq1/2$, let $w_j$ solve \eqref{eqn:local-source-annuli}, and let
$0\leq\eps<s$, with $\eps>0$ if $s=1/2$. For $0\leq k\leq J$, define
\begin{equation}\label{eqn:kappa}
 \kappa_{jk}:=
 \begin{cases}
  (r_j/r_k)^{d/2},&k\leq j-4,\\[3pt]
  1,&|k-j|\leq3,\\[3pt]
  (r_k/r_j)^{d/2-s},&k\geq j+4,
 \end{cases}
\end{equation}
where $j$ is the source index and $k$ the observation index. Then, for $0\leq k<J$,
\begin{align}
 |w_j|_{H^{2s-\eps}(A_k)}
 &\leq C\eps^{-\xi}\kappa_{jk}\|v_j\|_{L^2(\Omega_j)},
 \label{eqn:local-regularity-annuli}\\
 |w_j|_{H^{2s-\eps}(A_\ast)}
 &\leq C\eps^{-\xi}\kappa_{jJ}\|v_j\|_{L^2(\Omega_j)}.
 \label{eqn:local-regularity-annuli-core}
\end{align}
Here $\xi$ is defined in \eqref{eqn:xi-alpha}, with $\eps^{-\xi}=1$ for $s<1/2$.
The constant $C=C(d,s,\Omega)$ is independent of $x_0$, $j$, $k$, $J$, $\eps$,
and $v_j$.
\end{lemma}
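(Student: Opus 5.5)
We split into the near-diagonal case $|k-j|\le3$ and the separated cases $k\le j-4$ (observation far outside the source) and $k\ge j+4$ (observation inside, near $x_0$, including the core $A_\ast$). Throughout, the global shift of Theorem~\ref{thm:sobolev-shift-lip}, $\|w\|_{H^{2s-\eps}(\RR^d)}\le C\eps^{-\xi}\|f\|_{L^2(\Omega)}$, is the regularity input. Green's function bounds $|G(x,y)|\le C|x-y|^{2s-d}$ and the $L^2$ bounds of Lemma~\ref{lem:local-L2} supply the decay.

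\textbf{Near-diagonal case.} Here $\kappa_{jk}=1$. The global shift applied with $f=v_j$ gives $|w_j|_{H^{2s-\eps}(A_k)}\le|w_j|_{H^{2s-\eps}(\RR^d)}\le C\eps^{-\xi}\|v_j\|_{L^2(\Omega_j)}$, and nothing more is needed.

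\textbf{Separated cases.} Fix $k$ and a smooth cutoff $\eta$ equal to $1$ on $A_k$, supported in $A_k'$, with $|\nabla^i\eta|\le Cr_k^{-i}$. Since $v_j=0$ on $A_k''$, the function $\eta w_j$ solves $(-\Delta)^s(\eta w_j)=g$ in $\Omega$ with
\[
 g=-(-\Delta)^s\bigl((1-\eta)w_j\bigr)+v_j\eta ,
\]
and $v_j\eta=0$. I would write $g=g_1+g_2$. The piece $g_1$ collects the commutator contributions from $A_k''\setminus A_k$. The piece $g_2$ is the nonlocal tail: for $x\in\supp\eta$,
\[
 \int_{|x-y|\gtrsim r_k}\frac{|w_j(y)|}{|x-y|^{d+2s}}\,\mathrm dy .
\]
\begin{itemize}
\item The commutator $g_1$ is bounded in $L^2$ by $Cr_k^{-2s}\|w_j\|_{L^2(\Omega_k'')}$ plus a local-energy term. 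The local energy is in turn controlled by $r_k^{-s}\|w_j\|_{L^2}$ via a Caccioppoli-type inequality: test the equation with $\eta^2w_j$ and use Lemma~\ref{lem:frac-cutoff}.
\item The tail $g_2$ is estimated by Cauchy--Schwarz over dyadic shells around $A_k$. On each shell $\Omega_m$ one inserts $\|w_j\|_{L^2(\Omega_m)}$ from \eqref{eq:L2-comp} and sums the resulting geometric series.
\end{itemize}
Applying the global shift to $\eta w_j$ then yields
\[
 |w_j|_{H^{2s-\eps}(A_k)}\le C\eps^{-\xi}\|g\|_{L^2}.
\]
Plugging in Lemma~\ref{lem:local-L2}, the dominant term is $r_k^{-2s}\|w_j\|_{L^2(\Omega_k'')}$.
\begin{itemize}
\item For $k\le j-4$: $r_k^{-2s}(r_jr_k)^{d/2}r_k^{2s-d}=(r_j/r_k)^{d/2}$.
\item For $k\ge j+4$: $r_k^{-2s}(r_jr_k)^{d/2}r_j^{2s-d}=(r_k/r_j)^{d/2-s}\cdot(r_k/r_j)^{-s}$.
\end{itemize}
The second bound is too weak by $(r_k/r_j)^{-s}$. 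So for $k>j$ one must exploit that $w_j$ is $s$-harmonic in $B_{r_{j+1}}(x_0)$. Instead of subtracting nothing, I would subtract a constant (the mean of $w_j$ on $A_k''$ or on the ball) before cutting off, since seminorms ignore constants. The commutator then involves only the oscillation $w_j-\bar w$. That oscillation is controlled by an interior (or boundary, via the exterior ball condition and Green's function $G(x,y)\lesssim\min\{1,\delta(x)^s/|x-y|^s\}|x-y|^{2s-d}$) Hölder estimate at scale $r_k$ relative to $r_j$. This gains the factor $(r_k/r_j)^{s}$, which with the $r_j^{-d/2}$ normalization from $\|w_j\|_{L^\infty(B_{r_{j+1}})}\lesssim r_j^{2s-d/2}\|v_j\|$ gives exactly $(r_k/r_j)^{d/2-s}$.

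The core $A_\ast$ is handled identically with $k=J$, using the cutoff on $B_{r_{J-1}}$ and \eqref{eq:L2-in-core}. The case $(s,d)=(1/2,1)$ needs separate treatment: the logarithmic factors in \eqref{eq:L2-comp-critical} are absorbed by using the oscillation estimate in place of the $L^2$ bound.

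\textbf{Main obstacle.} The hardest step is this oscillation/Hölder decay estimate near $\partial\Omega$ in the regime $k\ge j+4$. The $L^2$ bounds alone lose the factor $(r_k/r_j)^s$. I would try to obtain the oscillation from the Green's function gradient/Hölder bound in $y$-independent form, combined with the barrier of \cite[Lemma~2.7]{RosOtonSerra2014} to get $|w_j(x)|\lesssim(\delta(x)\wedge r_k)^s r_j^{s-d/2}\|v_j\|$ near the boundary. In the region where the domain cuts the annulus, the vanishing exterior values themselves provide the subtracted constant $0$.
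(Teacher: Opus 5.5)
Your outline is correct, and apart from one subcase it is the paper's proof. The near-diagonal case uses the global shift. The outward regime uses Lemma~\ref{lem:local-sobolev} with the annular $L^2$ bounds and a geometric series. The inward regime splits according to whether $\delta(x_0)\le 8r_k$. Your boundary subcase (subtracted constant $0$, $\delta(x)^s$ decay of $G_\Omega$) is exactly the paper's Case~3-1. On the outer annuli $A_i$, $j+2\le i<k$, the available factor is $r_{k\wedge i}^s$, not $(\delta\wedge r_k)^s$, but the tail still sums to $(r_k/r_j)^{d/2-s}$.

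The genuinely different step is the interior subcase $\delta(x_0)>8r_k$. You stay in the commutator framework, subtract the mean $c$, and need $|w_j-c|\lesssim (r_i/r_j)^s r_j^{2s-d/2}\|v_j\|_{L^2(\Omega_j)}$ on every annulus between source and observation. Your exponent count is right, and $C^s$ decay is exactly enough. The price is bookkeeping:
\begin{itemize}
\item the tail now also runs over $\Omega^c$, where $w_j-c=-c$;
\item sup norms must be taken on $B_{r_{j+2}}(x_0)$, because for $d\ge4s$ the sup over $B_{r_{j+1}}(x_0)$ is not controlled by $r_j^{2s-d/2}\|v_j\|_{L^2}$;
\item in the mixed range $8r_k<\delta(x_0)<r_j$ you must splice an interior H\"older estimate with tail at scale $\delta(x_0)$ onto the boundary decay by hand.
\end{itemize}
The paper avoids all of this. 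It takes the positive majorant $W_j$ (source $|v_j|$) and $R_0=\frac12(\delta(x_0)\wedge r_j)$. The Green bound gives $W_j(x_0)\le CR_0^s r_j^{s-d/2}\|v_j\|_{L^2}$, and the Poisson-kernel estimate of Lemma~\ref{lem:gradient-estimate} gives $\|\nabla w_j\|_{L^\infty(B_{R_0/2})}\le CR_0^{-1}W_j(x_0)$. The Poisson formula absorbs the entire far field into the single number $W_j(x_0)$, and the choice of $R_0$ does the splicing automatically. Then $|w_j|_{H^t(A_k)}\le Cr_k^{d/2+1-t}\|\nabla w_j\|_{L^\infty}$ concludes with no commutator and even no factor $\eps^{-\xi}$.

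Your treatment of $(s,d)=(1/2,1)$ is the thin part, and it misplaces the difficulty. There $d/2-s=0$, so $\kappa_{jk}=1$ for $k\ge j-3$ and the inward regime is trivial. The logarithm in \eqref{eq:L2-comp-critical} matters in the outward regime $k\le j-4$. The paper removes it in two ways:
\begin{itemize}
\item when $\delta(x_0)\le 8r_k$, by the refined bound $G_\Omega\le C\log(1+\sqrt{\delta(x)\delta(y)}/|x-y|)$;
\item otherwise, by an oscillation bound for $G_\Omega(\cdot,y)$ on the \emph{outer} annulus with pole near $x_0$ (Lemma~\ref{lem:critical-green-oscillation}, built from $\Phi=-\pi^{-1}\log|z|$ and the maximum principle), followed by the same Lipschitz-to-seminorm step.
\end{itemize}
Your inward oscillation estimate is a different statement and does not supply this.

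Minor points. The product identity is $(-\Delta)^s(\eta w_j)=\eta v_j+c_{d,s}\mathcal C(w_j)$; your $g$ differs from it by $(1-\eta)v_j$. No Caccioppoli step is needed: for $s<1/2$, Young's inequality bounds the near-field commutator by $Cr_k^{-2s}\|w_j\|_{L^2(\Omega_k'')}$, and at $s=1/2$ the Riesz-transform bound does the same.
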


For the almost $L^\infty$ stability proof in Section~\ref{sec:linf-stability}, we use a
dual problem with a source supported in a single mesh element. For
$x_0\in T_0\in\mc T_h$ and $\varphi\in L^2(\Omega)$ with
$\supp\varphi\subset T_0$, let $z\in\widetilde H^s(\Omega)$ solve
\begin{equation}\label{eqn:local-source}
 \begin{cases}
  (-\Delta)^s z=\varphi &\text{in }\Omega,\\
  z=0 &\text{in }\Omega^c.
 \end{cases}
\end{equation}

\begin{lemma}[element sources: $L^2$ estimates]\label{lem:local-L2-core}
Let $0<s\leq1/2$ and let $z$ solve \eqref{eqn:local-source}.
Assume $r_J\geq2h$. For $0\leq k<J$, the following estimates hold.
If $2s<d$, then
\begin{equation}\label{eq:L2-out-comp-core}
 \|z\|_{L^2(\Omega_k)}
 \leq C r_k^{2s-d/2}h^{d/2}\|\varphi\|_{L^2(T_0)},
\end{equation}
\begin{equation}\label{eq:L2-core-core}
 \|z\|_{L^2(\Omega_\ast)}
 \leq C\|\varphi\|_{L^2(T_0)}
 \begin{cases}
  h^{2s},&d>4s,\\[3pt]
  h^{2s}\bigl(1+\log_2(r_J/h)\bigr)^{1/2},&d=4s,\\[3pt]
  h^{d/2}r_J^{2s-d/2},&2s<d<4s.
 \end{cases}
\end{equation}
If $(s,d)=(1/2,1)$, then
\begin{align}
 \|z\|_{L^2(\Omega_k)}
 &\leq C\sqrt{h r_k}\bigl(1+|\log r_k|\bigr)
 \|\varphi\|_{L^2(T_0)},\label{eq:L2-out-comp-core-critical}\\
 \|z\|_{L^2(\Omega_\ast)}
 &\leq C\sqrt{h r_J}\bigl(1+|\log r_J|\bigr)
 \|\varphi\|_{L^2(T_0)}.\label{eq:L2-core-core-critical}
\end{align}
Here $C=C(d,s,\Omega)$ is independent of $h$, $x_0$, $T_0$, $k$, $J$, and $\varphi$.
\end{lemma}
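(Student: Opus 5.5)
We prove Lemma~\ref{lem:local-L2-core} through a Green's function representation together with a duality/Cauchy--Schwarz argument. Let $G(x,y)$ be the Green's function of $(-\Delta)^s$ on $\Omega$ with zero exterior condition, so that
\[
 z(x)=\int_{T_0}G(x,y)\varphi(y)\,\mathrm{d}y .
\]
For $2s<d$ we use the comparison $0\le G(x,y)\le G_{\RR^d}(x-y)=C|x-y|^{2s-d}$, which follows from the maximum principle. For $(s,d)=(1/2,1)$ the free-space kernel is logarithmic, and on a bounded domain we use $0\le G(x,y)\le C(1+|\log|x-y||)$; this is the bound presumably treated in Appendix~\ref{app:critical-case}. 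Throughout we assume these pointwise Green's function bounds. Cauchy--Schwarz in $y$ gives
\[
 |z(x)|\le \|G(x,\cdot)\|_{L^2(T_0)}\|\varphi\|_{L^2(T_0)},
\]
and since $|T_0|\simeq h^d$, it remains to estimate $\sup_{y\in T_0}|G(x,y)|$, or its $L^2(T_0)$ average, as a function of $x$.

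\emph{Far annuli.} Let $0\le k<J$ and $x\in\Omega_k$. Then $|x-x_0|\ge r_{k+1}\ge r_J\ge 2h$, while $|y-x_0|\le h$ for $y\in T_0$. Hence $|x-y|\ge r_{k+1}/2\simeq r_k$, so $|G(x,y)|\le Cr_k^{2s-d}$ and $\|G(x,\cdot)\|_{L^2(T_0)}\le Cr_k^{2s-d}h^{d/2}$. Integrating over $\Omega_k$, whose measure is at most $Cr_k^d$, yields
\[
 \|z\|_{L^2(\Omega_k)}\le C r_k^{2s-d}h^{d/2}r_k^{d/2},
\]
which is \eqref{eq:L2-out-comp-core}. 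In the critical case the same argument with the logarithmic kernel gives $\sqrt{hr_k}\,(1+|\log r_k|)$, which is \eqref{eq:L2-out-comp-core-critical}.

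\emph{Core, $2s<d$.} Here we use Young's inequality rather than the sup bound. Extend $\varphi$ by zero and write $|z|\le C\,|\cdot|^{2s-d}*|\varphi|$ on $\Omega_\ast\subset B_{r_J}(x_0)$. Since $T_0\subset B_h(x_0)$, only $|x-y|\le 2r_J$ matters, so
\[
 \|z\|_{L^2(\Omega_\ast)}\le C\bigl\||\cdot|^{2s-d}\mathbf 1_{B_{2r_J}}*|\varphi|\bigr\|_{L^2}.
\]
Split the kernel at $|x-y|=2h$.
\begin{itemize}
 \item[(i)] \emph{Near part.} Young's $L^1*L^2$ inequality with $\||\cdot|^{2s-d}\|_{L^1(B_{2h})}\le Ch^{2s}$ contributes $Ch^{2s}\|\varphi\|_{L^2}$.
 \item[(ii)] \emph{Far part, $2h\le|x-y|\le 2r_J$.} Here $x$ lies outside $B_h(x_0)$, and the pointwise estimate $|z(x)|\le C|x-x_0|^{2s-d}h^{d/2}\|\varphi\|_{L^2}$ holds. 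Squaring and integrating over $h\le|x-x_0|\le r_J$ gives
 \[
  Ch^d\|\varphi\|_{L^2}^2\int_h^{r_J}\rho^{4s-2d}\rho^{d-1}\,\mathrm{d}\rho .
 \]
\end{itemize}
The radial integral equals $C h^{4s-d}$ if $d>4s$, $C\log(r_J/h)$ if $d=4s$, and $Cr_J^{4s-d}$ if $d<4s$. Combined with (i), and noting $h^{2s}\le h^{d/2}r_J^{2s-d/2}$ when $d<4s$, this reproduces the three cases of \eqref{eq:L2-core-core}. The critical case \eqref{eq:L2-core-core-critical} follows the same way: the logarithmic kernel $1+|\log|x-y||\le C(1+|\log r_J|)$ up to an integrable near singularity, giving $\sqrt{hr_J}\,(1+|\log r_J|)$.

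The main obstacle is justifying the pointwise Green's function bound uniformly on the Lipschitz domain; for $2s<d$ the comparison with the free-space kernel is standard. The critical one-dimensional logarithmic bound requires the most care, and for it we defer to the estimate of Appendix~\ref{app:critical-case}. The remaining steps are routine radial integrations.
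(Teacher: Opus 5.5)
Your proof is correct and essentially the paper's: on $\Omega_k$ you use $|x-y|\simeq r_k$ with the pointwise Green bound. On the core you combine Young's inequality at distances $\lesssim h$ with the decay bound $|x-x_0|^{2s-d}h^{d/2}$ farther out; the paper does this dyadically (annuli $\Omega_k$, $J\le k<J_c$, plus an innermost ball of radius $r_{J_c}\simeq h$) rather than by splitting the kernel at $|x-y|=2h$. Two small points: the decay bound in (ii) holds for the far-part convolution (where $|x-y|\geq 2h$ forces $|x-y|\geq|x-x_0|/2$), not for $z$ itself; and in the critical core, the direct Young bound $\|\log(1+C/|\cdot|)\|_{L^2(B_{2r_J}(0))}\|\varphi\|_{L^1(T_0)}\leq C\sqrt{h r_J}(1+|\log r_J|)\|\varphi\|_{L^2(T_0)}$ replaces your vaguer ``integrable near singularity'' remark, with the logarithmic Green bound coming from Theorem~\ref{thm:Green-estimate} rather than the appendix.
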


\begin{lemma}[element sources: Sobolev estimates]\label{lem:local-regularity-core}
Let $0<s\leq1/2$, let $z$ solve \eqref{eqn:local-source}, and let
$0\leq\eps<s$, with $\eps>0$ if $s=1/2$.
Assume $r_J\geq2h$. For $0\leq k<J$,
\begin{align}
 |z|_{H^{2s-\eps}(A_k)}
 &\leq C\eps^{-\xi}(h/r_k)^{d/2}\|\varphi\|_{L^2(T_0)},
 \label{eqn:local-regularity-core}\\
 |z|_{H^{2s-\eps}(A_\ast)}
 &\leq C\eps^{-\xi}\|\varphi\|_{L^2(T_0)}.
 \label{eqn:local-regularity-core-core}
\end{align}
Here $\xi$ is defined in \eqref{eqn:xi-alpha}, with $\eps^{-\xi}=1$ for $s<1/2$.
The constant $C=C(d,s,\Omega)$ is independent of $h$, $x_0$, $T_0$, $k$, $J$,
$\eps$, and $\varphi$.
\end{lemma}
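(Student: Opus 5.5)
We prove Lemma~\ref{lem:local-regularity-core} by reducing it to the annular estimates of Lemmas~\ref{lem:local-L2-core} and~\ref{lem:local-regularity}. The reduction treats the near field, $A_\ast$ and the annuli close to $x_0$, separately from the far field, the annuli at distance $r_k\gg h$ from $T_0$.

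\emph{Core estimate \eqref{eqn:local-regularity-core-core}.} We use the global shift of Theorem~\ref{thm:sobolev-shift-lip}:
\[
|z|_{H^{2s-\eps}(\RR^d)}\le C\eps^{-\xi}\|\varphi\|_{L^2(\Omega)}=C\eps^{-\xi}\|\varphi\|_{L^2(T_0)}.
\]
Restricting to $A_\ast$ gives the bound directly. The same argument gives \eqref{eqn:local-regularity-core} whenever $r_k\le Ch$, for instance $k\ge J-c$ with $r_J\simeq h$, since then $(h/r_k)^{d/2}\simeq1$.

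\emph{Far annuli.} Fix $k$ with $r_k\ge 16h$, so that $T_0\subset B_{r_{k+4}}(x_0)$. We introduce a cutoff $\eta_k$ equal to $1$ on $A_k$ and supported in $A_k'$, with $|\nabla^i\eta_k|\le Cr_k^{-i}$. By Lemma~\ref{lem:frac-cutoff},
\[
|z|_{H^{2s-\eps}(A_k)}\le|\eta_k z|_{H^{2s-\eps}(\RR^d)}.
\]
The function $\eta_k z$ vanishes outside $\Omega$. On $\Omega$ we compute
\[
(-\Delta)^s(\eta_kz)=\eta_k\varphi+[(-\Delta)^s,\eta_k]z=[(-\Delta)^s,\eta_k]z,
\]
since $\eta_k\varphi=0$. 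We then apply the global shift to $\eta_kz$, which requires an $L^2(\Omega)$ bound for the commutator. Split it as $g=g_{\rm near}+g_{\rm far}$, according to whether $y$ lies in $A_k''$ or outside.

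\emph{Bounding the commutator.} In the near part, $|\eta_k(x)-\eta_k(y)|\le Cr_k^{-1}|x-y|$ removes the singularity. This near part is then bounded, in the spirit of the cutoff proof, by
\[
Cr_k^{-2s}\|z\|_{L^2(\Omega_k'')}+Cr_k^{1-2s}\|z\|_{H^1\text{-type}},
\]
where the $H^1$-type term is the problem. I would rather avoid it: rewrite the commutator through the Gagliardo difference form and control it by $r_k^{-s}|z|_{H^s(A_k'')}+r_k^{-2s}\|z\|_{L^2}$. The quantity $|z|_{H^s(A_k'')}$ is in turn controlled by Caccioppoli-type interior energy estimates, since $z$ is $s$-harmonic in $\Omega\cap A_k''$ away from $T_0$. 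The tail part obeys the pointwise bound
\[
|g_{\rm far}(x)|\le C\sum_i r_{i\wedge k}^{-d-2s}r_i^{d/2}\|z\|_{L^2(\Omega_i)}.
\]
Into both parts we insert \eqref{eq:L2-out-comp-core} and its core analogue \eqref{eq:L2-core-core}. The near part then gives $r_k^{-2s}r_k^{2s-d/2}h^{d/2}=(h/r_k)^{d/2}$. The tail sums are geometric and give the same order; the $A_\ast$ term uses $\|\varphi\|_{L^1}\le h^{d/2}\|\varphi\|_{L^2}$. Boundary annuli need no separate treatment, since $z$ and $\eta_kz$ vanish in $\Omega^c$, which is exactly the setting of the shift theorem. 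Multiplying the commutator bound by $C\eps^{-\xi}$ yields \eqref{eqn:local-regularity-core}.

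\emph{Alternative route.} A cleaner option is to mimic the proof of Lemma~\ref{lem:local-regularity}: treat $\varphi$ as a source supported in $\overline{A_\ast}$ and use the case $k\le j-4$, which gives the factor $(r_j/r_k)^{d/2}$. We then replace $\|\varphi\|_{L^2}r_j^{d/2}$, the relevant $L^1$-size, by $h^{d/2}\|\varphi\|_{L^2(T_0)}$. The proof of that lemma only uses the source through its Green-function $L^1$ size in the far field, and through the $L^2$ shift near the source.

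\emph{Main obstacle.} The main obstacle is the near-field commutator for the fractional power $2s-\eps\le1$. It requires localized $H^s$ control of $z$ on $A_k''$ near $\partial\Omega$, with the right scaling. For this I would use the Green's function bound $|G(x,y)|\le C|x-y|^{2s-d}$ together with the exterior kernel bound, Lemma~\ref{lem:delta-upper-bound}, to handle interactions with $\Omega^c$. The critical case $(s,d)=(1/2,1)$ follows the same scheme, with the logarithmic $L^2$ bounds \eqref{eq:L2-out-comp-core-critical} absorbed into the $\eps$-loss.
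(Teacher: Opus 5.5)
You handle the core estimate correctly, and for $2s<d$ the structure of your far-annulus argument is sound. The gap is the critical case $(s,d)=(1/2,1)$, which the statement includes. If you bound $|z|_{H^{1-\eps}(A_k)}$ through $L^2$ norms of $z$ (your commutator route, i.e.\ Lemma~\ref{lem:local-sobolev}) and insert \eqref{eq:L2-out-comp-core-critical}, you only obtain
\[
 |z|_{H^{1-\eps}(A_k)}\leq C\eps^{-1}(h/r_k)^{1/2}\bigl(1+|\log r_k|\bigr)\|\varphi\|_{L^2(T_0)}.
\]
The factor $1+|\log r_k|$ is of size $\ell_h$ when $r_k\simeq h$, and it cannot be ``absorbed into the $\eps$-loss'': $C$ must be independent of $\eps$, so for fixed $\eps$ the logarithm remains. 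It is also not an artifact of the $L^2$ bounds. Take $x_0$ at distance $\simeq1$ from $\partial\Omega$ and $\varphi=|T_0|^{-1/2}\chi_{T_0}$. Then on $A_k$, $z(x)=\pi^{-1}|T_0|^{1/2}\log(1/|x-x_0|)$ up to an error at most $Ch^{1/2}$. Hence $\|z\|_{L^2(A_k)}$ genuinely carries $|\log r_k|$, and only the oscillation of $z$ on $A_k$ is of size $(h/r_k)^{1/2}$.

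The missing idea is to control the seminorm by oscillation. If $\delta(x_0)>8r_k$, Lemma~\ref{lem:critical-green-oscillation} gives $|z(x)-z(x')|\leq Cr_k^{-1}h^{1/2}|x-x'|\,\|\varphi\|_{L^2(T_0)}$ on $A_k$. Hence $|z|_{H^{1-\eps}(A_k)}\leq Cr_k^{\eps}(h/r_k)^{1/2}\|\varphi\|_{L^2(T_0)}$. If $\delta(x_0)\leq8r_k$, you must use the boundary-decay form $\log(1+\sqrt{\delta(x)\delta(y)}/|x-y|)$ of Theorem~\ref{thm:Green-estimate} instead of the plain logarithm. This gives annular $L^2$ norms of order $\sqrt{hr_i}\,(1+(i-k)_+)\|\varphi\|_{L^2(T_0)}$ with no $|\log r_k|$, and only then may you apply Lemma~\ref{lem:local-sobolev}.

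The paper never redoes this for the element source. It splits $\varphi=\sum_{j\geq J_c+1}\varphi\chi_{\Omega_j}$ into dyadic annular pieces at scales $r_j<2h$ and applies Lemma~\ref{lem:local-regularity} to each piece. The critical proof of that lemma in Appendix~\ref{app:critical-case} contains exactly the two ingredients above. Since $k<J\leq J_c<j$, one has $k\leq j-2$, so $\kappa_{jk}\leq C(r_j/r_k)^{d/2}$. Cauchy--Schwarz then gives $\sum_j r_j^{d/2}\|\varphi\chi_{\Omega_j}\|_{L^2}\leq Ch^{d/2}\|\varphi\|_{L^2(T_0)}$. Your ``alternative route'' also needs this decomposition, because Lemma~\ref{lem:local-regularity} requires an annular source, not one in $\overline{A_\ast}$.

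Finally, the obstacle you single out is not one. For $s<1/2$ the near-field commutator kernel $\min\{1,|x-y|/r_k\}|x-y|^{-d-2s}$ has $L^1$ norm at most $Cr_k^{-2s}$. Young's inequality therefore gives $Cr_k^{-2s}\|z\|_{L^2(\Omega_k'')}$ with no $H^1$- or $H^s$-type term. For $s=1/2$ the first-order Taylor term is a Riesz transform, bounded on $L^2$, and the remainder kernel is integrable. This is exactly Lemma~\ref{lem:local-sobolev}, so the Caccioppoli estimate up to $\partial\Omega$, which you leave unproven, is unnecessary. With that lemma, your direct route for $2s<d$ is a valid alternative to the paper's decomposition. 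It combines Lemma~\ref{lem:local-sobolev}, Lemma~\ref{lem:local-L2-core}, and $\|z\|_{L^1(B_\rho(x_0))}\leq C\rho^{2s}h^{d/2}\|\varphi\|_{L^2(T_0)}$ for the tail near $T_0$.
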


The estimates in Lemmas~\ref{lem:local-L2}--\ref{lem:local-regularity-core}
also hold with $A_k$, $A_\ast$, $\Omega_k$, and $\Omega_\ast$ replaced by
$A_k^{(p)}$, $A_\ast^{(p)}$, $\Omega_k^{(p)}$, and $\Omega_\ast^{(p)}$,
respectively, for any fixed $p\in\NN_+$, with constants also depending on $p$.

\subsection{Green's function and regularity estimates}
We collect the tools for Lemmas~\ref{lem:local-L2}--\ref{lem:local-regularity-core}.
Green's function bounds yield the $L^2$ estimates. For the Sobolev estimates,
we recall global regularity and establish localized regularity and an interior
gradient estimate.

\begin{theorem}[Green's function estimate, {\cite[pp.~1309--1310]{ChenKimSong2010}}]\label{thm:Green-estimate}
Let $\Omega$ be a bounded Lipschitz domain satisfying a uniform exterior ball
condition, and let $0<s\leq1/2$. The Green's function $G_\Omega$ of the integral
fractional Laplacian with zero exterior values satisfies, for $x,y\in\Omega$
with $x\ne y$,
\begin{equation}\label{eqn:Green-estimate}
 G_\Omega(x,y)\leq
 \begin{cases}
 C|x-y|^{2s-d}\min\left\{1,\dfrac{\delta(x)^s}{|x-y|^s}\right\}
 \min\left\{1,\dfrac{\delta(y)^s}{|x-y|^s}\right\},&d>2s,\\[3pt]
 C\log\left(1+\dfrac{\sqrt{\delta(x)\delta(y)}}{|x-y|}\right),&d=2s,
 \end{cases}
\end{equation}
where $C$ depends only on $d$, $s$, and $\Omega$.
\end{theorem}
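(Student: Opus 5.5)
The bound is quoted from~\cite{ChenKimSong2010}. If it had to be proved directly, I would combine domain monotonicity with an exterior-ball barrier, applied once in each variable and joined through the symmetry $G_\Omega(x,y)=G_\Omega(y,x)$.

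\emph{Step 1: the bound without boundary factors.} For $d>2s$, domain monotonicity of Green's functions gives $G_\Omega\le G_{\RR^d}$. Monotonicity follows from the maximum principle, or from the probabilistic representation via killed $2s$-stable processes. The Riesz kernel is $G_{\RR^d}(x,y)=c_{d,s}'|x-y|^{2s-d}$. This yields \eqref{eqn:Green-estimate} whenever both minima equal $1$, that is, when $\delta(x),\delta(y)\gtrsim|x-y|$.

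\emph{Step 2: one boundary factor.} Fix $x$, set $\rho:=|x-y|$, and suppose $\delta(y)<\rho/4$. The function $u:=G_\Omega(x,\cdot)$ is $s$-harmonic in $B_{\rho/2}(y)\cap\Omega$ and vanishes on $\Omega^c$. I would take the boundary point $z$ nearest to $y$ and the exterior ball touching $\partial\Omega$ at $z$. From this ball, the standard Ros-Oton--Serra barrier~\cite[Lemma~2.7]{RosOtonSerra2014} (built from $(|\cdot-c|-r)_+^s$) gives
\[
 u(y)\le C\Bigl(\frac{\delta(y)}{\rho}\Bigr)^s\Bigl(\sup_{B_{\rho/2}(y)}u
 +\rho^{2s}\int_{B_{\rho/2}(y)^c}\frac{u(w)}{|w-y|^{d+2s}}\,\mathrm{d}w\Bigr).
\]
By Step 1, the supremum is at most $C\rho^{2s-d}$. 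The tail splits into $B_{\rho/4}(x)$, where I use $\int_{B_{\rho/4}(x)}|x-w|^{2s-d}\,\mathrm{d}w\lesssim\rho^{2s}$, and its complement, where $u\lesssim\rho^{2s-d}$. Both pieces are $O(\rho^{2s-d})$. Together this gives the single factor $\min\{1,(\delta(y)/\rho)^s\}$. The uniform radius $r_e$ makes the constants uniform in $y$; at scales above $r_e$ I would use the trivial bound together with $\diam\Omega\le1$.

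\emph{Step 3: both factors.} This is the main obstacle. I would repeat Step 2 in the variable $x$ for the function $G_\Omega(\cdot,y)$. The local supremum and the tail must now be controlled by the one-factor bound from Step 2 rather than by the Riesz bound. On the region where $|w-y|\sim\rho$, this feeds the factor $(\delta(y)/\rho)^s$ through. The delicate part is the tail over $w$ near $y$. There I would estimate
\[
 \int_{B_{\rho/4}(y)}|w-y|^{2s-d}\min\{1,\delta(y)^s|w-y|^{-s}\}\,\mathrm{d}w
\]
and check that, after multiplication by $\rho^{-d}$, it is still bounded by $\rho^{s-d}\delta(y)^s$. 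It is: the integral is $\lesssim\delta(y)^s\rho^{s}$ because $s<2s$. Far tails are handled by the decay of the kernel.

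\emph{The case $d=2s$.} Then $d=1$, $s=1/2$, and $\Omega$ is an interval. I would use M.~Riesz's explicit Green's function for the interval $(-1,1)$, namely $G(x,y)=c\log\bigl(\bigl(1-xy+\sqrt{(1-x^2)(1-y^2)}\bigr)/|x-y|\bigr)$. Elementary manipulation bounds it by $C\log\bigl(1+\sqrt{\delta(x)\delta(y)}/|x-y|\bigr)$, and scaling transfers the bound to an arbitrary interval.
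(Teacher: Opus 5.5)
Your argument is correct in outline, but it takes a genuinely different route from the paper. The paper gives no proof and quotes the bound from \cite{ChenKimSong2010}, where it is obtained by integrating sharp two-sided Dirichlet heat kernel estimates in time. You argue purely elliptically. You dominate by the Riesz kernel and use an exterior-ball barrier to get boundary decay in one variable. You then apply the barrier in the other variable, with its local supremum and tail controlled by the one-factor bound. Your near-$y$ computation $\int_{B_{\rho/4}(y)}|w-y|^{2s-d}\min\{1,\delta(y)^s|w-y|^{-s}\}\,\mathrm{d}w\lesssim\delta(y)^s\rho^s$ is right. For the recurrent case $d=2s$ you use the explicit Riesz formula on an interval. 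There the identity $(1-xy)^2-(1-x^2)(1-y^2)=(x-y)^2$ and $1-x^2\le2\delta(x)$ give the logarithmic bound at once.

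The heat-kernel route yields two-sided estimates for the whole range $0<s<1$, but it rests on much heavier machinery, and the estimates in \cite{ChenKimSong2010} are formulated for $C^{1,1}$ open sets. Your route yields only the upper bound, which is all the paper uses. It needs nothing beyond the uniform exterior ball condition, which is exactly the hypothesis of the statement.

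Two details need tightening.

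First, your localized decay inequality is not literally \cite[Lemma~2.7]{RosOtonSerra2014}, which is a global statement for bounded right-hand sides. It follows from the Ros-Oton--Serra supersolution, placed on an exterior ball of radius comparable to $\rho$, in two steps. Smoothly truncate $u$ outside $B_{3\rho/8}(y)$; this turns the tail into a bounded right-hand side on $\Omega\cap B_{\rho/4}(y)$. Then add the term $\bigl(\sup_{B_{\rho/2}(y)}u\bigr)(1-\eta)$, with a cutoff $\eta$ supported in $B_{\rho/4}(y)$ and equal to one near $y$, to dominate the values away from $y$.

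Second, for $\rho\gtrsim r_e$ the Riesz bound alone gives no decay as $\delta(y)\to0$, so you still need the barrier at those scales. Run it at scale $\min\{\rho,r_e\}$ and absorb the factor $(\rho/r_e)^s\le r_e^{-s}$; this is where $\diam\Omega\le1$ enters.
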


The next estimates follow from \cite[Corollary~3.8]{BorthagarayLiNochetto2024}
for $s<1/2$ and \cite[Remark~6]{BorthagarayNochetto2023} for $s=1/2$.
\begin{theorem}[global Sobolev regularity]\label{thm:sobolev-shift-lip}
Let $\Omega$ be a bounded Lipschitz domain, $0<s\leq1/2$, and let
$u\in\widetilde H^s(\Omega)$ solve \eqref{eqn:fractional-laplacian} with
$f\in L^2(\Omega)$. For every $0\leq\eps<s$, with $\eps>0$ if $s=1/2$,
we have $u\in\widetilde H^{2s-\eps}(\Omega)$ and
\begin{equation}\label{eqn:Sobolev-shift-all}
 |u|_{H^{2s-\eps}(\RR^d)}\leq C\eps^{-\xi}\|f\|_{L^2(\Omega)},
\end{equation}
where $\xi$ is defined in \eqref{eqn:xi-alpha} and $C=C(d,s,\Omega)$ is
independent of $\eps$.
\end{theorem}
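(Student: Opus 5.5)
The plan is to reduce the statement to a single boundary Besov estimate from the cited literature, and then handle the $\eps$-dependence by elementary Fourier-side embeddings. The normalization $c_{d,\sigma}$ in \eqref{eq:slobo} is chosen so that $|v|_{H^\sigma(\RR^d)}^2=\int_{\RR^d}|\zeta|^{2\sigma}|\widehat v(\zeta)|^2\,\mathrm d\zeta$ for $0<\sigma<1$ (with $2\pi$ conventions absorbed). I will therefore work on the Fourier side throughout, with the zero extension of $u$.

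\emph{Step 1: energy bound.} Testing \eqref{eqn:fractional-laplacian} with $v=u$ gives $|u|_{H^s(\RR^d)}^2=a_s(u,u)=(f,u)_{L^2(\Omega)}$. The fractional Poincar\'e inequality on the bounded set $\Omega$ gives $\|u\|_{L^2}\le C|u|_{H^s(\RR^d)}$. Combining the two yields $\|u\|_{H^s(\RR^d)}\le C\|f\|_{L^2(\Omega)}$.

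\emph{Step 2: boundary Besov regularity.} Here I invoke \cite[Corollary~3.8]{BorthagarayLiNochetto2024} and \cite[Remark~6]{BorthagarayNochetto2023}. For a Lipschitz domain and $f\in L^2(\Omega)$, these give $u\in B^{s+1/2}_{2,\infty}(\RR^d)$ with $\|u\|_{B^{s+1/2}_{2,\infty}}\le C\|f\|_{L^2(\Omega)}$. They are proved by a difference-quotient (Savar\'e-type) argument along directions transversal to the Lipschitz boundary. This is the only genuinely deep ingredient, and it is the step I expect to be the obstacle if it had to be done from scratch. In particular, the translation estimate in the boundary strip is where the exponent $s+\tfrac12$, rather than $2s$, appears. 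I will take it as given, exactly as the statement's attribution indicates.

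\emph{Step 3: embedding with explicit constants.} Write $u=\sum_{k\ge0}\Delta_k u$ for a Littlewood--Paley decomposition, so that $\|\Delta_k u\|_{L^2}\le 2^{-k(s+1/2)}\|u\|_{B^{s+1/2}_{2,\infty}}$. Then
\[
 |u|_{H^{2s-\eps}(\RR^d)}^2\le C\sum_{k\ge0}2^{2k(2s-\eps)}\|\Delta_k u\|_{L^2}^2
 \le C\|u\|_{B^{s+1/2}_{2,\infty}}^2\sum_{k\ge0}2^{-2k(1/2-s+\eps)}.
\]
The treatment of this geometric sum splits by the value of $s$.

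For $s<\tfrac12$, the exponent satisfies $\tfrac12-s+\eps\ge\tfrac12-s>0$, so the sum is bounded uniformly in $\eps\in[0,s)$. This gives \eqref{eqn:Sobolev-shift-all} with $\xi=0$; the case $\eps=0$ is included.

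For $s=\tfrac12$, the sum equals $(1-2^{-2\eps})^{-1}\le C\eps^{-1}$. This yields $|u|_{H^{1-\eps}}\le C\eps^{-1/2}\|f\|\le C\eps^{-1}\|f\|$ for $0<\eps<\tfrac12$, which is \eqref{eqn:Sobolev-shift-all} with $\xi=1$.

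The low-frequency block $k=0$ is controlled by Step~1, since there $|\zeta|^{2(2s-\eps)}\le 1$. Finally, $u\in\widetilde H^{2s-\eps}(\Omega)$ because $u$ vanishes on $\Omega^c$ and we have just shown $u\in H^{2s-\eps}(\RR^d)$. The constant depends only on $d$, $s$, $\Omega$ through Step~2 and is independent of $\eps$ apart from the displayed factor.
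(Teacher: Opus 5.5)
\textbf{Gap in Step 2.} The argument breaks at Step~2. The cited results do not give $u\in B^{s+1/2}_{2,\infty}(\RR^d)$ for $L^2$ data when $s\le\tfrac12$, and for $s<\tfrac12$ no such estimate can hold. The Besov shift of Borthagaray--Nochetto needs data of smoothness $\tfrac12-s$ (in their formulation $f\in B^{1/2-s}_{2,1}(\Omega)$). This class is strictly smaller than $L^2(\Omega)$ exactly in the range $s\le\tfrac12$.

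For $s<\tfrac12$ there is a direct interior obstruction. Suppose $\|u\|_{B^{s+1/2}_{2,\infty}}\le C\|f\|_{L^2(\Omega)}$ held. Then $u\in H^{s+1/2-\delta}(\RR^d)$ for every $\delta>0$, so $(-\Delta)^s u\in H^{\sigma}(\RR^d)$ with $\sigma:=\tfrac12-s-\delta>0$. Since $(-\Delta)^s u=f$ in $\mathcal D'(\Omega)$, this would force $f\in H^\sigma(\Omega)$ for every $L^2$ source. That is false: take $f$ supported near an interior point and not in $L^{p}$ for any $p>2$ there, and use $H^\sigma\subset L^{2d/(d-2\sigma)}$. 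An order-$2s$ operator cannot gain more than $2s$ derivatives from $L^2$ data, and $s+\tfrac12>2s$ in this range. The fact that your geometric sum converges with room to spare for $s<\tfrac12$ should have been a warning sign.

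At $s=\tfrac12$ the same issue appears in borderline form, since $L^2(\Omega)\not\subset B^0_{2,1}(\Omega)$. Your route would yield the factor $\eps^{-1/2}$, which is strictly better than the $\eps^{-1}$ in the statement. That extra loss is what $L^2$ data costs at the borderline. Heuristically, in the half-line model a source $f\approx x^{-1/2}\log(1/x)^{-a}$ with $\tfrac12<a<1$ is square integrable. The solution then behaves like $x_+^{1/2}\log(1/x)^{1-a}$, whose second differences have $L^2$ size about $h\log(1/h)^{1-a}$, so it is not in $B^1_{2,\infty}$.

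\textbf{What to use instead.} The paper gives no argument beyond the citations, and the inputs differ between the two cases. For $s<\tfrac12$, \cite[Corollary~3.8]{BorthagarayLiNochetto2024} supplies the full shift $\|u\|_{H^{2s}(\RR^d)}\le C\|f\|_{L^2(\Omega)}$ directly. For $0\le\eps<s$, combine three things: the pointwise bound $|\zeta|^{2(2s-\eps)}\le|\zeta|^{2s}+|\zeta|^{4s}$, your (correct) Fourier identity for the normalized seminorm, and your Step~1. Together they give $|u|_{H^{2s-\eps}(\RR^d)}\le C\|f\|_{L^2(\Omega)}$ uniformly in $\eps$, which is the case $\xi=0$. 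For $s=\tfrac12$, the paper takes the $\eps$-explicit $\widetilde H^{1-\eps}(\Omega)$ bound for $L^2$ data, with the factor $\eps^{-1}$, from \cite[Remark~6]{BorthagarayNochetto2023}. This is the statement with $\xi=1$, and it does not come from embedding a $B^1_{2,\infty}$ estimate. Your Steps~1 and~3 are fine as computations, but Step~3 has no valid input to act on as written.
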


\begin{lemma}[localized Sobolev regularity]\label{lem:local-sobolev}
Let $\Omega$ be a bounded Lipschitz domain, $0<s\leq1/2$, and let
$u\in\widetilde H^s(\Omega)$ solve \eqref{eqn:fractional-laplacian} with
$f\in L^2(\Omega)$. For $0\leq k<J$ and $\eps$ as in Theorem~\ref{thm:sobolev-shift-lip},
\begin{equation}\label{eqn:local-sobolev-shift}
 \begin{aligned}
 |u|_{H^{2s-\eps}(A_k)}
 &\leq C\eps^{-\xi}\biggl(\|f\|_{L^2(\Omega_k')}
       +r_k^{-2s}\|u\|_{L^2(\Omega_k'')}
       +\int_\Omega
       \frac{r_k^{d/2}|u(y)|}{r_k^{d+2s}+|y-x_0|^{d+2s}}\,\mathrm{d}y\biggr),\\
 |u|_{H^{2s-\eps}(A_\ast)}
 &\leq C\eps^{-\xi}\biggl(\|f\|_{L^2(\Omega_\ast')}
       +r_J^{-2s}\|u\|_{L^2(\Omega_\ast'')}
       +\int_\Omega
       \frac{r_J^{d/2}|u(y)|}{r_J^{d+2s}+|y-x_0|^{d+2s}}\,\mathrm{d}y\biggr).
 \end{aligned}
\end{equation}
Here $C=C(d,s,\Omega)$ is independent of $x_0$, $k$, $J$, and $\eps$.
\end{lemma}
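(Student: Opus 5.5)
We localize with a cutoff and apply the global regularity of Theorem~\ref{thm:sobolev-shift-lip} to the localized function. We treat the annulus $A_k$; the core $A_\ast$ is identical with $r_k$ replaced by $r_J$ and the enlarged regions by $A_\ast'$, $A_\ast''$. Fix $\eta\in C^2(\RR^d)$ with $0\le\eta\le1$, $\eta\equiv1$ on $A_k$, $\supp\eta\subset A_k'$ and $|\nabla^i\eta|\le Cr_k^{-i}$. Set $\tilde u:=\eta u\in\widetilde H^s(\Omega)$. Then $|u|_{H^{2s-\eps}(A_k)}\le|\tilde u|_{H^{2s-\eps}(\RR^d)}$.

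\textbf{Step 1: the equation for $\tilde u$.} For $x\in\Omega$ we write
\[
(-\Delta)^s\tilde u=\eta f+[(-\Delta)^s,\eta]u,\qquad [(-\Delta)^s,\eta]u(x)=c_{d,s}\int_{\RR^d}\frac{(\eta(y)-\eta(x))u(y)}{|x-y|^{d+2s}}\,\mathrm{d}y,
\]
interpreted weakly by testing against $\varphi\in\widetilde H^s(\Omega)$ and using $a_s(u,\eta\varphi)=\int f\eta\varphi$. Theorem~\ref{thm:sobolev-shift-lip} then gives
\[
|\tilde u|_{H^{2s-\eps}}\le C\eps^{-\xi}\bigl(\|\eta f\|_{L^2}+\|g\|_{L^2(\Omega)}\bigr),\qquad g:=[(-\Delta)^s,\eta]u .
\]
The first term is bounded by $\|f\|_{L^2(\Omega_k')}$.

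\textbf{Step 2: bounding the commutator $g$.} We split the integral according to the position of $x$ and $y$.
\begin{itemize}
\item[(a)] \emph{$x\in\Omega_k''$, $y\in\Omega_k''$.} Since $|\eta(x)-\eta(y)|\le Cr_k^{-1}|x-y|$, the kernel is bounded by $\min\{r_k^{-1}|x-y|^{1-d-2s},\,|x-y|^{-d-2s}\}$. Its $L^1$ norm is $Cr_k^{-2s}$, with a constant uniform because $s\le1/2<1$. Young's inequality gives the contribution $Cr_k^{-2s}\|u\|_{L^2(\Omega_k'')}$.
\item[(b)] \emph{$x\in\Omega_k''$ (so $|x-x_0|\simeq r_k$), $y\notin A_k''$.} Then $|x-y|\gtrsim r_k+|y-x_0|$, so $|g(x)|\le C\int_\Omega|u(y)|\,(r_k^{d+2s}+|y-x_0|^{d+2s})^{-1}\,\mathrm{d}y$. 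Taking the $L^2$ norm over a set of measure $\lesssim r_k^d$ produces the factor $r_k^{d/2}$, which is exactly the tail term.
\item[(c)] \emph{$x\in\Omega\setminus A_k''$.} Here $\eta(x)=0$, so $g(x)=c\int_{A_k'}\eta(y)u(y)|x-y|^{-d-2s}\,\mathrm{d}y$, and $|x-y|\gtrsim r_k+|x-x_0|$. By Cauchy--Schwarz, $|g(x)|\le C r_k^{d/2}\|u\|_{L^2(\Omega_k')}(r_k+|x-x_0|)^{-d-2s}$. Its $L^2$ norm over $\RR^d$ is at most $Cr_k^{d/2}r_k^{-d/2-2s}\|u\|_{L^2(\Omega_k')}=Cr_k^{-2s}\|u\|_{L^2(\Omega_k')}$.
\end{itemize}
Summing (a)–(c) gives \eqref{eqn:local-sobolev-shift}.

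\textbf{Main obstacle.} The delicate point is justifying that $g\in L^2(\Omega)$ and that the weak identity holds. The function $\eta\varphi$ is admissible in $\widetilde H^s(\Omega)$, and $a_s(\eta u,\varphi)-a_s(u,\eta\varphi)$ expands into a double integral equal to $\int g\varphi$. Because the singularity of the kernel is cancelled by the factor $\eta(x)-\eta(y)$, this integral is absolutely convergent for $s\le1/2$.

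We also need the constant to be uniform in $\eps$ and $s$. Theorem~\ref{thm:sobolev-shift-lip} already gives the factor $\eps^{-\xi}$, and the constants in (a)–(c) are independent of $\eps$.

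For the core $A_\ast$, take $\eta$ supported in $B_{r_{J-1}}(x_0)$ with $\eta\equiv1$ on $A_\ast$. The same three cases apply, using $|y-x_0|\lesssim r_J$ on $A_\ast''$.
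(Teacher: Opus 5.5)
For $0<s<1/2$ your argument is essentially the paper's: cut off with $\eta$, write $(-\Delta)^s(\eta u)=\eta f+c_{d,s}\,\mathcal{C}(u)$, bound the far field by the tail integral and the near field by Young's inequality, and apply Theorem~\ref{thm:sobolev-shift-lip}. Your commutator has the wrong sign, since the kernel factor of $(-\Delta)^s(\eta u)-\eta(-\Delta)^s u$ is $\eta(x)-\eta(y)$, but this is harmless.

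There is a genuine gap at the endpoint $s=1/2$. The lemma explicitly covers this case, and the paper needs it for $\alpha=d+2$. In Step~2(a) you claim that the kernel $\min\{r_k^{-1}|z|^{1-d-2s},|z|^{-d-2s}\}$ has $L^1$ norm $Cr_k^{-2s}$ ``because $s\le1/2<1$''. In fact
\[
\int_{|z|<r_k} r_k^{-1}|z|^{1-d-2s}\,\mathrm{d}z=\frac{C_d\,r_k^{-2s}}{1-2s}.
\]
This is finite only for $s<1/2$, and it is not uniform as $s\uparrow1/2$. At $s=1/2$ the Lipschitz bound on $\eta$ leaves the kernel $r_k^{-1}|x-y|^{-d}$, which is not locally integrable, so Young's inequality does not apply. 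For the same reason, your remark that the commutator integral is absolutely convergent for $s\le1/2$ is false at $s=1/2$. There the commutator exists only as a principal value, and its $L^2$ bound must come from cancellation.

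The missing idea is to separate the odd first-order part of $\eta_k(x)-\eta_k(y)$, as the paper does. Taylor's formula gives
\[
\eta_k(x)-\eta_k(y)=\nabla\eta_k(x)\cdot(x-y)+R_k(x,y),\qquad |R_k(x,y)|\le C\min\{1,|x-y|^2/r_k^2\}\ \text{ for } y\in A_k''.
\]
The linear part produces $\sum_\ell\partial_\ell\eta_k\,T_\ell u_1$, where each $T_\ell$ is a multiple of a Riesz transform. $T_\ell$ is bounded on $L^2$ by Fourier or Calder\'on--Zygmund theory, not by absolute integrability, which gives $Cr_k^{-1}\|u\|_{L^2(\Omega_k'')}$. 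Only the remainder is treated by Young's inequality; its kernel $\min\{1,|z|^2/r_k^2\}|z|^{-d-1}$ has $L^1$ norm $Cr_k^{-1}$. Calder\'on's first commutator theorem would also work. Correspondingly, at $s=1/2$ the weak identity should be proved for smooth $u$ and extended by density using these $L^2$ bounds, not by absolute convergence.

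A smaller slip is in (b). The bound $|x-y|\gtrsim r_k+|y-x_0|$ fails for $x\in A_k''\setminus A_k'$ near the inner boundary. There, however, $\eta(x)=\eta(y)=0$, so only $x\in\supp\eta\subset A_k'$ contributes, and this is what the paper uses.
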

\begin{proof}
Choose a smooth cutoff $0\leq\eta_k\leq1$ with
\[
 \eta_k=1\quad\text{on }A_k,\qquad
 \supp\eta_k\subset A_k',\qquad
 \|\nabla^m\eta_k\|_{L^\infty}\leq Cr_k^{-m},\quad m=1,2.
\]
Since $u=0$ in $\Omega^c$, we have $\eta_k u\in\widetilde H^s(\Omega)$ and
$\supp(\eta_k u)\subset\overline{\Omega_k'}$.
In the distributional sense on $\Omega$, the product identity reads
\[
 (-\Delta)^s(\eta_k u)=\eta_k f+c_{d,s}\mc C(u),
 \quad
 \mc C(u)(x):=\operatorname{p.v.}\int_{\RR^d}
 \frac{(\eta_k(x)-\eta_k(y))u(y)}{|x-y|^{d+2s}}\,\mathrm{d}y.
\]
This identity holds first for smooth $u$ and extends by approximation using
the $L^2$ bounds below. Set $u_1:=u\chi_{A_k''}$ and $u_2:=u-u_1$.

For the far field $u_2$, the identity $\eta_k u_2=0$ implies that
$\mc C(u_2)$ vanishes outside $A_k'$. For $x\in A_k'$ and $y\notin A_k''$,
we have $|x-y|\geq (r_k+|y-x_0|)/C$, so
\[
 |\mc C(u_2)(x)|\leq C|\eta_k(x)|\int_\Omega
 \frac{|u(y)|}{r_k^{d+2s}+|y-x_0|^{d+2s}}\,\mathrm{d}y.
\]
Since $|A_k'|\leq Cr_k^d$, this gives
\[
 \|\mc C(u_2)\|_{L^2(\Omega)}\leq Cr_k^{d/2}\int_\Omega
 \frac{|u(y)|}{r_k^{d+2s}+|y-x_0|^{d+2s}}\,\mathrm{d}y.
\]

For the near field $u_1$, if $s<1/2$, then
$ |\mc C(u_1)|\leq C\left(
 \min\{1,|\cdot|/r_k\}|\cdot|^{-d-2s}\right)\ast|u_1|$.
The convolution kernel has $L^1(\RR^d)$ norm bounded by $C r_k^{-2s}$.
Young's inequality therefore yields
\[
 \|\mc C(u_1)\|_{L^2(\RR^d)}
 \leq C r_k^{-2s}\|u_1\|_{L^2(\RR^d)}
 =C r_k^{-2s}\|u\|_{L^2(\Omega_k'')}.
\]

If $s=1/2$, Taylor's formula gives
\[
 \eta_k(x)-\eta_k(y)=\nabla\eta_k(x)\cdot(x-y)+R_k(x,y),
 \quad |R_k(x,y)|\leq Cr_k^{-2}|x-y|^2.
\]
Hence $\mc C(u_1)(x) =\sum_{\ell=1}^d\partial_\ell\eta_k(x)T_\ell u_1(x)+Su_1(x)$, where
\[
 T_\ell u_1(x):=\operatorname{p.v.}\int_{\RR^d}
   \frac{x_\ell-y_\ell}{|x-y|^{d+1}}u_1(y)\,\mathrm{d}y,
 \quad 
 Su_1(x):=\int_{\RR^d}\frac{R_k(x,y)}{|x-y|^{d+1}}u_1(y)\,\mathrm{d}y.
\]
Each $T_\ell$ is a constant multiple of a Riesz transform. Its $L^2$-boundedness
gives
\begin{equation}\label{eqn:riesz-bound}
 \left\|\sum_{\ell=1}^d\partial_\ell\eta_k\,T_\ell u_1\right\|_{L^2(\RR^d)}
 \leq Cr_k^{-1}\|u\|_{L^2(\Omega_k'')}.
\end{equation}
For $y\in A_k''$, we also have $|R_k(x,y)|\leq C$: if $x\in A_k''$, then
$|x-y|\leq Cr_k$; otherwise $\eta_k(x)=0$ and $\nabla\eta_k(x)=0$.
Together with the Taylor bound, this gives
\[
 |R_k(x,y)|\leq C\min\{1,|x-y|^2/r_k^2\},\qquad y\in A_k''.
\]
Young's inequality now yields
\[
 \|Su_1\|_{L^2(\RR^d)}
 \leq C\left\|\min\{1,|\cdot|^2/r_k^2\}|\cdot|^{-d-1}\right\|_{L^1(\RR^d)}
       \|u_1\|_{L^2(\RR^d)}
 \leq Cr_k^{-1}\|u\|_{L^2(\Omega_k'')}.
\]
Together with \eqref{eqn:riesz-bound}, this proves the same near-field bound
for $s=1/2$.

The preceding estimates show that $(-\Delta)^s(\eta_k u)\in L^2(\Omega)$.
Applying Theorem~\ref{thm:sobolev-shift-lip} (global Sobolev regularity) on the
fixed domain $\Omega$ gives
\begin{equation}\label{eqn:local-u}
 |u|_{H^{2s-\eps}(A_k)}
 \leq|\eta_k u|_{H^{2s-\eps}(\RR^d)}
 \leq C\eps^{-\xi}\|(-\Delta)^s(\eta_k u)\|_{L^2(\Omega)}.
\end{equation}
Combining this with the commutator bounds proves the annular estimate in
\eqref{eqn:local-sobolev-shift}. The same argument with $A_k$, $A_k'$, $A_k''$,
and $r_k$ replaced by $A_\ast$, $A_\ast'$, $A_\ast''$, and $r_J$, respectively,
proves the core estimate and completes the proof.
\end{proof}

The following gradient estimate is a consequence of \cite[Lemmas~2.1 and~3.2]{BogdanKulczyckiNowak2002}; we include a direct proof for completeness.
\begin{lemma}[interior gradient estimate]\label{lem:gradient-estimate}
Let $B_{R_0}(x_0)\subset\Omega$ and let $w,W\in\widetilde H^s(\Omega)$ be
$s$-harmonic in $B_{R_0}(x_0)$ in the weak sense. If $W\geq0$ and
$|w|\leq W$ a.e. in $\RR^d$, then
\begin{equation}\label{eq:gradient-estimate}
 \|\nabla w\|_{L^\infty(B_{R_0/2}(x_0))}\leq CR_0^{-1}W(x_0),
\end{equation}
where $C=C(d,s)$ is independent of $R_0$ and $x_0$.
\end{lemma}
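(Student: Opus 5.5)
We prove Lemma~\ref{lem:gradient-estimate} by reducing to the unit scale and using the Poisson kernel representation of $s$-harmonic functions in a ball. By translation and the scaling $x\mapsto x_0+R_0x$, which preserves $s$-harmonicity and the hypotheses $|w|\le W$, $W\ge0$, it suffices to treat $x_0=0$ and $R_0=1$. The factor $R_0^{-1}$ in \eqref{eq:gradient-estimate} is then produced by the chain rule.

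The first step is to pass from weak to pointwise $s$-harmonicity. Functions in $\widetilde H^s(\Omega)$ that are weakly $s$-harmonic in $B_1$ are smooth in $B_1$ and coincide a.e.\ with their Poisson integrals. So for $|x|<1$,
\[
 w(x)=\int_{|y|>1}P_1(x,y)\,w(y)\,\mathrm{d}y,\qquad
 P_1(x,y)=c(d,s)\Bigl(\frac{1-|x|^2}{|y|^2-1}\Bigr)^{s}\frac{1}{|x-y|^{d}},
\]
and the same formula holds for $W$. This identification is the step needing the most care. I would argue it via mollification: the mollified functions are $s$-harmonic in slightly smaller balls, and uniqueness for the exterior Dirichlet problem in $\widetilde H^s$-type spaces identifies them with their Poisson integrals. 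The integrals converge because $w,W\in L^2$ with compact support. Alternatively, one may cite the equivalence of weak and distributional solutions.

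Next, differentiate under the integral for $|x|\le 1/2$. The dependence on $x$ is through $(1-|x|^2)^s|x-y|^{-d}$, and on $|x|\le1/2$, $|y|>1$ we have $|x-y|\ge \tfrac12\max\{1,|y|-1\}$, more precisely $|x-y|\simeq |y|$ up to constants, with $|x-y|\ge1/2$. Hence
\[
 |\nabla_xP_1(x,y)|\le C\,\frac{1}{(|y|^2-1)^{s}}\frac{1}{|x-y|^{d}}\Bigl(1+\frac1{|x-y|}\Bigr)\le C\,P_1(0,y)\cdot\frac{|y|^{d}}{|x-y|^{d}}\le C\,P_1(0,y).
\]
Here we use that $|y|/|x-y|\le C$ on this region, and that $(1-|x|^2)^{s}\ge (3/4)^{s}$, so dividing by it is harmless. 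Therefore
\[
 |\nabla w(x)|\le \int_{|y|>1}|\nabla_xP_1(x,y)|\,|w(y)|\,\mathrm{d}y\le C\int_{|y|>1}P_1(0,y)\,W(y)\,\mathrm{d}y=C\,W(0),
\]
using $|w|\le W$ and the Poisson representation of $W$ at the center. Scaling back gives $\|\nabla w\|_{L^\infty(B_{R_0/2}(x_0))}\le CR_0^{-1}W(x_0)$, with $C=C(d,s)$.

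The main obstacle is justifying the Poisson representation and pointwise evaluation $W(x_0)$ for merely weak $\widetilde H^s$ solutions. The kernel estimate itself is elementary, because the singular factor $(|y|^2-1)^{-s}$ is common to $P_1(0,y)$ and $\nabla_xP_1(x,y)$. Since $W$ is smooth inside the ball, $W(x_0)$ is well defined once the representation is established.
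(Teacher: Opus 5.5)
Your argument is the paper's: the Poisson representation, the kernel bound $|\nabla_xP(x,y)|\le CR_0^{-1}P(x_0,y)$ on the half ball, and the Poisson formula for the majorant $W$ at the center. The one real difference is that, after rescaling, you represent on the full ball $B_1$. The paper instead uses the ball of radius $3/4$, and that choice is what disposes of the step you call the main obstacle. Since $\overline{B_{3/4}}\subset B_1$, the functions $w$ and $W$ are smooth across the sphere $|y|=3/4$ on which the factor $(|y|^2-9/16)^{-s}$ is singular. So the classical Poisson formula applies, and the integrals, as well as differentiation under them, are controlled trivially.

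On the full ball, your justification of convergence is not correct as written. For $s\ge 1/2$, which includes the case $s=1/2$ used in the paper, $(|y|^2-1)^{-s}$ is not square integrable near $|y|=1$. Outside $B_1$ the function $w$ carries only $H^s$ information, so being in $L^2$ with compact support does not suffice. You would need more, for example a Sobolev embedding, or the full-ball representation theorem for weak solutions, which is true but much less elementary. Likewise, your mollification sketch leaves open the limit in which the singular sphere of the Poisson kernel on $B_{1-\eps}$ moves onto $\partial B_1$, where $w$ has only $H^s$ control.

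Replacing $B_1$ by $B_{3/4}$ removes all of this. Your kernel estimate then goes through verbatim, using $9/16-|x|^2\ge 5/16$ and $|x-y|\ge |y|/3$ for $|x|\le 1/2<3/4<|y|$.
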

\begin{proof}
Set $R:=3R_0/4$. Since $\overline{B_R(x_0)}\subset B_{R_0}(x_0)$, the Poisson
representation applies to $w$ and $W$ on $B_R(x_0)$. The Poisson kernel is
\[
 P_R(x,y):=p_{d,s}\frac{(R^2-|x-x_0|^2)^s}
 {(|y-x_0|^2-R^2)^s|x-y|^d},
 \quad x\in B_R(x_0),\quad y\notin\overline{B_R(x_0)},
\]
where $p_{d,s}:=\Gamma(d/2)\sin(\pi s)/\pi^{d/2+1}$; see \cite{Bucur2016}.
For $x\in B_{R_0/2}(x_0)$ and $y\notin\overline{B_R(x_0)}$, differentiation
and comparison with the kernel at $x_0$ give
\[
 |\nabla_xP_R(x,y)|\leq CR_0^{-1}P_R(x,y)
 \leq CR_0^{-1}P_R(x_0,y).
\]
Differentiating the Poisson representation and using $|w|\leq W$, we obtain
\[
 \begin{aligned}
 |\nabla w(x)|
 &\leq\int_{B_R(x_0)^c}|\nabla_xP_R(x,y)|\,|w(y)|\,\mathrm{d}y\\
 &\leq CR_0^{-1}\int_{B_R(x_0)^c}P_R(x_0,y)W(y)\,\mathrm{d}y
 =CR_0^{-1}W(x_0).
 \end{aligned}
\]
The last equality follows from the Poisson formula for $W$, proving the assertion.
\end{proof}

\subsection{Proofs of the local estimates}
We prove Lemmas~\ref{lem:local-L2}--\ref{lem:local-regularity-core} for $2s<d$,
following the order of their statements. The critical case $(s,d)=(1/2,1)$
is treated in Appendix~\ref{app:critical-case}.

\begin{proof}[Proof of Lemma~\ref{lem:local-L2} when $2s<d$]
For $|k-j|\geq2$, we have $|x-y|\simeq r_{j\wedge k}$ on
$\Omega_k\times\Omega_j$. Theorem~\ref{thm:Green-estimate}
(Green's function estimate) therefore gives
\[
 \|w_j\|_{L^2(\Omega_k)}
 \leq C r_k^{d/2}r_{j\wedge k}^{2s-d}\|v_j\|_{L^1(\Omega_j)} \leq C r_k^{d/2}r_{j\wedge k}^{2s-d}r_j^{d/2}\|v_j\|_{L^2(\Omega_j)}.
\]
For $|k-j|\leq1$, we have $|x-y|\leq3r_j$ on
$\Omega_k\times\Omega_j$. Then
Young's inequality yields
\[
 \|w_j\|_{L^2(\Omega_k)}
 \leq C\bigl\||\cdot|^{2s-d}\chi_{B_{3r_j}(0)}\bigr\|_{L^1(\RR^d)}
       \|v_j\|_{L^2(\Omega_j)} \leq C r_j^{2s}\|v_j\|_{L^2(\Omega_j)}.
\]
Since $r_k\simeq r_j$ for $|k-j|\leq1$, these bounds prove
\eqref{eq:L2-comp}.
Summing the squared bounds over $k\geq J>j$ and using
$\sum_{k\geq J}r_k^d\simeq r_J^d$ gives \eqref{eq:L2-in-core}.
\end{proof}

    \begin{proof}[Proof of Lemma~\ref{lem:local-regularity} when $2s<d$]
    We combine Lemma~\ref{lem:local-L2} (annular sources: $L^2$ estimates)
    with Lemma~\ref{lem:local-sobolev} (localized Sobolev regularity).
    In the inward regime, we also use boundary decay of Green's function in Theorem~\ref{thm:Green-estimate}
    or the interior gradient estimate in Lemma~\ref{lem:gradient-estimate}.

    \textbf{Case 1: outward regime.}
        For $k\leq j-2$, we have $\Omega_k'\cap\Omega_j=\varnothing$,
        so the source term in \eqref{eqn:local-sobolev-shift} vanishes.
        Decomposing the tail integral into annuli and applying Cauchy--Schwarz
        together with \eqref{eq:L2-comp}, we obtain
        \begin{equation}
            \begin{aligned}
                |w_j|_{H^{2s-\eps}(A_k)}
                &\leq C\eps^{-\xi}\biggl(r_k^{-2s}\|w_j\|_{L^2(A_k'')}
                +r_k^{d/2}\int_{\RR^d}\frac{|w_j(y)|}{r_k^{d+2s}+|y-x_0|^{d+2s}}\,\mathrm{d}y\biggr)\\
                &\leq C\eps^{-\xi}r_k^{d/2}\sum_{i=0}^{\infty}
                r_i^{d/2}\|w_j\|_{L^2(A_i)}r_{k\wedge i}^{-d-2s}\\
                &\leq C\eps^{-\xi}(r_jr_k)^{d/2} \|v_j\|_{L^2(\Omega_j)} \sum_{i=0}^{\infty}
                r_i^d r_{j\wedge i}^{2s-d}r_{k\wedge i}^{-d-2s}\\
                &\leq C\eps^{-\xi}\left(\frac{r_j}{r_k}\right)^{d/2}
                 \|v_j\|_{L^2(\Omega_j)} \sum_{i=0}^{\infty}2^{-2s|i-k|} \leq C\eps^{-\xi}\kappa_{jk}\|v_j\|_{L^2(\Omega_j)}.
            \end{aligned}
        \end{equation}
        Here, the local $L^2$ term is included in the sum since $r_i\simeq r_k$
        for $|i-k|\leq2$.
        Since $2s<d$ and $r_{j\wedge i}\geq r_i$, we also used
        \[
            \sum_{i=0}^{\infty}r_i^d r_{j\wedge i}^{2s-d}r_{k\wedge i}^{-d-2s}
            \leq r_k^{-d}\sum_{i=0}^{\infty}2^{-2s|i-k|}
            \leq C r_k^{-d}.
        \]

    \textbf{Case 2: intermediate regime.}
        For $j-1\leq k\leq j+3$, we have $\kappa_{jk}=1$.
        Theorem~\ref{thm:sobolev-shift-lip} (global Sobolev regularity) gives
        \[
            |w_j|_{H^{2s-\eps}(A_k)}
            \leq |w_j|_{H^{2s-\eps}(\RR^d)}
            \leq C\eps^{-\xi}\kappa_{jk}\|v_j\|_{L^2(\Omega_j)}.
        \]

    \textbf{Case 3: inward regime.}
    For $k\geq j+4$, we distinguish two cases according to $\delta(x_0)$.
    
    \fbox{Case 3-1: $\delta(x_0)\leq 8r_k$.}
    For $i\geq j+2$, we have $\delta(x)\leq C r_{k\wedge i}$ and
    $|x-y|\simeq r_j$ for $(x,y)\in\Omega_i\times\Omega_j$.
    Using Theorem~\ref{thm:Green-estimate} (Green's function estimate) for $i\geq j+2$ and
    \eqref{eq:L2-comp} for $i\leq j+1$, we obtain
    \[
        \|w_j\|_{L^2(A_i)}
        \leq C(r_jr_i)^{d/2}r_{j\wedge i}^{s-d}r_{k\wedge i}^{s}
        \|v_j\|_{L^2(\Omega_j)},\qquad i\geq0.
    \]
    Proceeding as in Case~1 gives
    \begin{equation}
        \begin{aligned}
            |w_j|_{H^{2s-\eps}(A_k)}
            &\leq C\eps^{-\xi}r_k^{d/2}\sum_{i=0}^{\infty}
            r_i^{d/2}\|w_j\|_{L^2(A_i)}r_{k\wedge i}^{-d-2s}\\
            &\leq C\eps^{-\xi}(r_jr_k)^{d/2}\|v_j\|_{L^2(\Omega_j)}
            \sum_{i=0}^{\infty}r_i^d r_{j\wedge i}^{s-d}r_{k\wedge i}^{-d-s}\\
            &\leq C\eps^{-\xi}\left(\frac{r_k}{r_j}\right)^{d/2-s}
            \|v_j\|_{L^2(\Omega_j)}
            =C\eps^{-\xi}\kappa_{jk}\|v_j\|_{L^2(\Omega_j)}.
        \end{aligned}
    \end{equation}
    Here we used $r_{j\wedge i}\geq r_j$ and $s<d$ to obtain
    \[
        \sum_{i=0}^{\infty}r_i^d r_{j\wedge i}^{s-d}r_{k\wedge i}^{-d-s}
        \leq r_j^{s-d}r_k^{-s}\sum_{i=0}^{\infty}2^{-s|i-k|}.
    \]

    \fbox{Case 3-2: $\delta(x_0)>8r_k$.}
    Set $R_0:=\frac12(\delta(x_0)\wedge r_j)>4r_k$. Then
    $B_{R_0}(x_0)\subset\Omega$ and
    $B_{R_0}(x_0)\cap\supp v_j=\varnothing$.
    Let $W_j$ solve \eqref{eqn:local-source-annuli} with source $|v_j|$.
    By comparison, $W_j\geq|w_j|$ in $\RR^d$, and both functions are
    $s$-harmonic in $B_{R_0}(x_0)$. Theorem~\ref{thm:Green-estimate} gives
    \[
        W_j(x_0)
        \leq C r_j^{2s-d}\left(1\wedge\frac{\delta(x_0)}{r_j}\right)^s
        \|v_j\|_{L^1(\Omega_j)}
        \leq C R_0^s r_j^{s-d/2}\|v_j\|_{L^2(\Omega_j)}.
    \]
    Since $s<1$, Lemma~\ref{lem:gradient-estimate} (interior gradient estimate) yields
    \begin{equation}
        \|\nabla w_j\|_{L^\infty(B_{R_0/2}(x_0))}
        \leq C R_0^{-1}W_j(x_0)
        \leq C r_k^{s-1}r_j^{s-d/2}\|v_j\|_{L^2(\Omega_j)}.
    \label{eqn:gradient-scaled}
    \end{equation}
    Since $A_k\subset B_{R_0/2}(x_0)$, the definition of the seminorm and
    $c_{d,t}/(1-t)\leq C$ give, uniformly for $0<t<1$,
    \[
        |w_j|_{H^t(A_k)}
        \leq C r_k^{d/2+1-t}
        \|\nabla w_j\|_{L^\infty(B_{R_0/2}(x_0))}.
    \]
    Taking $t=2s-\eps$ and using \eqref{eqn:gradient-scaled}, we obtain
    \begin{equation}
        |w_j|_{H^{2s-\eps}(A_k)}
        \leq C r_k^\eps\left(\frac{r_k}{r_j}\right)^{d/2-s}
        \|v_j\|_{L^2(\Omega_j)}
        \leq C \kappa_{jk}\|v_j\|_{L^2(\Omega_j)}.
    \end{equation}

    For $A_\ast$, use the core estimate in \eqref{eqn:local-sobolev-shift}
    at scale $r_J$. If $J\leq j+3$, the argument in Case~2 applies.
    If $J\geq j+4$ and $\delta(x_0)\leq8r_J$, the tail is estimated as in
    Case~3-1. For the local $L^2$ term, squaring and summing the bounds in
    Case~3-1 with $k=J$ over $i\geq J-2$ gives
    \[
        r_J^{-2s}\|w_j\|_{L^2(\Omega_\ast'')}
        \leq C r_J^{d/2-s}r_j^{s-d/2}\|v_j\|_{L^2(\Omega_j)}
        =C\kappa_{jJ}\|v_j\|_{L^2(\Omega_j)}.
    \]
    If $J\geq j+4$ and $\delta(x_0)>8r_J$, Case~3-2 applies with
    $A_k$ replaced by $B_{r_J}(x_0)$.
    In each case, we obtain \eqref{eqn:local-regularity-annuli-core}, completing the proof.
    \end{proof}

    \begin{proof}[Proof of Lemma~\ref{lem:local-L2-core} when $2s<d$]
    Recall that $r_J\geq2h$, so $r_k\geq4h$ for $k<J$.
    Since $T_0\subset\overline{B_h(x_0)}$,
    \eqref{eq:L2-out-comp-core} follows as in the proof of
    Lemma~\ref{lem:local-L2}, with the source scale $r_j$ replaced by $h$.

    For the core, set $J_c:=\lfloor-\log_2(2h)\rfloor$ and
    $\Omega_{\ast,c}:=\Omega\cap B_{r_{J_c}}(x_0)$. Then
    $J\leq J_c$, $2h\leq r_{J_c}<4h$, and
    \[
        \Omega_\ast=\bigcup_{k=J}^{J_c-1}\Omega_k\cup\Omega_{\ast,c}.
    \]
    Since $\bigl\||\cdot|^{2s-d}\chi_{B_{2r_{J_c}}(0)}\bigr\|_{L^1(\RR^d)}
    \leq C r_{J_c}^{2s}$, Young's inequality gives
    \[
        \|z\|_{L^2(\Omega_{\ast,c})}
        \leq C r_{J_c}^{2s}\|\varphi\|_{L^2(T_0)}
        \leq C h^{2s}\|\varphi\|_{L^2(T_0)}.
    \]
    Combining this with \eqref{eq:L2-out-comp-core}, applied with terminal
    index $J_c$, gives
    \[
    \begin{aligned}
        \|z\|_{L^2(\Omega_\ast)}^2
        &=\sum_{k=J}^{J_c-1}\|z\|_{L^2(\Omega_k)}^2
          +\|z\|_{L^2(\Omega_{\ast,c})}^2\\
        &\leq C\biggl(h^d\sum_{k=J}^{J_c-1}r_k^{4s-d}+h^{4s}\biggr)
          \|\varphi\|_{L^2(T_0)}^2.
    \end{aligned}
    \]
    Evaluating the sum for $d>4s$, $d=4s$, and $2s<d<4s$ proves
    \eqref{eq:L2-core-core}.
    \end{proof}

    \begin{proof}[Proof of Lemma~\ref{lem:local-regularity-core} when $2s<d$]
    Set $J_c:=\lfloor-\log_2(2h)\rfloor$. Since
    $T_0\subset\overline{B_h(x_0)}\subset\overline{B_{r_{J_c+1}}(x_0)}$,
    we may write $\varphi=\sum_{j\geq J_c+1}\varphi_j$ a.e., where
    $\varphi_j:=\varphi\chi_{\Omega_j}$. Let $w_j$ solve
    \eqref{eqn:local-source-annuli} with $v_j=\varphi_j$.
    By linearity and \eqref{eqn:Sobolev-shift-all},
    $z=\sum_{j\geq J_c+1}w_j$ in $H^{2s-\eps}(\RR^d)$.
    For each $j$, apply Lemma~\ref{lem:local-regularity}
    (annular sources: Sobolev estimates) with a terminal index larger than $j$.
    Recall that $r_J\geq2h$ implies $J\leq J_c$. For $k<J$ and
    $j\geq J_c+1$, we therefore have $k\leq j-2$ and
    $\kappa_{jk}\leq C(r_j/r_k)^{d/2}$. Cauchy--Schwarz then gives
    \[
    \begin{aligned}
    |z|_{H^{2s-\eps}(A_k)}
    &\leq C\eps^{-\xi}r_k^{-d/2}
      \sum_{j\geq J_c+1}r_j^{d/2}\|\varphi_j\|_{L^2(\Omega_j)}\\
    &\leq C\eps^{-\xi}r_k^{-d/2}
      \biggl(\sum_{j\geq J_c+1}r_j^d\biggr)^{1/2}\|\varphi\|_{L^2(T_0)}\\
    &\leq C\eps^{-\xi}(h/r_k)^{d/2}\|\varphi\|_{L^2(T_0)}.
    \end{aligned}
    \]
    The core estimate follows directly from \eqref{eqn:Sobolev-shift-all}.
    This proves the assertion.
    \end{proof}

\section{Boundary-corrected interpolation estimates}\label{sec:interpolation}
Fractional stability and approximation estimates for Scott--Zhang
interpolation \cite{ScottZhang1990} are available in
\cite{Ciarlet2013,AcostaBorthagaray2017}. Here we use element averages
and set boundary nodal values to zero on $\partial\Omega_h$.
We account for the boundary correction and the error on
$\Omega\setminus\Omega_h$ when $\Omega_h\subsetneq\Omega$, and derive
local estimates, including the boundary-weighted
term, on balls and annuli for the subsequent duality arguments.

Let $\mc N_h$ and $\mc N_h^\partial$ denote the vertices of $\mc T_h$
and those on $\partial\Omega_h$, respectively. For each $a\in\mc N_h$,
choose an element $S_a$ containing $a$ and its dual function
$\psi_a\in\mathbb P_1(S_a)$, characterized by
$\int_{S_a}\psi_a q\,\mathrm{d}x=q(a)$ for $q\in\mathbb P_1(S_a)$.
Set $\ell_a(v):=\int_{S_a}\psi_a v\,\mathrm{d}x$ and define
\begin{equation}\label{eq:Ih}
 J_hv:=\sum_{a\in\mc N_h}\ell_a(v)\varphi_a,
 \qquad
 I_hv:=\sum_{a\in\mc N_h\setminus\mc N_h^\partial}
          \ell_a(v)\varphi_a,
\end{equation}
where $\varphi_a\in\widehat V_h$ is the nodal basis function at $a$.
Thus $I_h:L^2(\Omega)\to V_h$ is local and reproduces affine functions
on elements whose vertices are interior nodes. For $T\in\mc T_h$,
let $\omega_T$ be the union of all elements sharing a vertex with $T$.

\begin{lemma}[boundary covering]\label{lem:caps-overlaps}
For all sufficiently small $h$ and every element $T$ with
$\omega_T\cap\partial\Omega_h\ne\varnothing$, there is a ball
$B_T\supset\omega_T$ such that
\begin{equation}\label{eqn:cap-properties}
 \diam B_T\le C(h+\delta_h),
 \qquad |B_T\cap\Omega^c|\geq C^{-1}(h+\delta_h)^d,
\end{equation}
\begin{equation}\label{eqn:cap-covering}
    \Omega\setminus\Omega_h \subset \bigcup_{T:\omega_T\cap\partial\Omega_h\neq\varnothing} B_T. 
\end{equation}
The balls have overlap multiplicity at most
\begin{equation}\label{eqn:geometric-overlap}
 M_h\le C\left(1+\frac{\delta_h}{h}\right)^d.
\end{equation}
Here $C=C(d,\gamma,C_{\mathrm{geo}},c_\Omega,r_\Omega)$.
\end{lemma}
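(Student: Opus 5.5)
For each element $T$ with $\omega_T\cap\partial\Omega_h\neq\varnothing$, I will build $B_T$ from a point of $\partial\Omega$ close to $T$ and then verify the three properties in order.

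\emph{Construction.} Fix such a $T$ and pick $p_T\in\omega_T\cap\partial\Omega_h$. By the definition of the Hausdorff distance in \eqref{eqn:mesh-domain-distance}, there is $q_T\in\partial\Omega$ with $|p_T-q_T|\le\delta_h$. Shape regularity and quasi-uniformity give $\diam\omega_T\le Ch$ with $C=C(d,\gamma)$. Define
\[
 B_T:=B_{\rho_T}(q_T),\qquad \rho_T:=K(h+\delta_h),
\]
where $K=K(d,\gamma,C_{\mathrm{geo}})$ will be fixed below. Choosing $K\ge C+1$ gives $\omega_T\subset B_{Ch+\delta_h}(q_T)\subset B_T$, and immediately $\diam B_T\le 2K(h+\delta_h)$.

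\emph{Measure bound.} Since $\delta_h\le C_{\mathrm{geo}}h$, we have $\rho_T\le K(1+C_{\mathrm{geo}})h$, which is at most $r_\Omega$ once $h$ is small enough. The exterior density condition \eqref{eqn:exterior-density} at $q_T\in\partial\Omega$ then yields $|B_T\cap\Omega^c|\ge c_\Omega\rho_T^d$, proving \eqref{eqn:cap-properties}.

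\emph{Covering.} This step is the only one that uses the second part of \eqref{eqn:mesh-domain-distance}, and I expect it to be the main obstacle. Let $x\in\Omega\setminus\Omega_h$. Then $\dist(x,\partial\Omega)\le C_{\mathrm{geo}}\delta_h$. Because $\Omega_h\subset\Omega$, the set $\partial\Omega_h$ separates $x\notin\overline{\Omega_h}$ from $\Omega_h$.

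I would find a point of $\partial\Omega_h$ near $x$ as follows. Choose $z\in\partial\Omega$ with $|x-z|\le C_{\mathrm{geo}}\delta_h$. The Hausdorff bound gives $p\in\partial\Omega_h$ with $|z-p|\le\delta_h$, so $|x-p|\le(1+C_{\mathrm{geo}})\delta_h$. The point $p$ lies on the boundary face of some element $T$, and then $p\in\omega_T\cap\partial\Omega_h$.

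For that $T$, the point $q_T$ satisfies $|q_T-p|\le|q_T-p_T|+|p_T-p|\le\delta_h+Ch$, because $p_T,p\in\omega_T$. Hence
\[
 |x-q_T|\le(2+C_{\mathrm{geo}})\delta_h+Ch,
\]
which is less than $\rho_T$ after enlarging $K$. This proves \eqref{eqn:cap-covering}. The subtle point is that the first condition in \eqref{eqn:mesh-domain-distance} alone allows $\Omega\setminus\Omega_h$ to reach far from $\partial\Omega_h$; the second condition supplies the proximity to $\partial\Omega$ that makes the chain work.

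\emph{Overlap.} Fix a point $y$ and suppose $y\in B_T$. Then $q_T\in B_{\rho}(y)$ with $\rho:=K(h+\delta_h)$. Since $|T-q_T|\le Ch+\delta_h\le\rho$, the element $T$ lies in $B_{3\rho}(y)$.

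Quasi-uniformity gives $|T|\ge c h^d$, and the elements have disjoint interiors. Therefore the number of such $T$ is at most $C\rho^d/h^d=C(1+\delta_h/h)^d$, which is \eqref{eqn:geometric-overlap}. All constants depend only on $d,\gamma,C_{\mathrm{geo}},c_\Omega,r_\Omega$.
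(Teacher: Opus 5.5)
Your proposal is correct and follows essentially the same argument as the paper. You center $B_T$ at a point of $\partial\Omega$ within $\delta_h$ of a point of $\omega_T\cap\partial\Omega_h$, use the exterior density condition for the measure bound, chain the two conditions in \eqref{eqn:mesh-domain-distance} for the covering, and count elements of volume at least $ch^d$ inside a ball of radius $C(h+\delta_h)$ for the overlap; your explicit treatment of the gap $|p-p_T|\le\diam\omega_T$ between the chosen and the found boundary points just spells out a step the paper leaves implicit.
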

\begin{proof}
Choose $z_T\in\omega_T\cap\partial\Omega_h$ and $y_T\in\partial\Omega$
with $|z_T-y_T|\leq\delta_h$, and set
$B_T:=B_{C(h+\delta_h)}(y_T)$. For sufficiently large $C$, this ball
contains $\omega_T$; the exterior density condition
\eqref{eqn:exterior-density} gives \eqref{eqn:cap-properties}.
For $x\in\Omega\setminus\Omega_h$, the mesh-domain assumptions provide
$y\in\partial\Omega$ and $z\in\partial\Omega_h$ with
$|x-y|\leq C_{\mathrm{geo}}\delta_h$ and $|y-z|\leq\delta_h$.
Choosing $T$ with $z\in\omega_T$ gives $|x-y_T|\leq C(h+\delta_h)$,
which proves \eqref{eqn:cap-covering}.
Finally, $x\in B_T$ implies $T\subset B_{C(h+\delta_h)}(x)$.
Disjoint element interiors and quasi-uniformity give
$\#\{T:x\in B_T\}\leq C(h+\delta_h)^d/h^d$, proving
\eqref{eqn:geometric-overlap}.
\end{proof}

\subsection{Poincar\'e inequalities}
We recall the following standard estimates without proof.
Here $\mathbb P_m$ denotes polynomials of total degree at most $m$.

\begin{lemma}[integer-order Poincar\'e inequalities]\label{lem:H1-Poincare}
Let $B:=B_r(x_0)$, $m\in\{1,2\}$ and $w\in H^m(B)$. Then
\begin{equation}\label{eqn:integer-Poincare-mod-polynomials}
 \inf_{p\in\mathbb P_{m-1}}
 \sum_{j=0}^{m-1}r^j|w-p|_{H^j(B)}
 \leq Cr^m|w|_{H^m(B)},
\end{equation}
with $C=C(d)$. If $w=0$ almost everywhere on a measurable set $E\subset B$ with
$|E|\geq\kappa|B|>0$, the same bound holds with $p=0$ and $C=C(d,\kappa)$.
\end{lemma}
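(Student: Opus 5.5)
The plan is to prove the $m=1$ and $m=2$ cases of \eqref{eqn:integer-Poincare-mod-polynomials} on the unit ball and transfer them to $B_r(x_0)$ by scaling. Then treat the vanishing-set variant by a compactness (Deny--Lions type) argument, or more directly by comparison with the optimal polynomial.

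\emph{Scaling.} Set $\hat w(\hat x):=w(x_0+r\hat x)$ on $\hat B:=B_1(0)$. Then $|w|_{H^j(B)}=r^{d/2-j}|\hat w|_{H^j(\hat B)}$, so $r^j|w-p|_{H^j(B)}=r^{d/2}|\hat w-\hat p|_{H^j(\hat B)}$. Hence it suffices to prove
\[
\inf_{p\in\mathbb P_{m-1}}\sum_{j<m}|\hat w-p|_{H^j(\hat B)}\le C|\hat w|_{H^m(\hat B)}
\]
with $C=C(d)$. Since $\hat B$ is a fixed convex (hence Lipschitz) domain, this follows from the Bramble--Hilbert/Deny--Lions lemma. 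Constructively, take $m=1$ with $p$ the mean of $\hat w$, and apply the standard Poincar\'e--Wirtinger inequality on the convex set $\hat B$ (the Payne--Weinberger type constant depends only on $d$). For $m=2$, first choose the linear part of $p$ so that $\nabla(\hat w-p)$ has zero mean, and apply Poincar\'e--Wirtinger to each $\partial_j\hat w$ to get $|\hat w-p|_{H^1}\le C|\hat w|_{H^2}$. Then adjust the constant of $p$ so that $\hat w-p$ has zero mean, and apply Poincar\'e--Wirtinger again for the $L^2$ term.

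\emph{Vanishing on $E$.} Again scale to $\hat B$. The set $\hat E$ then satisfies $|\hat E|\ge\kappa|\hat B|$. Let $p$ be the polynomial from the first part, so $\|\hat w-p\|_{H^{m-1}(\hat B)}\le C|\hat w|_{H^m(\hat B)}$. It remains to bound $p$ itself, since $\sum_{j<m}|\hat w|_{H^j}\le\sum_{j<m}|\hat w-p|_{H^j}+\|p\|_{H^{m-1}(\hat B)}$. Because $\hat w=0$ on $\hat E$,
\[
\|p\|_{L^2(\hat E)}=\|\hat w-p\|_{L^2(\hat E)}\le C|\hat w|_{H^m(\hat B)}.
\]

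The key step is a uniform norm equivalence on the finite-dimensional space $\mathbb P_{m-1}$:
\[
\|q\|_{H^{m-1}(\hat B)}\le C(d,\kappa)\|q\|_{L^2(F)}
\]
for every measurable $F\subset\hat B$ with $|F|\ge\kappa|\hat B|$. I would prove it by contradiction and compactness. Normalize $\|q_n\|_{H^{m-1}(\hat B)}=1$ with $\|q_n\|_{L^2(F_n)}\to0$. Pass to a subsequence with $q_n\to q$ uniformly on $\overline{\hat B}$ (finite dimension), so $\|q\|_{H^{m-1}(\hat B)}=1$. Choose $\chi_{F_n}\rightharpoonup g$ weak-$*$ in $L^\infty$, where $0\le g\le1$ and $\int g\ge\kappa|\hat B|$. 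Then $\int q^2 g=\lim\int q_n^2\chi_{F_n}=0$. Hence $q=0$ on the set $\{g>0\}$, which has positive measure. A nonzero polynomial vanishes only on a null set, so $q\equiv0$, a contradiction.

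Alternatively, I could use the explicit bound that the set $\{|q|<t\|q\|_{L^\infty(\hat B)}\}$ has small measure, via Remez-type inequalities. The compactness route is simpler and gives $C=C(d,\kappa)$. Combining this equivalence with the bound on $\|p\|_{L^2(\hat E)}$ yields the claim with $p=0$. The main (only nontrivial) point is this uniform equivalence over all admissible sets $E$, which the compactness argument handles.
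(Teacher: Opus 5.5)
Your proof is correct and complete. The paper gives no proof of this lemma; it is recalled ``without proof'' as a standard estimate. Your argument therefore supplies the omitted standard proof rather than competing with one.

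The steps all hold up:
\begin{enumerate}
\item Scaling to $B_1(0)$ is done correctly.
\item The mod-polynomial bound follows from Poincar\'e--Wirtinger, applied first to $\nabla\hat w$ to fix the linear part of $p$ and then to $\hat w-p$ to fix its constant.
\item In the compactness step, passing to a weak-$*$ limit $g$ of $\chi_{F_n}$ with $0\le g\le1$ and $\int g\ge\kappa|\hat B|$ is exactly what makes the constant uniform over all admissible sets $E$.
\end{enumerate}

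If you want explicit $\kappa$-dependence, you can bypass compactness.
\begin{itemize}
\item For $m=1$, the identity $|\bar w|\,|E|^{1/2}=\|w-\bar w\|_{L^2(E)}$ gives $\|w\|_{L^2(B)}\le(1+\kappa^{-1/2})\|w-\bar w\|_{L^2(B)}$.
\item For $m=2$, use that $\nabla w=0$ a.e.\ on $\{w=0\}$. Then each $\partial_jw$ also vanishes a.e.\ on $E$, so the $m=1$ zero-set bound applies to $w$ and to every $\partial_jw$.
\end{itemize}
This alternative relies on the level-set property of Sobolev gradients. Your route needs only that a nonzero polynomial vanishes on a null set, and it extends verbatim to any finite-dimensional polynomial space.
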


For $0<t\leq2$, write
\begin{equation}\label{eqn:interpolation-loss}
 \Lambda(t):=\begin{cases}
 t^{-1/2},&0<t\leq1,\\
 (t-1)^{-1/2},&1<t\leq2.
 \end{cases}
\end{equation}
The following fractional version uses the normalization \eqref{eq:slobo};
for $0<t<1$, see \cite[Proposition~2.4]{BianchiBrasco2022}.
For $1<t<2$, the estimates follow by combining the order-$(t-1)$
estimates for first derivatives with the integer-order Poincar\'e
inequality.

\begin{lemma}[fractional Poincar\'e inequalities]\label{lem:frac-Poincare}
Let $B:=B_r(x_0)$, $0<t<2$, $t\ne1$, and $m:=\lceil t\rceil$.
For $w\in H^t(B)$,
\begin{equation}\label{eqn:fractional-Poincare-mod-polynomials}
 \inf_{p\in\mathbb P_{m-1}}
 \sum_{j=0}^{m-1}r^{j-t}|w-p|_{H^j(B)}
 \leq C\Lambda(t)|w|_{H^t(B)},
\end{equation}
with $C=C(d)$. If $w=0$ almost everywhere on a measurable set $E\subset B$ with
$|E|\geq\kappa|B|>0$, the same bound holds with $p=0$ and $C=C(d,\kappa)$.
The constants are independent of $t,r,x_0$ and $E$; the integer endpoints
are covered by Lemma~\ref{lem:H1-Poincare}.
\end{lemma}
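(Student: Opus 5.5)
We argue by scaling and reduction to first-order averages, treating $0<t<1$ first and then bootstrapping to $1<t<2$. By translation and dilation $x\mapsto x_0+rx$ both sides scale the same way under the normalization \eqref{eq:slobo}: $|w|_{H^t(B_r)}$ picks up $r^{d/2-t}$ and $r^{j-t}|w|_{H^j(B_r)}$ picks up $r^{d/2-t}$. So it suffices to take $r=1$, $x_0=0$.

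\textbf{Case $0<t<1$, $m=1$.} Take $p=w_B:=|B|^{-1}\int_B w$. The identity
\[
\|w-w_B\|_{L^2(B)}^2=\frac1{2|B|}\int_B\int_B|w(x)-w(y)|^2\,\mathrm dx\,\mathrm dy
\]
together with $|x-y|\le2$ gives
\[
\|w-w_B\|_{L^2(B)}^2\le C_d\,c_{d,t}^{-1}|w|_{H^t(B)}^2 .
\]
Since $c_{d,t}\simeq_d t(1-t)$, this crude bound already yields the constant $C\,t^{-1/2}=C\Lambda(t)$ for $t\le1/2$.

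The range $1/2<t<1$ is the main obstacle. The naive constant degenerates like $(1-t)^{-1/2}$, while $\Lambda(t)$ stays bounded as $t\to1$; this is the Bourgain--Brezis--Mironescu regime. I would rely on \cite[Proposition~2.4]{BianchiBrasco2022} here, as the paper does. If a self-contained argument is wanted, I would run a dyadic telescoping of averages over sub-balls $B_{2^{-n}}(x)$. Each step $|w_{B_{2^{-n}}(x)}-w_{B_{2^{-n-1}}(x)}|$ is controlled by the Gagliardo energy restricted to $|x-y|\sim2^{-n}$. Summing the squared increments, weighted by $2^{-2nt}$, should recover $c_{d,t}$-weighted energy with a constant uniform as $t\to1$, because the factor $(1-t)$ in $c_{d,t}$ is compensated by $\sum_n2^{-2n(1-t)}\simeq(1-t)^{-1}$. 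I expect this bookkeeping to be the delicate step.

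\textbf{Case $1<t<2$, $m=2$.} Set $\sigma=t-1$ and apply the $t<1$ case to each $\partial_jw$. With $c:=(\nabla w)_B$ this gives
\[
\|\nabla w-c\|_{L^2(B)}\le C\Lambda(t)|w|_{H^t(B)} .
\]
Next apply the integer Poincar\'e inequality \eqref{eqn:integer-Poincare-mod-polynomials} with $m=1$ to $w-c\cdot x$. This produces a constant $b$ such that $\|w-b-c\cdot x\|_{L^2(B)}\le C\|\nabla w-c\|_{L^2(B)}$. With $p:=b+c\cdot x\in\mathbb P_1$, both $|w-p|_{H^0(B)}$ and $|w-p|_{H^1(B)}$ are bounded by $C\Lambda(t)|w|_{H^t(B)}$, which is \eqref{eqn:fractional-Poincare-mod-polynomials}.

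\textbf{Vanishing on $E$.} Let $p$ be the polynomial just found, so that $\|w-p\|_{L^2(B)}$ (and, if $m=2$, $|w-p|_{H^1(B)}$) is controlled. Since $w=0$ on $E$,
\[
\|p\|_{L^2(E)}=\|w-p\|_{L^2(E)}\le\|w-p\|_{L^2(B)} .
\]
For $p\in\mathbb P_1$ on the unit ball, a Remez-type inequality gives $\|p\|_{L^2(B)}\le C(d,\kappa)\|p\|_{L^2(E)}$ uniformly over sets with $|E|\ge\kappa|B|$; for constants this is simply $|E|^{-1/2}$. The inverse estimate $\|\nabla p\|_{L^2(B)}\le C\|p\|_{L^2(B)}$ on the finite-dimensional space $\mathbb P_1$ then bounds $|p|_{H^1(B)}$. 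The triangle inequality therefore lets us replace $p$ by $0$ at the cost of a constant $C(d,\kappa)$. None of these constants depend on $t$, $r$, $x_0$, or $E$ beyond $\kappa$.
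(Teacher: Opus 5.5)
Your proposal follows the paper's route: for $0<t<1$ you rely on \cite[Proposition~2.4]{BianchiBrasco2022}, with your mean-value bound already covering $t\le 1/2$; for $1<t<2$ you apply that estimate to $\nabla w$ at order $t-1$ (note $\Lambda(t-1)=\Lambda(t)$) and then the integer Poincar\'e inequality to $w-c\cdot x$; and your zero-set reduction via a Remez-type norm equivalence on $\mathbb P_1$ is fine. The one caveat concerns your optional self-contained sketch for $1/2<t<1$: telescoping averages only bounds the oscillation by scale-wise energies from above, so it cannot recover the factor $1-t$, and what actually makes $(1-t)\sum_n 2^{-2n(1-t)}\simeq 1$ work is the sharp doubling bound $E(2\rho)\le 2^{d+2}E(\rho)$ for $E(\rho):=\int_B\int_B\chi_{\{|x-y|<\rho\}}|w(x)-w(y)|^2\,\mathrm{d}x\,\mathrm{d}y$ (proved via midpoints and the convexity of $B$), which forces $E(2^{1-n})\geq 2^{-n(d+2)}E(2)$.
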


\subsection{Global interpolation estimates}

We retain the dependence on
$\delta_h/h$ explicitly. Constants may depend on $d$, the Lipschitz
character of $\Omega$, $c_\Omega$, $r_\Omega$, $C_{\mathrm{geo}}$ and $\gamma$.

\begin{lemma}[global integer-order interpolation estimates]\label{lem:global-endpoints}
The interpolation operator $I_h$ satisfies
\begin{equation}\label{eqn:L2-stability}
 \|I_hv\|_{L^2(\RR^d)}\le C\|v\|_{L^2(\Omega)},
 \qquad v\in L^2(\Omega),
\end{equation}
and
\begin{equation}\label{eqn:H1-stability}
 \|I_hv\|_{H^1(\RR^d)}
 \le C\left(1+\frac{\delta_h}{h}\right)^{1+d/2}
 \|v\|_{H^1(\RR^d)},
 \qquad v\in\widetilde H^1(\Omega)=H_0^1(\Omega).
\end{equation}
For $m\in\{1,2\}$ and $v\in\widetilde H^m(\Omega)=H_0^m(\Omega)$,
\begin{equation}\label{eqn:global-integer-approx}
 \|v-I_hv\|_{L^2(\RR^d)}+h|v-I_hv|_{H^1(\RR^d)}
 \le Ch^m\left(1+\frac{\delta_h}{h}\right)^{m+d/2}
 |v|_{H^m(\RR^d)}.
\end{equation}
The constants $C=C(d,\gamma,C_{\mathrm{geo}},c_\Omega,r_\Omega)$ are independent of $h$ and $\delta_h$.
\end{lemma}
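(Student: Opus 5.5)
The plan is to argue element by element, using the locality of $I_h$ and of the dual functionals $\ell_a$, and to handle the interior and boundary-adjacent elements separately.

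\textbf{$L^2$ stability.} For every node, $|\ell_a(v)|\le\|\psi_a\|_{L^2(S_a)}\|v\|_{L^2(S_a)}\le Ch^{-d/2}\|v\|_{L^2(S_a)}$ by scaling to the reference simplex. Hence on each $T$ we get $\|I_hv\|_{L^2(T)}\le C\|v\|_{L^2(\omega_T)}$. Since $I_hv$ vanishes outside $\Omega_h$ and the patches $\omega_T$ have bounded overlap, summing gives \eqref{eqn:L2-stability}.

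\textbf{Approximation on interior elements.} Fix $T$ with $\omega_T\cap\partial\Omega_h=\varnothing$. Then all vertices of $T$ are interior nodes, each $S_a\subset\omega_T$, and $I_hp=p$ on $T$ for every $p\in\mathbb P_1$. Applying Lemma~\ref{lem:H1-Poincare} on a ball of radius $\simeq h$ containing $\omega_T$, we write $v-I_hv=(v-p)-I_h(v-p)$. The local $L^2$ bound together with the inverse estimate $|I_hw|_{H^1(T)}\le Ch^{-1}\|I_hw\|_{L^2(T)}$ then gives
\[
\|v-I_hv\|_{L^2(T)}+h|v-I_hv|_{H^1(T)}\le Ch^m|v|_{H^m(B)},
\]
and these balls have bounded overlap.

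\textbf{Boundary elements.} These are the main difficulty. Let $T$ satisfy $\omega_T\cap\partial\Omega_h\ne\varnothing$, and take the ball $B_T$ from Lemma~\ref{lem:caps-overlaps}. Because $v\in\widetilde H^m(\Omega)$ vanishes on $B_T\cap\Omega^c$, and this set has measure at least $C^{-1}(h+\delta_h)^d\ge c\,|B_T|$ by \eqref{eqn:cap-properties}, the zero-set version of Lemma~\ref{lem:H1-Poincare} applies with $p=0$. Writing $\rho:=C(h+\delta_h)$, this gives
\[
\|v\|_{L^2(B_T)}+\rho|v|_{H^1(B_T)}\le C\rho^m|v|_{H^m(B_T)}.
\]
On $T$ we use the crude bound
\[
\|v-I_hv\|_{L^2(T)}+h|v-I_hv|_{H^1(T)}\le C\bigl(\|v\|_{L^2(\omega_T)}+h|v|_{H^1(T)}\bigr),
\]
which follows from local $L^2$ stability and the inverse estimate. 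Inserting the Poincar\'e bound yields
\[
Ch^m\Bigl(1+\frac{\delta_h}{h}\Bigr)^m|v|_{H^m(B_T)}.
\]

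\textbf{The strip $\Omega\setminus\Omega_h$.} There $I_hv=0$, so the error equals $v$. By \eqref{eqn:cap-covering} this set is covered by the balls $B_T$, and the same Poincar\'e bound controls it.

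\textbf{Summation and overlap.} Squaring and summing over the boundary balls costs the overlap factor $M_h\le C(1+\delta_h/h)^d$ from \eqref{eqn:geometric-overlap}. Taking square roots turns this into $(1+\delta_h/h)^{d/2}$, and combined with the factor $(1+\delta_h/h)^m$ above this produces the exponent $m+d/2$ in \eqref{eqn:global-integer-approx}.

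\textbf{$H^1$ stability.} Estimate \eqref{eqn:H1-stability} follows from the same argument with $m=1$. On interior elements we subtract constants; on boundary elements we bound $|I_hv|_{H^1(T)}\le Ch^{-1}\|v\|_{L^2(\omega_T)}$ and use the zero-set Poincar\'e bound $\|v\|_{L^2(B_T)}\le C(h+\delta_h)|v|_{H^1(B_T)}$. This gives the factor $(1+\delta_h/h)\,M_h^{1/2}$, i.e.\ the exponent $1+d/2$. Adding $\|I_hv\|_{L^2}$ from the $L^2$ stability completes the $H^1$ norm.
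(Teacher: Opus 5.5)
Your proposal is correct and follows essentially the same route as the paper. Both arguments use the local $L^2$/inverse bound from scaling and affine reproduction on interior elements (where the paper simply cites the Scott--Zhang interior estimate on $\omega_T$). Both treat the boundary elements and the strip $\Omega\setminus\Omega_h$ by the zero-set Poincar\'e inequality on the balls $B_T$, and then sum with the overlap factor $M_h$. The only cosmetic difference is that you derive \eqref{eqn:H1-stability} directly, whereas the paper obtains it from the case $m=1$ of \eqref{eqn:global-integer-approx} together with \eqref{eqn:L2-stability} and the triangle inequality.
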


\begin{proof}
Elementwise scaling gives the standard local bound
\begin{equation}\label{eqn:local-integer-stability}
 \|I_hv\|_{L^2(T)}+h\|\nabla I_hv\|_{L^2(T)}
 \leq C\|v\|_{L^2(\omega_T)}.
\end{equation}
Bounded patch overlap yields \eqref{eqn:L2-stability}.
For $m\in\{1,2\}$ and $\omega_T\cap\partial\Omega_h=\varnothing$,
$I_h=J_h$ on $T$, and the standard interior estimate
\cite[Section~4]{ScottZhang1990} gives
\[
 \|v-I_hv\|_{L^2(T)}+h|v-I_hv|_{H^1(T)}
 \leq Ch^m|v|_{H^m(\omega_T)}.
\]
If $\omega_T\cap\partial\Omega_h\ne\varnothing$, the ball $B_T$ from
Lemma~\ref{lem:caps-overlaps} contains a fixed proportion of $\Omega^c$,
where $v=0$. Lemma~\ref{lem:H1-Poincare} and
\eqref{eqn:local-integer-stability} therefore give
\[
 \begin{aligned}
 \|v-I_hv\|_{L^2(T)}+h|v-I_hv|_{H^1(T)}
 &\leq C\bigl(\|v\|_{L^2(B_T)}+h|v|_{H^1(B_T)}\bigr)\\
 &\leq C(h+\delta_h)^m|v|_{H^m(B_T)}.
 \end{aligned}
\]
On $\Omega\setminus\Omega_h$, the error equals $v$. The same Poincar\'e
estimate and the covering \eqref{eqn:cap-covering} yield
\[
 \|v\|_{L^2(\Omega\setminus\Omega_h)}^2
 +h^2\|\nabla v\|_{L^2(\Omega\setminus\Omega_h)}^2
 \leq C(h+\delta_h)^{2m}
 \sum_{T:\,\omega_T\cap\partial\Omega_h\ne\varnothing}|v|_{H^m(B_T)}^2.
\]
Summation with the overlap bound \eqref{eqn:geometric-overlap} proves
\eqref{eqn:global-integer-approx}. Its case $m=1$, together with
\eqref{eqn:L2-stability}, gives \eqref{eqn:H1-stability}.
\end{proof}

\begin{lemma}[global fractional-order interpolation estimates]\label{lem:global-fractional}
For $0<s<1$ and $v\in\widetilde H^s(\Omega)$,
\begin{equation}\label{eqn:Ht-stability}
 \|I_hv\|_{H^s(\RR^d)}
 \le C\left(1+\frac{\delta_h}{h}\right)^{s(1+d/2)}
 \|v\|_{H^s(\RR^d)}.
\end{equation}
For $0<s<1$, $s\le t\le2$, and $v\in\widetilde H^t(\Omega)$,
\begin{equation}\label{eq:Ht-global-approx}
 \begin{aligned}
 &|v-I_hv|_{H^s(\RR^d)}
  +h^{-s}\|v-I_hv\|_{L^2(\RR^d)} \le Ch^{t-s}
 \left(1+\frac{\delta_h}{h}\right)^{t(1+d/2)}
 \|v\|_{H^t(\RR^d)}.
 \end{aligned}
\end{equation}
The constants $C=C(d,\gamma,C_{\mathrm{geo}},c_\Omega,r_\Omega,\mathrm{Lip}_\Omega)$ are independent of $s$, $t$, $h$, and $\delta_h$, where
$\mathrm{Lip}_\Omega$ is the Lipschitz character of $\Omega$.
\end{lemma}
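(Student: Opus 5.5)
We prove Lemma~\ref{lem:global-fractional} by interpolating the integer-order endpoint estimates of Lemma~\ref{lem:global-endpoints} with the $K$-functional. The only extra ingredient is that, on $\widetilde H^t(\Omega)$ with $\Omega$ Lipschitz, the interpolation spaces coincide with the Slobodeckij spaces with uniform constants.

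\textbf{Stability.} For \eqref{eqn:Ht-stability}, $I_h$ is bounded $L^2(\Omega)\to L^2(\RR^d)$ with constant $C$ by \eqref{eqn:L2-stability}. By \eqref{eqn:H1-stability} it is bounded $\widetilde H^1(\Omega)\to H^1(\RR^d)$ with constant $C(1+\delta_h/h)^{1+d/2}$. Real interpolation with the $K$-method then gives a bound $\widetilde H^s(\Omega)=[L^2(\Omega),\widetilde H^1(\Omega)]_{s,2}\to H^s(\RR^d)$ with constant $C(1+\delta_h/h)^{s(1+d/2)}$.

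\textbf{Norm equivalence.} For the normalized seminorm \eqref{eq:slobo}, $H^s(\RR^d)=[L^2,H^1]_{s,2}$ with equivalence constants bounded uniformly in $s\in(0,1)$. This is where the factor $c_{d,s}\simeq s(1-s)$ enters; it can be checked on the Fourier side, since $|v|_{H^s(\RR^d)}^2=\int|\xi|^{2s}|\hat v|^2$ exactly with this normalization. On the zero-extension spaces $\widetilde H^s(\Omega)$ we need that $[L^2(\Omega),\widetilde H^1(\Omega)]_{s,2}$ coincides with $\widetilde H^s(\Omega)$ with uniform constants. I would obtain this from a bounded, uniform-in-$s$ operator $H^\sigma(\RR^d)\to\widetilde H^\sigma(\Omega)$ that is the identity on $\widetilde H^\sigma(\Omega)$ for $\sigma=0,1$ (respectively $1,2$). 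Such a retraction can be built by pushing functions inward along a Lipschitz transversal field using a partition of unity, which is where $\mathrm{Lip}_\Omega$ appears. Alternatively one can cite the known identification $\widetilde H^s=[L^2,H^1_0]_s$ for Lipschitz domains and track constants.

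\textbf{Approximation.} For \eqref{eq:Ht-global-approx}, set $E:=\mathrm{Id}-I_h$. First take $s\le t\le1$. By \eqref{eqn:L2-stability}, $E$ maps $L^2\to L^2$ with norm $C$. By \eqref{eqn:global-integer-approx} with $m=1$, it maps $\widetilde H^1(\Omega)\to L^2$ with norm $Ch(1+\delta_h/h)^{1+d/2}$ and $\widetilde H^1(\Omega)\to H^1$ with norm $C(1+\delta_h/h)^{1+d/2}$. Interpolating in the target between $L^2$ and $H^1$ at level $s$, with source $\widetilde H^1$, gives $E:\widetilde H^1\to H^s$ with norm $Ch^{1-s}(1+\delta_h/h)^{1+d/2}$. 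Interpolating in the source between $L^2\to L^2$ and $\widetilde H^1\to L^2$ gives the $L^2$ part at order $h^t$. For the $H^s$ seminorm with $t<1$, use $\widetilde H^s\to H^s$ (stability, from the previous step, with factor $(1+\delta_h/h)^{s(1+d/2)}$) and $\widetilde H^1\to H^s$. Interpolating between these at parameter $\theta=(t-s)/(1-s)$ gives $h^{t-s}$ with factor $(1+\delta_h/h)^{s(1+d/2)(1-\theta)+(1+d/2)\theta}\le(1+\delta_h/h)^{t(1+d/2)}$, since $s(1-\theta)+\theta\le t$ fails only if $\theta>0$, and then one bounds it crudely because $1+\delta_h/h\le C$ anyway by \eqref{eqn:mesh-domain-distance}. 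In fact $\delta_h\le C_{\mathrm{geo}}h$ makes every power of $(1+\delta_h/h)$ bounded, so the exact exponents matter only cosmetically. For $1\le t\le2$, interpolate between the $m=1$ and $m=2$ cases of \eqref{eqn:global-integer-approx}, with the target again taken at level $s$ between $L^2$ and $H^1$.

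\textbf{Main obstacle.} The main obstacle is the uniformity in $s,t$ of the interpolation-space identifications on $\widetilde H^\sigma(\Omega)$, including near $\sigma=1/2$ and across $\sigma=1$ for $H^{1+\sigma}$. I would handle this by proving directly a local version instead: an elementwise estimate $|v-I_hv|_{H^s}$ via Lemma~\ref{lem:frac-localize} at scale $r\simeq h+\delta_h$. The near-field terms on patches and on the boundary balls $B_T$ are bounded through Lemma~\ref{lem:frac-Poincare}, using that $v=0$ on the portion of $B_T\cap\Omega^c$ guaranteed by \eqref{eqn:cap-properties}. The Poincar\'e inequality has constant $\Lambda(t)$, and the uniformity then has to be recovered from the $\sqrt{t}$ behavior of the normalized seminorm. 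The local route is likely more robust and is the one I would fall back on if the interpolation identification proves delicate.
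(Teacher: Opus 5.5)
Your main argument is essentially the paper's proof. You interpolate \eqref{eqn:L2-stability}--\eqref{eqn:H1-stability} for stability, interpolate the target norms of \eqref{eqn:global-integer-approx} between $L^2$ and $H^1$, and then interpolate in the source between the $H^s$-stability of $I-I_h$ and the $m=1$ bound (for $t\le1$), or between the $m=1,2$ bounds (for $t\ge1$), with the uniform identification of the $\widetilde H^t(\Omega)$ scale taken from the literature (the paper cites Chandler-Wilde--Hewett--Moiola) rather than built by hand.

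Two small corrections. First, the exponent is exactly $s(1-\theta)+\theta=t$, so nothing needs to be absorbed using $\delta_h\le C_{\mathrm{geo}}h$. Second, your local fallback through Lemma~\ref{lem:frac-Poincare} would not give constants uniform in $s,t$, because the Poincar\'e factor $\Lambda(t)$ is a genuine loss for local seminorms (which is why Lemma~\ref{lem:local-fractional} carries it), so the interpolation route is the one that proves the lemma as stated.
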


\begin{proof}
Apply normalized real interpolation to the global integer-order
interpolation estimates in Lemma~\ref{lem:global-endpoints}; see
\cite[Section~2.1]{BorthagarayLeykekhmanNochetto2021} for the Sobolev
norm conventions. Since $\Omega$ is Lipschitz, the zero-extension
spaces $\widetilde H^t(\Omega)$ form an interpolation scale, with
norm-equivalence constants uniform for orders in $[0,2]$; see
\cite[Theorem~4.6 and Corollary~4.10]{ChandlerWildeHewettMoiola2015}.
Interpolation of \eqref{eqn:L2-stability} and
\eqref{eqn:H1-stability} gives \eqref{eqn:Ht-stability}.
For $m=0,1,2$ and $v\in\widetilde H^m(\Omega)$, we have
\[
 \|v-I_hv\|_{L^2(\RR^d)}
 \leq Ch^m\left(1+\frac{\delta_h}{h}\right)^{m(1+d/2)}
       \|v\|_{H^m(\RR^d)}.
\]
The case $m=0$ follows from $L^2$ stability and the triangle inequality;
the cases $m=1,2$ follow from \eqref{eqn:global-integer-approx}, using
$m+d/2\leq m(1+d/2)$.
Interpolating in the source spaces between orders $0$ and $1$, and
between $1$ and $2$, gives
\[
 \|v-I_hv\|_{L^2(\RR^d)}
 \leq Ch^t\left(1+\frac{\delta_h}{h}\right)^{t(1+d/2)}
       \|v\|_{H^t(\RR^d)},\qquad 0\leq t\leq2.
\]

Interpolating the target norms in \eqref{eqn:global-integer-approx}
gives error bounds from $\widetilde H^m(\Omega)$ to $H^s(\RR^d)$
for $m=1,2$.
For $s\leq t\leq1$, interpolate the $m=1$ bound with the $H^s$
stability of $I-I_h$ from \eqref{eqn:Ht-stability}.
For $1\leq t\leq2$, interpolate the $m=1,2$ bounds in the source spaces.
Together with the preceding $L^2$ estimate, this proves
\eqref{eq:Ht-global-approx}, with constants uniform in $s$ and $t$.
\end{proof}

\subsection{Local interpolation estimates}
Throughout this subsection, we use the assumption
$\delta_h\leq C_{\mathrm{geo}}h$ and absorb the factors involving
$\delta_h/h$ into constants depending on $C_{\mathrm{geo}}$, uniformly
for $0<t\leq2$.
We first prove the estimate on concentric balls.

\begin{lemma}[local interpolation on balls]\label{lem:local-fractional}
Let $q>1$, $x_0\in\RR^d$, and suppose that
\[
 \max\left\{2,\frac{8(1+C_{\mathrm{geo}})^2}{q-1}\right\}h
 \leq r\leq1.
\]
Write $B_r:=B_r(x_0)$ and $B_{qr}:=B_{qr}(x_0)$.
For $0<s<1$, $s\leq t\leq2$, and $v\in\widetilde H^t(\Omega)$,
\begin{equation}\label{eqn:local-fractional}
 \begin{aligned}
 &|v-I_hv|_{H^s(B_r)}
  +c_{d,s}^{1/2}\|(v-I_hv)\delta^{-s}\|_{L^2(B_r\cap\Omega)}
  +h^{-s}\|v-I_hv\|_{L^2(B_r)}\\
 &\qquad\leq C\Lambda(t)h^{t-s}|v|_{H^t(B_{qr})},
 \end{aligned}
\end{equation}
where $\Lambda(t)$ is defined in \eqref{eqn:interpolation-loss}.
The constant $C=C(d,q,\gamma,C_{\mathrm{geo}},c_\Omega,r_\Omega,\mathrm{Lip}_\Omega)$
is independent of $s$, $t$, $h$, $r$, $x_0$, and $v$.
\end{lemma}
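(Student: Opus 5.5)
The plan is to localize the global endpoint arguments of Lemma~\ref{lem:global-endpoints} element by element, bound the fractional seminorm on $B_r$ through the localization Lemma~\ref{lem:frac-localize}, and handle the boundary weight with Lemma~\ref{lem:delta-upper-bound} and Poincar\'e on the balls $B_T$. The condition on $r$ guarantees that every element $T$ meeting $B_r$ has $\omega_T\subset B_{(1+q)r/2}$. Likewise, every ball $B_T$ of Lemma~\ref{lem:caps-overlaps} meeting $B_r$, together with a concentric dilate by a fixed factor, lies in $B_{qr}$, since $\diam B_T\le C(1+C_{\mathrm{geo}})h\le (q-1)r/4$. Set $e:=v-I_hv$.

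\textbf{Step 1 (elementwise bounds).} For each $T$ meeting $B_r$, define a patch $D_T:=\omega_T$ if $\omega_T\cap\partial\Omega_h=\varnothing$ and $D_T:=B_T$ otherwise. For elements of $\Omega\setminus\Omega_h$, use the covering \eqref{eqn:cap-covering}. On each such set I will prove
\[
 \|e\|_{L^2(T)}+h|e|_{H^1(T)}\le C\Lambda(t)h^{t}|v|_{H^t(\widehat D_T)},
\]
where $\widehat D_T$ is a fixed enlargement of $D_T$, and similarly on each $B_T\cap(\Omega\setminus\Omega_h)$. In the interior case, the projection property of $I_h=J_h$ on $\mathbb P_1$ and \eqref{eqn:local-integer-stability} give $\|e\|_{L^2(T)}+h|e|_{H^1(T)}\le C\inf_p\bigl(\|v-p\|_{L^2(\omega_T)}+h|v-p|_{H^1(\omega_T)}\bigr)$ (the $H^1$ term only when $t\ge1$). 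Applying Lemma~\ref{lem:frac-Poincare} on a ball of radius $\simeq h$ containing $\omega_T$ then yields the factor $\Lambda(t)h^t$. Only $L^2$ appears for $t<1$, and then $h|e|_{H^1(T)}$ is replaced by an inverse estimate on $I_hv$ plus the $H^s$ norm of $v$ on $T$. In the boundary case, $v=0$ on a fixed fraction of $B_T$, so the $p=0$ version of Lemma~\ref{lem:frac-Poincare} applies.

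\textbf{Step 2 (fractional seminorm and $L^2$).} Squaring and summing with bounded overlap (Lemma~\ref{lem:caps-overlaps} with $\delta_h\le C_{\mathrm{geo}}h$) gives $h^{-s}\|e\|_{L^2(B_r)}\le C\Lambda(t)h^{t-s}|v|_{H^t(B_{qr})}$. For $|e|_{H^s(B_r)}$, cover $B_r$ by balls $B_h(x_i)$ of finite overlap and apply Lemma~\ref{lem:frac-localize}. Its $r^{-2s}$ term, with radius $h$, is the $L^2$ term just treated. Each local piece $|e|_{H^s(B_{2h}(x_i))}$ is bounded by interpolating $L^2$ and $H^1$ on that small ball, using $|w|_{H^s(B)}\le C\|w\|_{L^2(B)}^{1-s}|w|_{H^1(B)}^s+Ch^{-s}\|w\|_{L^2(B)}$, which holds uniformly in $s$ by the translation argument in the proof of Lemma~\ref{lem:frac-cutoff}. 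For $t<1$, where $H^1$ control of $v$ is unavailable, bound $|v-p|_{H^s}$ directly and use an inverse estimate $|I_hv-p|_{H^s(B)}\le Ch^{-s}\|I_hv-p\|_{L^2}$ with $p$ constant.

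\textbf{Step 3 (boundary weight).} This is the main obstacle: the weight $\delta^{-s}$ must be controlled uniformly in $s$, including the factor $c_{d,s}^{1/2}$. Elements with $\delta\gtrsim h$ on $T$ contribute $Ch^{-s}\|e\|_{L^2(T)}$, which is already bounded. The remaining set is covered by the balls $B_T$. There $e$ equals $v$ on $\Omega\setminus\Omega_h$ and $v-I_hv$ otherwise; in both cases it vanishes on a fixed fraction of $B_T$, namely on $\Omega^c$, where the zero extension of $e$ is zero.

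I would try the fractional Hardy-type bound
\[
 c_{d,s}\int_{B_T\cap\Omega}|e|^2\delta^{-2s}\le C c_{d,s}\int_{B_T\cap\Omega}|e(x)|^2\int_{\Omega^c}\frac{\mathrm{d}y}{|x-y|^{d+2s}}\,\mathrm{d}x,
\]
which follows from Lemma~\ref{lem:delta-upper-bound}. Since $e(y)=0$ on $\Omega^c$, the right-hand side is dominated by the exterior interaction part of $|e|^2_{H^s}$. Split $\Omega^c$ into $\Omega^c\cap 2B_T$, which is part of $|e|_{H^s(2B_T)}^2$ and handled as in Step~2, and $\Omega^c\setminus 2B_T$, which contributes $Cc_{d,s}s^{-1}h^{-2s}\|e\|_{L^2(B_T)}^2\le Ch^{-2s}\|e\|^2_{L^2(B_T)}$. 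Summing over $B_T$ with bounded overlap finishes the proof.

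The delicate points are keeping every constant free of $1/s$ and $1/(1-s)$, using $c_{d,s}\simeq s(1-s)$ exactly as in Lemma~\ref{lem:frac-cutoff}, and checking that all enlarged patches stay inside $B_{qr}$.
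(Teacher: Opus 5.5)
\textbf{Comparison of routes.} Your route differs from the paper's. The paper never estimates element by element. It sets $w:=\eta(v-p)$ with a cutoff $\eta=1$ on $B_{r+2h}(x_0)$ and $\supp\eta\subset B_{(1+q)r/2}(x_0)$. Here $p$ comes from the Poincar\'e inequalities, or $p=0$ when $\Omega^c$ fills a fixed fraction of $B_{qr}$. This gives $w\in\widetilde H^t(\Omega)$, $w-I_hw=v-I_hv$ on $B_r$, and $\|w\|_{H^t(\RR^d)}\le C\Lambda(t)|v|_{H^t(B_{qr})}$. The paper then applies the global estimate of Lemma~\ref{lem:global-fractional} to $w$. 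It reads off the weighted term from the exterior part of $|w-I_hw|_{H^s(\RR^d)}^2$ and the lower bound in \eqref{eqn:delta-upper-bound}. Your Step~3 is a localized version of this last observation and works. Your Steps~1--2 are sound for $t\ge1$, and so is the $L^2$ part for $t<1$. The point you flag as the main obstacle is not the real one.

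\textbf{The gap.} It lies in Step~2 for $s\le t<1$, where you bound $|v-p|_{H^s}=|v|_{H^s}$ on the small balls ``directly''. The natural direct bound on a ball $B$ of radius $2h$ is kernel comparison:
\[
 |v|_{H^s(B)}^2\le(4h)^{2(t-s)}\,\frac{c_{d,s}}{c_{d,t}}\,|v|_{H^t(B)}^2,
 \qquad
 \frac{c_{d,s}}{c_{d,t}}\simeq\frac{s(1-s)}{t(1-t)}.
\]
This constant is unbounded as $t\to1^-$ for fixed $s$. The statement, however, demands $C$ independent of $t$, with $\Lambda(t)=t^{-1/2}$ bounded near $t=1$. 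This is exactly the regime used later. Corollary~\ref{cor:dual-local-estimate} takes $s=1/2$ and $t=1-\eps$ with $\eps=\ell_h^{-1}$, so you would lose an extra factor $\ell_h^{1/2}$. Your elementwise $L^2$/$H^1$ machinery cannot repair this, because $v\notin H^1$.

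\textbf{How to repair it.} What is missing is a setting in which the normalized seminorms interpolate exactly, namely $\RR^d$, where $|f|_{H^\sigma(\RR^d)}=\||\xi|^\sigma\hat f\|_{L^2}$. Take $f:=\eta_B(v-\bar v)$, with $\eta_B$ a cutoff equal to $1$ on $B$ and supported in the concentric ball of radius $3h$. H\"older's inequality in frequency gives $|f|_{H^s(\RR^d)}\le\|f\|_{L^2}^{1-s/t}|f|_{H^t(\RR^d)}^{s/t}$. Lemmas~\ref{lem:frac-cutoff} and~\ref{lem:frac-Poincare} on the doubled ball then yield $|v|_{H^s(B)}\le C\Lambda(t)h^{t-s}|v|_{H^t(2B)}$ uniformly in $s$ and $t$. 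This is precisely the cutoff-plus-global-structure idea on which the paper's proof rests.

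\textbf{A smaller point.} The inequality $\diam B_T\le C(1+C_{\mathrm{geo}})h\le(q-1)r/4$ does not follow from the hypothesis. The constant of Lemma~\ref{lem:caps-overlaps} is not tied to $8(1+C_{\mathrm{geo}})^2$. You would need to fix the radius of $B_T$, for example as $3h+\delta_h$, and check your nested enlargements against the available margin $(q-1)r$.
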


\begin{proof}
We construct a localized function with the same interpolation error on
$B_r$, to which the global estimates of
Lemma~\ref{lem:global-fractional} can be applied.
Put $q_0:=(1+q)/2$. Every element meeting $B_r$, together with all its
vertex patches, is contained in $B_{r+2h}(x_0)$. The radius condition
therefore permits a cutoff $\eta\in C_0^\infty(B_{q_0r}(x_0))$ such that
\[
 0\leq\eta\leq1,\qquad \eta=1\text{ on }B_{r+2h}(x_0),\qquad
 \|\nabla^i\eta\|_{L^\infty}\leq C_qr^{-i}\quad(i=1,2),
\]
with $\dist(\supp\eta,B_{q_0r}(x_0)^c)\geq c_qr>0$.
Thus $\eta=1$ on all averaging elements used to define $I_hv$ on $B_r$.

If $B_{q_0r}(x_0)\subset\Omega_h$, choose a constant $p$ for $0<t\leq1$
and an affine polynomial $p$ for $1<t\leq2$ using
Lemmas~\ref{lem:H1-Poincare} and~\ref{lem:frac-Poincare}
(Poincar\'e inequalities) on this ball. All vertices of elements meeting
$B_r$ are interior nodes, so $I_hp=p$ there. Applying
Lemma~\ref{lem:frac-cutoff} (cutoff estimate) together with
Lemmas~\ref{lem:H1-Poincare} and~\ref{lem:frac-Poincare}
(Poincar\'e inequalities) gives
\[
 \|\eta(v-p)\|_{H^t(\RR^d)}
 \leq C\Lambda(t)|v|_{H^t(B_{q_0r}(x_0))}
 \leq C\Lambda(t)|v|_{H^t(B_{qr})}.
\]
Here the Poincar\'e inequalities control the lower-order terms arising
in the cutoff estimate; since $r\leq1$, they also control the full
$H^t$ norm by the same seminorm.

Otherwise, $B_{q_0r}(x_0)\cap\Omega_h^c \neq\varnothing$.
If $B_{q_0r}(x_0)\cap\Omega=\varnothing$, the error vanishes on $B_r$
and there is nothing to prove. Otherwise, we show that $\Omega^c$
occupies a fixed proportion of $B_{qr}$, so that the zero-set
Poincar\'e inequalities apply. The connectedness of the ball and
$\Omega_h\subset\Omega$ allow us to
choose $z\in B_{q_0r}(x_0)\cap(\overline\Omega\setminus\Omega_h)$.
The mesh-domain condition \eqref{eqn:mesh-domain-distance} gives
$\dist(z,\partial\Omega)\leq C_{\mathrm{geo}}\delta_h$; this also holds
when $z\in\partial\Omega$, since the distance is then zero.
Hence there exists $y\in\partial\Omega$ with
\[
 |y-x_0|<q_0r+C_{\mathrm{geo}}\delta_h
 \leq q_0r+\frac{q-1}{8}r,
 \qquad B_{(q-1)r/4}(y)\subset B_{qr}.
\]
The exterior density condition \eqref{eqn:exterior-density} and $r\leq1$ imply
\[
 |B_{qr}\cap\Omega^c|
 \geq c_\Omega\min\{(q-1)r/4,r_\Omega\}^d
 \geq c_\Omega\min\{(q-1)/4,r_\Omega\}^d r^d.
\]
Since $v=0$ on this set, the zero-set versions of
Lemmas~\ref{lem:H1-Poincare} and~\ref{lem:frac-Poincare}
(Poincar\'e inequalities), now on $B_{qr}$, combined with
Lemma~\ref{lem:frac-cutoff} (cutoff estimate), yield
\[
 \|\eta v\|_{H^t(\RR^d)}\leq C\Lambda(t)|v|_{H^t(B_{qr})}.
\]
Set $p=0$ in this case.

In both cases $w:=\eta(v-p)$ belongs to $\widetilde H^t(\Omega)$.
Since $\eta=1$ on the averaging elements and $I_hp=p$ on $B_r$
(trivially when $p=0$),
\[
 w-I_hw=(v-p)-I_h(v-p)=v-I_hv\qquad\text{on }B_r.
\]
To control the boundary-weighted term, use $w-I_hw=0$ in $\Omega^c$
and the exterior contribution to its fractional energy:
\[
 \begin{aligned}
 |w-I_hw|_{H^s(\RR^d)}^2
 &\geq c_{d,s}\int_{B_r\cap\Omega}|v-I_hv|^2
       \int_{\Omega^c}|x-y|^{-d-2s}\,\mathrm{d}y\mathrm{d}x\\
 &\geq c\,c_{d,s}\|(v-I_hv)\delta^{-s}\|_{L^2(B_r\cap\Omega)}^2.
 \end{aligned}
\]
The second inequality follows from the lower bound
\eqref{eqn:delta-upper-bound} in Lemma~\ref{lem:delta-upper-bound}
(exterior kernel bound). Since $\diam\Omega\leq1$, its proof allows
$c=c_\Omega2^{-d-2}\min\{1,r_\Omega\}^d$, uniformly for $0<s<1$.
We now apply Lemma~\ref{lem:global-fractional} (global fractional-order
interpolation estimates) to $w$, using $\delta_h/h\leq C_{\mathrm{geo}}$ to absorb
the geometric factor. Together with restriction to $B_r$ and the
preceding bounds, this gives
\[
 \begin{aligned}
 &|v-I_hv|_{H^s(B_r)}
  +c_{d,s}^{1/2}\|(v-I_hv)\delta^{-s}\|_{L^2(B_r\cap\Omega)}
  +h^{-s}\|v-I_hv\|_{L^2(B_r)}\\
 &\qquad\leq C\bigl(|w-I_hw|_{H^s(\RR^d)}
                  +h^{-s}\|w-I_hw\|_{L^2(\RR^d)}\bigr)\\
 &\qquad\leq Ch^{t-s}\|w\|_{H^t(\RR^d)}\leq C\Lambda(t)h^{t-s}|v|_{H^t(B_{qr})},
 \end{aligned}
\]
which proves \eqref{eqn:local-fractional}.
\end{proof}

A finite ball covering gives the corresponding annular estimates.
\begin{corollary}[annular and core interpolation estimates]\label{cor:local-interp-estimate}
Suppose $r_J\geq64(1+C_{\mathrm{geo}})^2h$.
For $0\leq k<J$, $0<s<1$, $s\leq t\leq2$, and
$v\in\widetilde H^t(\Omega)$,
\begin{equation}\label{eqn:local-interp}
 \begin{aligned}
 &|v-I_hv|_{H^s(A_k)}
  +c_{d,s}^{1/2}\|(v-I_hv)\delta^{-s}\|_{L^2(\Omega_k)}
  +h^{-s}\|v-I_hv\|_{L^2(\Omega_k)}\\
 &\qquad\leq C\Lambda(t)h^{t-s}|v|_{H^t(A_k')},
 \end{aligned}
\end{equation}
\begin{equation}\label{eqn:local-interp-core}
 \begin{aligned}
 &|v-I_hv|_{H^s(A_\ast)}
  +c_{d,s}^{1/2}\|(v-I_hv)\delta^{-s}\|_{L^2(\Omega_\ast)}
  +h^{-s}\|v-I_hv\|_{L^2(\Omega_\ast)}\\
 &\qquad\leq C\Lambda(t)h^{t-s}|v|_{H^t(A_\ast')}.
 \end{aligned}
\end{equation}
Here $\Lambda(t)$ is defined in \eqref{eqn:interpolation-loss}.
The constant $C=C(d,\gamma,C_{\mathrm{geo}},c_\Omega,r_\Omega,\mathrm{Lip}_\Omega)$
is independent of $s,t,h,x_0,k,J,v$.
The same estimates hold with $(A_k,A_k')$ replaced by
$(A_k^{(p)},A_k^{(p+1)})$ and $\Omega_k$ by $\Omega_k^{(p)}$,
and likewise for the core regions, provided
$r_{J+p}\geq64(1+C_{\mathrm{geo}})^2h$, with constants depending
additionally on $p\in\NN_+$.
\end{corollary}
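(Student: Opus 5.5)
The plan is to derive Corollary~\ref{cor:local-interp-estimate} from Lemma~\ref{lem:local-fractional} by covering $A_k$ (respectively $A_\ast$) with balls of radius comparable to $r_k$ (respectively $r_J$) whose enlargements stay inside $A_k'$ (respectively $A_\ast'$). Then we sum the ball estimates, and use Lemma~\ref{lem:frac-localize} to reassemble the fractional seminorm on $A_k$.

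\emph{Annular case.} Fix $0\leq k<J$ and set $\rho:=r_k/16$. Cover $A_k$ by balls $B_\rho(x_i)$, $i=1,\dots,N$, with centers $x_i\in A_k$ chosen as a maximal $\rho$-separated set. Then $N\leq C_d$ and each point lies in at most $C_d$ of the balls $B_{4\rho}(x_i)$. The balls satisfy $B_{4\rho}(x_i)\subset A_k'$, since $\dist(A_k,(A_k')^c)\geq r_{k+2}=r_k/4=4\rho$.

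We apply Lemma~\ref{lem:local-fractional} with radius $r=2\rho$ and $q=2$. The radius condition requires $\max\{2,8(1+C_{\mathrm{geo}})^2\}h\leq r_k/8$. This follows from $r_k\geq r_J\geq64(1+C_{\mathrm{geo}})^2h$, which is exactly the hypothesis, and $r\leq1$ holds trivially. The lemma gives, on each $B_{2\rho}(x_i)$, a bound for the $H^s$ seminorm, the weighted term, and $h^{-s}$ times the $L^2$ norm, all by $C\Lambda(t)h^{t-s}|v|_{H^t(B_{4\rho}(x_i))}$.

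For the two $L^2$-type terms we sum the squares over $i$. The sets $\Omega_k\cap B_\rho(x_i)$ cover $\Omega_k$, and the finite overlap gives $\sum_i|v|^2_{H^t(B_{4\rho}(x_i))}\leq C|v|^2_{H^t(A_k')}$. This step requires superadditivity of the Gagliardo seminorm over disjoint pieces, applied to a partition of $A_k'$ into at most $C_d$ families of pairwise disjoint balls. For $t\in\{1,2\}$ it is immediate. For $1<t<2$ the integrand involves $\nabla v$ and the same argument works.

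For the seminorm on $A_k$ we use Lemma~\ref{lem:frac-localize} with $D=A_k$ and radius $\rho$:
\[
|v-I_hv|^2_{H^s(A_k)}\leq\sum_i|v-I_hv|^2_{H^s(B_{2\rho}(x_i))}+C_d\rho^{-2s}\|v-I_hv\|^2_{L^2(A_k)}.
\]
The last term is handled as follows. Since $\rho\geq h$, we have $\rho^{-2s}\leq h^{-2s}$, so it is dominated by $h^{-2s}\|v-I_hv\|^2_{L^2(A_k)}$, which we have already bounded. Note that $v-I_hv$ vanishes outside $\Omega$, because $I_hv$ is supported in $\Omega_h\subset\Omega$; hence the $L^2$ norms over $A_k$ and $\Omega_k$ agree. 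All constants are uniform in $s,t$, since Lemma~\ref{lem:local-fractional} and Lemma~\ref{lem:frac-localize} are.

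\emph{Core case.} For $A_\ast=B_{r_J}(x_0)$ we could apply Lemma~\ref{lem:local-fractional} directly on the concentric ball with $q=2$, using $A_\ast'=B_{2r_J}(x_0)$. The radius condition again follows from $r_J\geq64(1+C_{\mathrm{geo}})^2h$. Alternatively, we can use the same covering argument with $\rho=r_J/16$.

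\emph{Enlarged regions.} For $(A_k^{(p)},A_k^{(p+1)})$ the argument is identical with $\rho=r_{k+p}/16$. The number of balls is now bounded by $C_{d,p}$, and the stated condition $r_{J+p}\geq64(1+C_{\mathrm{geo}})^2h$ ensures the radius requirement.

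The main point needing care is the bookkeeping that keeps all enlarged balls inside $A_k'$ while satisfying the radius condition of Lemma~\ref{lem:local-fractional}; the constant $64$ is designed for exactly this. The finite-overlap superadditivity of the $H^t$ seminorm is routine.
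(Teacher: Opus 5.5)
Your proposal is correct and matches the paper's proof essentially step for step. You cover by balls of radius $r_k/16$, apply Lemma~\ref{lem:local-fractional} with $r=r_k/8$ and $q=2$ (whose radius condition is exactly $r_k\geq64(1+C_{\mathrm{geo}})^2h$), sum squares, reassemble the seminorm via Lemma~\ref{lem:frac-localize} using $\rho^{-2s}\leq h^{-2s}$, apply the ball lemma directly on the core, and use radius $r_{k+p}/16$ for enlarged annuli. The only differences are minor: your finite-overlap superadditivity step is more than needed, since monotonicity of the seminorm in the domain together with $N\leq C_d$ already suffices, and the enlarged core $A_\ast^{(p)}$, which you do not treat explicitly, follows by the same covering with balls of radius $r_{J+p}/16$, as in the paper.
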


\begin{proof}
Set $\tilde r_k:=r_k/16$ and cover $\overline{A_k}$ by balls
$B_{\tilde r_k}(x_i)$, $x_i\in\overline{A_k}$, $1\leq i\leq N$, with $N\leq C_d$.
For every $i$,
\[
 B_{4\tilde r_k}(x_i)\subset
 \{x:r_k/4<|x-x_0|<5r_k/4\}\subset A_k'.
\]
Apply Lemma~\ref{lem:local-fractional} (local interpolation on balls)
with $r=2\tilde r_k$ and $q=2$.
Its radius condition follows from the assumed bound on $r_J$.
Summing the squared estimates controls the weighted term and
$h^{-s}\|v-I_hv\|_{L^2(A_k)}$ by the right-hand side of
\eqref{eqn:local-interp}. For the fractional seminorm, interactions
not captured by the local seminorms are controlled by
Lemma~\ref{lem:frac-localize} (localization), which gives
\[
 \begin{aligned}
 |v-I_hv|_{H^s(A_k)}^2
 &\leq\sum_{i=1}^N|v-I_hv|_{H^s(B_{2\tilde r_k}(x_i))}^2
       +C\tilde r_k^{-2s}\|v-I_hv\|_{L^2(A_k)}^2\\
 &\leq C\Lambda(t)^2h^{2(t-s)}|v|_{H^t(A_k')}^2,
 \end{aligned}
\]
since $h\leq\tilde r_k$ and $N\leq C_d$. This proves
\eqref{eqn:local-interp}. For the core, apply
Lemma~\ref{lem:local-fractional} (local interpolation on balls) directly
with $r=r_J$ and $q=2$.
For $A_k^{(p)}$, repeat the covering with $\tilde r_{k+p}:=r_{k+p}/16$;
the balls $B_{4\tilde r_{k+p}}(x_i)$ lie in $A_k^{(p+1)}$ and their number is
bounded in terms of $d,p$. For $A_\ast^{(p)}$, use
$\tilde r_{J+p}:=r_{J+p}/16$ in the same covering argument. Its enlarged balls lie
in $A_\ast^{(p+1)}$, and the stronger radius condition ensures that
Lemma~\ref{lem:local-fractional} (local interpolation on balls) applies
to every covering ball.
\end{proof}

Combining Corollary~\ref{cor:local-interp-estimate} with the local regularity
estimates in Section~\ref{sec:regularity} gives the following bounds for
the two dual problems.
\begin{corollary}[interpolation estimates for dual solutions]\label{cor:dual-local-estimate}
Let $0<s\leq1/2$ and
$r_J\geq64(1+C_{\mathrm{geo}})^2h$.
For $w_j$ defined in \eqref{eqn:local-source-annuli} and $0\leq j,k<J$,
\begin{equation}\label{eqn:dual-local-interp}
 \begin{aligned}
 &|w_j-I_hw_j|_{H^s(A_k)}
  +c_{d,s}^{1/2}\|(w_j-I_hw_j)\delta^{-s}\|_{L^2(\Omega_k)}
  +h^{-s}\|w_j-I_hw_j\|_{L^2(\Omega_k)}\\
 &\qquad\leq Ch^s\ell_h^\xi\kappa_{jk}\|v_j\|_{L^2(\Omega_j)},
 \end{aligned}
\end{equation}
\begin{equation}\label{eqn:dual-local-interp-core}
 \begin{aligned}
 &|w_j-I_hw_j|_{H^s(A_\ast)}
  +c_{d,s}^{1/2}\|(w_j-I_hw_j)\delta^{-s}\|_{L^2(\Omega_\ast)}
  +h^{-s}\|w_j-I_hw_j\|_{L^2(\Omega_\ast)}\\
 &\qquad\leq Ch^s\ell_h^\xi\kappa_{jJ}\|v_j\|_{L^2(\Omega_j)}.
 \end{aligned}
\end{equation}
For $z$ defined in \eqref{eqn:local-source} and $0\leq k<J$,
\begin{equation}\label{eqn:dual-core-local-interp}
 \begin{aligned}
 &|z-I_hz|_{H^s(A_k)}
  +c_{d,s}^{1/2}\|(z-I_hz)\delta^{-s}\|_{L^2(\Omega_k)}
  +h^{-s}\|z-I_hz\|_{L^2(\Omega_k)}\\
 &\qquad\leq Ch^s\ell_h^\xi(h/r_k)^{d/2}\|\varphi\|_{L^2(T_0)},
 \end{aligned}
\end{equation}
\begin{equation}\label{eqn:dual-core-local-interp-core}
 \begin{aligned}
 &|z-I_hz|_{H^s(A_\ast)}
  +c_{d,s}^{1/2}\|(z-I_hz)\delta^{-s}\|_{L^2(\Omega_\ast)}
  +h^{-s}\|z-I_hz\|_{L^2(\Omega_\ast)}\\
 &\qquad\leq Ch^s\ell_h^\xi\|\varphi\|_{L^2(T_0)}.
 \end{aligned}
\end{equation}
Here $\xi$ and $\kappa_{jk}$ are defined in \eqref{eqn:xi-alpha} and
\eqref{eqn:kappa}, respectively.
The constant $C=C(d,s,\Omega,\gamma,C_{\mathrm{geo}})$ is independent
of $h,x_0,j,k,J,v_j,\varphi$; its domain dependence is as in the preceding
interpolation and local regularity estimates.
The estimates also hold with $(A_k,\Omega_k)$ replaced by
$(A_k^{(p)},\Omega_k^{(p)})$, and likewise for the core regions, for each
fixed $p\in\NN_+$, provided $r_{J+p}\geq64(1+C_{\mathrm{geo}})^2h$,
with constants depending additionally on $p$.
\end{corollary}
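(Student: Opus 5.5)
The plan is to apply Corollary~\ref{cor:local-interp-estimate} with $t=2s-\eps$ to $v=w_j$ and $v=z$. This is legitimate: $w_j,z\in\widetilde H^{2s-\eps}(\Omega)$ by Theorem~\ref{thm:sobolev-shift-lip}, and $s\le 2s-\eps\le 2$ provided $\eps\le s$. We then bound the right-hand side $|v|_{H^{2s-\eps}(A_k')}$ by Lemmas~\ref{lem:local-regularity} and~\ref{lem:local-regularity-core}. The choice of $\eps$ is as follows. For $s<1/2$ we take $\eps=0$, so $t=2s$ and $h^{t-s}=h^s$. For $s=1/2$ we take $\eps:=\min\{\ell_h^{-1},s/2\}$, so $t=1-\eps$ and $h^{t-s}=h^{s}h^{-\eps}$. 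Since $h^{-\eps}=e^{\eps|\log h|}\le e$, this gives $h^{t-s}\le Ch^s$, while $\eps^{-\xi}\le C\ell_h$. In both cases $t\in[s,1)$, so $\Lambda(t)=t^{-1/2}\le s^{-1/2}$ is a constant depending only on $s$.

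For the annular estimates we need $|w_j|_{H^{2s-\eps}(A_k')}$. Write $A_k'=A_{k-1}\cup A_k\cup A_{k+1}$ and use the fact, remarked after Lemma~\ref{lem:local-regularity-core}, that the Sobolev estimates hold on $A_k^{(p)}$ with constants depending on $p$. Alternatively, one can bound the seminorm on the union by the sum of the seminorms on the pieces plus a cross-interaction term controlled by Lemma~\ref{lem:frac-localize}, but the remark is cleaner. For $p=1$ this gives $|w_j|_{H^{2s-\eps}(A_k')}\le C\eps^{-\xi}\kappa_{jk}\|v_j\|_{L^2(\Omega_j)}$, because $\kappa_{j,k\pm1}\simeq\kappa_{jk}$ with constant $2^{d}$. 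Several indices need separate care:
\begin{itemize}
\item When $k=J-1$, $A_k'$ reaches into $A_J\subset A_\ast$. This is handled by the core estimate \eqref{eqn:local-regularity-annuli-core}, using $\kappa_{jJ}\simeq\kappa_{j,J-1}$.
\item When $k=0$, the piece $A_{-1}$ lies outside $B_1(x_0)\supset\Omega$. There $w_j$ still lives (it vanishes only on $\Omega^c$ in the zero-extended sense, i.e.\ $w_j=0$ on $A_{-1}$, since $A_{-1}\subset\Omega^c$), so only the interaction with $A_0$ matters. That interaction is covered by the enlarged-region version of the lemma.
\item For the core, $A_\ast'=B_{r_{J-1}}(x_0)$, and the enlarged core version of \eqref{eqn:local-regularity-annuli-core} gives the factor $\kappa_{jJ}$.
\end{itemize}
Combining these with the first paragraph yields \eqref{eqn:dual-local-interp} and \eqref{eqn:dual-local-interp-core}, with the factor $h^s\ell_h^\xi$.

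For $z$ the argument is identical, with Lemma~\ref{lem:local-regularity-core} in place of Lemma~\ref{lem:local-regularity}. The factor $(h/r_{k\pm1})^{d/2}\simeq(h/r_k)^{d/2}$ is stable under the index shift, and the core bound is uniform. The hypothesis $r_J\ge 2h$ of Lemma~\ref{lem:local-regularity-core} follows from $r_J\ge64(1+C_{\mathrm{geo}})^2h$. For the $A_\ast'=B_{r_{J-1}}$ bound we use \eqref{eqn:Sobolev-shift-all} directly. The variants on $A_k^{(p)}$ follow in the same way: apply the $(A_k^{(p)},A_k^{(p+1)})$ form of Corollary~\ref{cor:local-interp-estimate} together with the $p+1$ enlargement of the regularity lemmas. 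The stronger radius hypothesis $r_{J+p}\ge64(1+C_{\mathrm{geo}})^2h$ is exactly what that corollary requires.

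The main obstacle is the critical case $s=1/2$. There one must confirm that the choice $\eps\simeq\ell_h^{-1}$ is compatible with all the constants: the regularity lemmas are uniform in $\eps$ apart from $\eps^{-\xi}$, and $\Lambda(1-\eps)$ stays bounded because $t<1$ and we use the branch $t^{-1/2}$. One must also check that $h^{-\eps}$ is bounded, which it is since $\eps|\log h|\le 1$. The remaining index-shift bookkeeping at $k\in\{0,J-1\}$ is routine.
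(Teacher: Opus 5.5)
Your proposal is correct and follows the paper's proof. Both apply Corollary~\ref{cor:local-interp-estimate} with $t=2s-\eps$ and bound the right-hand sides by the enlarged-region versions of Lemmas~\ref{lem:local-regularity} and~\ref{lem:local-regularity-core}, with $\Lambda(t)h^{s-\eps}\eps^{-\xi}\le Ch^s\ell_h^\xi$. The paper takes $\eps=\min\{s/2,\ell_h^{-1}\}$ for all $s$, and your choice $\eps=0$ for $s<1/2$ works equally well.

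One small caution on the index bookkeeping, which the remark on enlarged regions already covers. For $z$ at $k=J-1$, do not bound the piece $A_J\subset A_{J-1}'$ by the core estimate, as you did for $w_j$. That estimate lacks the factor $(h/r_J)^{d/2}$. Use instead the annular element-source bound at index $J$, which is valid with terminal index $J+1$ since $r_{J+1}\ge 2h$.
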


\begin{proof}
Take $\eps:=\min\{s/2,\ell_h^{-1}\}$ and $t:=2s-\eps$.
Theorem~\ref{thm:sobolev-shift-lip} (global Sobolev regularity) gives
$w_j,z\in\widetilde H^t(\Omega)$. Since $s\leq t<1$,
$\Lambda(t)\leq s^{-1/2}$. Together with $h^{-\eps}\leq e$ and
$\eps^{-\xi}\leq C\ell_h^\xi$, this yields
\[
 \Lambda(t)h^{s-\eps}\eps^{-\xi}\leq Ch^s\ell_h^\xi.
\]
Applying
Corollary~\ref{cor:local-interp-estimate} (annular and core interpolation
estimates) and using the enlarged-region versions of
Lemma~\ref{lem:local-regularity} (annular sources: Sobolev estimates) and
Lemma~\ref{lem:local-regularity-core} (element sources: Sobolev estimates),
we obtain all four estimates and their enlarged-region versions from
the preceding bound.
\end{proof}

\section{Annular error estimates for the dual problem}\label{sec:annular-error}

For the annular error analysis, we assume $0<s\leq1/2$ and $d\geq1$, including the critical
case $(s,d)=(1/2,1)$. The regularity estimates for this case are proved in
Appendix~\ref{app:critical-case} and enter the analysis through
Corollary~\ref{cor:dual-local-estimate}.

Fix $x_0\in T_0\in\mc T_h$ and $\varphi\in C_0^\infty(T_0)$ with
$\|\varphi\|_{L^2(T_0)}=1$. Let $z$ solve \eqref{eqn:local-source} and
set $z_h:=R_h^s z$. We use the dyadic decomposition from
Section~\ref{sec:regularity} with terminal scale
\begin{equation}\label{eqn:terminal-scale}
 H:=C_\ast h\ell_h^\beta,\qquad J:=\lceil-\log_2 H\rceil,
\end{equation}
where $C_\ast\geq1$ and $\beta\geq0$ will be chosen so that the local
interpolation estimates apply and the factor $(h/H)^s\ell_h^\xi$ in the
final absorption argument is small. Here $\xi$ is defined in
\eqref{eqn:xi-alpha}.
For fixed $C_\ast,\beta$ and sufficiently small $h$, we have $0<H<1$,
$H/2<r_J\leq H$, and $r_j\geq H$ for $0\leq j<J$.
Since $r_{J+p}>2^{-p-1}H$, taking $C_\ast$ sufficiently large ensures
the mesh-scale restrictions in Corollaries~\ref{cor:local-interp-estimate}
and~\ref{cor:dual-local-estimate} for all fixed enlargement indices used below.

Define the scaled energy by
\begin{equation}\label{eqn:scale-energy-def}
 \mE_j(v):=|v|_{H^s(A_j')}+r_j^{-s}\|v\|_{L^2(A_j')},
 \quad 0\leq j<J.
\end{equation}

\begin{lemma}[annular energy error]\label{lem:annular-energy-error}
Let $\beta=\xi/s$ in \eqref{eqn:terminal-scale}. There exists
$C_0=C_0(d,s,\gamma,\Omega,C_{\mathrm{geo}})>0$ such that, for
$C_\ast\geq C_0$ and sufficiently small $h$,
\begin{equation}\label{eqn:annular-energy-error}
 \mE_j(z-z_h)\leq C r_j^{-d/2}h^{d/2+s}\ell_h^{d\xi/(2s)+\xi},
 \quad 0\leq j<J,
\end{equation}
where $C=C(d,s,\gamma,\Omega,C_{\mathrm{geo}},C_\ast)$.
The same estimate holds with $A_j'$ replaced by $A_j^{(p)}$ for any
fixed $p\in\NN_+$, with $C_0$ and $C$ depending additionally on $p$.
\end{lemma}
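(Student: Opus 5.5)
We combine a local energy error estimate with localized duality, then close the resulting coupled inequalities for the two annular error sequences. Set $e:=z-z_h$ and
\[
 E_j:=\mE_j(e),\qquad L_j:=r_j^{-s}\|e\|_{L^2(\Omega_j'')},\qquad 0\le j<J.
\]

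\textbf{Local energy step.} First I will use the boundary-neighborhood extension of the local energy estimate of \cite{BorthagarayLeykekhmanNochetto2021} from Appendix~\ref{app:local-energy-error}. By Galerkin orthogonality $a_s(e,v_h)=0$, it gives a bound of the form
\[
 E_j\le C\Bigl(\inf_{\chi\in V_h}\mE_{j}^{(2)}(z-\chi)+r_j^{-s}\|e\|_{L^2(A_j'')}+r_j^{d/2}\sum_{i}r_i^{d/2}r_{i\vee j}^{\cdots}\,r_{i\wedge j}^{-d-2s}\|e\|_{L^2(A_i)}\Bigr).
\]
The last term is the nonlocal tail, coming from the kernel interaction between $A_j'$ and distant annuli. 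I take $\chi=I_hz$. Then Corollary~\ref{cor:dual-local-estimate}, \eqref{eqn:dual-core-local-interp}, bounds the approximation term by $Ch^s\ell_h^\xi(h/r_j)^{d/2}$. The boundary-weighted term in that corollary is exactly what controls the exterior interactions on $A_j\setminus\Omega$. Since $e=0$ outside $\Omega_h$ except for $z$ itself, the tail becomes $\sum_i r_j^{d/2}r_i^{d/2+s}r_{i\wedge j}^{-d-2s}L_i$.

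\textbf{Duality step.} Next, for each $j$, I will estimate $\|e\|_{L^2(\Omega_j)}$ by duality with $v_j:=e\chi_{\Omega_j}$ and $w_j$ solving \eqref{eqn:local-source-annuli}:
\[
 \|e\|_{L^2(\Omega_j)}^2=a_s(e,w_j-I_hw_j).
\]
I split the double integral into contributions localized on $A_k'\times A_k'$, plus off-diagonal pieces between $A_k$ and $A_i$ with $|i-k|\ge2$, plus the core $A_\ast$. Each near-diagonal piece is bounded by
\[
 E_k\cdot\bigl(|w_j-I_hw_j|_{H^s(A_k')}+r_k^{-s}\|w_j-I_hw_j\|\bigr)\le E_k\,h^s\ell_h^\xi\kappa_{jk}\|v_j\|,
\]
using \eqref{eqn:dual-local-interp}. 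The off-diagonal pieces are bounded by products of $L^2$ norms weighted by $r_{i\wedge k}^{-d-2s}$, using the $L^2$ parts of the same corollary. The core is handled by the crude global bound $\|e\|_{H^s}\le C h^s\ell_h^\xi$, from C\'ea's lemma and \eqref{eq:Ht-global-approx}, together with \eqref{eqn:dual-local-interp-core}. Dividing by $\|v_j\|$ gives
\[
 L_j\le C(h/r_j)^s\ell_h^\xi\sum_k\kappa_{jk}E_k+\text{core}.
\]

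\textbf{Closing the system.} Substituting this into the energy inequality yields a linear system $E\le a+KE$, with $a_j=Cr_j^{-d/2}h^{d/2+s}\ell_h^\xi$. The kernel $K$ has entries of size $(h/r)^s\ell_h^\xi$ times geometrically decaying factors in $|j-k|$: the $\kappa_{jk}$ decay like $2^{-d|j-k|/2}$ or $2^{-(d/2-s)|j-k|}$, and the tail weights like $2^{-2s|i-j|}$. I test against the weights $r_j^{d/2}$. Then the weighted matrix is a Schur-bounded operator with norm at most $C(h/H)^s\ell_h^\xi$. With $H=C_\ast h\ell_h^{\xi/s}$ this equals $CC_\ast^{-s}$, which is at most $1/2$ once $C_\ast\ge C_0$. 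A Neumann series then gives $\sup_j r_j^{d/2}E_j\le 2\sup_j r_j^{d/2}a_j$.

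\textbf{Main obstacle.} The hard part is the Schur bound itself. The growth directions, namely $\kappa_{jk}$ for $k>j$ carrying $(r_k/r_j)^{d/2-s}$ and the tail sums over all annuli, must be compensated by the weight $r_j^{d/2}$ and the factor $(h/r_j)^s$. Each weighted row and column sum has to converge uniformly in $J$. The extra $\ell_h^{d\xi/(2s)}$ in \eqref{eqn:annular-energy-error} arises when the crude core bound is propagated with factors $(H/r_j)^{d/2}$: it converts $H^{d/2}$ into $h^{d/2}\ell_h^{d\xi/(2s)}$. I would verify this bookkeeping first. The $A_j^{(p)}$ version follows by the same argument with enlarged regions.
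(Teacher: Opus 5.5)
Your architecture matches the paper's: a ball covering of $A_j'$ with the boundary local energy estimate and $v_h=I_hz$, then localized duality with annular sources $w_j$ and Corollary~\ref{cor:dual-local-estimate}. The system is closed by absorption in a weighted sup norm with $\beta=\xi/s$ and $C(h/H)^s\ell_h^\xi=CC_\ast^{-s}\le 1/2$. Your $\max_j r_j^{d/2}\mE_j(e)$ plays the role of the paper's $M=\max_k r_k^{d/2-s}\|e\|_{L^2(\Omega_k)}$.

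The row-sum bound you single out as the crux does hold, but only with the correct coupling. The tail gives $\mE_j(e)\lesssim\dots+\sum_k 2^{-(d/2+s)|j-k|}r_k^{-s}\|e\|_{L^2(\Omega_k)}$, as in \eqref{eqn:annular-tail}; your displayed tail carries a spurious factor $r_j^{-s}$. Combined with $\sum_m\kappa_{km}r_m^{-d/2}\le Cr_k^{s-d/2}H^{-s}$, this gives row sums $\le C(h/H)^s\ell_h^\xi$. With the rate $2^{-2s|i-j|}$ you quote, the weighted row sums would instead grow like $(r_j/H)^{d/2-2s}$ for $d>4s$.

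The genuine gap is the $L^2$ error on the core $\Omega_\ast$. Your duality step never reaches it, since it only controls $\|e\|_{L^2(\Omega_j)}$ for $j<J$. Yet this error enters the energy inequality in three places:
\begin{itemize}
\item the tail term $r_j^{-d/2-s}H^{d/2}\|e\|_{L^2(\Omega_\ast)}$;
\item the term $r_j^{-s}\|e\|_{L^2(\Omega_j'')}$ for $j\ge J-2$;
\item $\mE_{J-1}(e)$ itself, since $A_J\subset A_{J-1}'\cap A_\ast$.
\end{itemize}
The ``crude'' bound $|e|_{H^s(\RR^d)}\le Ch^s\ell_h^\xi$ plus Poincar\'e gives only $\|e\|_{L^2(\Omega_\ast)}\le Ch^s\ell_h^\xi$. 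The tail term is then $r_j^{-d/2}h^{d/2+s}\cdot r_j^{-s}(H/h)^{d/2}\ell_h^\xi$. For $r_j\simeq H$ this exceeds the target by a factor of order $H^{-s}$, a polynomial loss.

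The missing ingredient is the Aubin--Nitsche estimate $\|e\|_{L^2(\Omega)}\le Ch^{2s}\ell_h^{2\xi}$ of \eqref{eqn:global-error-bounds}. It is obtained by testing with $w$ solving $(-\Delta)^sw=e$, then using Galerkin orthogonality, Theorem~\ref{thm:sobolev-shift-lip} and Lemma~\ref{lem:global-fractional}. With it, every core contribution is at most $r_j^{-d/2}H^{d/2-s}h^{2s}\ell_h^{2\xi}=C_\ast^{d/2-s}h^{d/2+s}r_j^{-d/2}\ell_h^{\beta(d/2-s)+2\xi}$. This bound, not the energy bound, produces the exponent $d\xi/(2s)+\xi$ once $\beta=\xi/s$. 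For $2s<d$, the localized Sobolev bound $\|e\|_{L^2(\Omega_\ast)}\le CH^s|e|_{H^s(\RR^d)}$ would serve equally well.

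In the duality step the energy bound alone can be made to suffice for the core, by applying Cauchy--Schwarz against $|e|_{H^s(\RR^d)}$ on the annulus--core interactions. The paper uses the $L^2$ bound there as well.
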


The proof uses the following local energy estimate, valid for $0<s<1$
and also for balls intersecting $\partial\Omega$. The boundary case is treated in
Appendix~\ref{app:local-energy-error}.

\begin{theorem}[local energy estimate, {\cite[Theorem~5.3]{BorthagarayLeykekhmanNochetto2021}}]\label{thm:local-energy-error}
Let $0<s<1$, $u\in\widetilde H^s(\Omega)$ and $u_h:=R_h^s u$.
For any ball $B_R:=B_R(x_0)$ with $R\geq16h$ and any $v_h\in V_h$,
\begin{equation}\label{eqn:local-energy-error-split}
 \begin{aligned}
 &|u-u_h|_{H^s(B_{R/2})}^2
 \leq C|u-v_h|_{H^s(B_R)}^2
       +CR^{-2s}\|u-v_h\|_{L^2(\Omega\cap B_R)}^2\\
 &\quad+CR^{d+2s}\left(\int_{\Omega\cap B_R^c}
       \frac{|u-v_h|}{|x-x_0|^{d+2s}}\,\mathrm{d}x\right)^2\\
 &\quad+CR^{-2s}\|u-u_h\|_{L^2(\Omega\cap B_R)}^2
       +CR^{d+2s}\left(\int_{\Omega\cap B_R^c}
       \frac{|u-u_h|}{|x-x_0|^{d+2s}}\,\mathrm{d}x\right)^2.
 \end{aligned}
\end{equation}
Here $C$ depends only on $d$, $s$ and $\gamma$.
\end{theorem}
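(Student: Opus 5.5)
The plan is the Nitsche--Schatz argument: a cutoff, Galerkin orthogonality and superapproximation, carried out entirely on $\RR^d$ with zero extensions. Working on all of $\RR^d$ is what lets balls meeting $\partial\Omega$ be treated exactly like interior balls. The only difference at the boundary is that $L^2$ norms of functions vanishing outside $\Omega$ reduce to norms over $\Omega\cap B_R$.

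\emph{Setup and error splitting.} Set $e:=u-u_h$ and $\psi_h:=v_h-u_h\in V_h$, so that $e=(u-v_h)+\psi_h$. Fix a cutoff $\omega\in C_0^\infty(B_{3R/4})$ with $\omega=1$ on $B_{R/2}$ and $|\nabla^i\omega|\le CR^{-i}$. Then
\[
|e|_{H^s(B_{R/2})}\le |u-v_h|_{H^s(B_R)}+|\omega\psi_h|_{H^s(\RR^d)},
\]
and it remains to bound $|\omega\psi_h|_{H^s(\RR^d)}^2=a_s(\omega\psi_h,\omega\psi_h)$.

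\emph{Commutator identity and orthogonality.} The standard identity
\[
a_s(\omega\psi,\omega\psi)=a_s(\psi,\omega^2\psi)+\tfrac{c_{d,s}}2\iint\frac{(\omega(x)-\omega(y))^2\psi(x)\psi(y)}{|x-y|^{d+2s}}\,\mathrm{d}x\,\mathrm{d}y
\]
holds. In the commutator integral, split into $|x-y|<R$, where $(\omega(x)-\omega(y))^2\le CR^{-2}|x-y|^2$, and the far region, where $x$ or $y$ lies in $B_{3R/4}$. This bounds it by $CR^{-2s}\|\psi_h\|_{L^2(\Omega\cap B_R)}^2$ plus a tail term
\[
CR^{d/2}\|\psi_h\|_{L^2(B_R)}\int_{\Omega\cap B_R^c}\frac{|\psi_h|}{|x-x_0|^{d+2s}}\,\mathrm{d}x.
\]
Next, with $I_h$ the nodal interpolant, write
\[
a_s(\psi_h,\omega^2\psi_h)=a_s\bigl(\psi_h,\omega^2\psi_h-I_h(\omega^2\psi_h)\bigr)+a_s\bigl(v_h-u,I_h(\omega^2\psi_h)\bigr).
\]
Galerkin orthogonality was used in the second term, and it is legitimate because $I_h(\omega^2\psi_h)\in V_h$: $\psi_h$ vanishes on $\partial\Omega_h$, so the boundary nodal values are zero.

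\emph{Estimating the two terms.} Each bilinear form $a_s(f,g)$ with $\supp g\subset B_{R'}$, $R'\le 3R/4+h$, splits into a near part and a tail part:
\[
|a_s(f,g)|\le |f|_{H^s(B_R)}|g|_{H^s}+CR^{d/2}\|g\|_{L^2}\int_{B_R^c}\frac{|f|}{|x-x_0|^{d+2s}}\,\mathrm{d}x,
\]
using $|x-y|\gtrsim|y-x_0|$ for $y\notin B_R$.

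\emph{Superapproximation.} The key estimate is
\[
|\omega^2\psi_h-I_h(\omega^2\psi_h)|_{H^s}\le C\bigl(h^{1-s}R^{-1}\|\nabla\psi_h\|_{L^2(B_R)}+\dots\bigr)\le C(h/R)^{1-s}R^{-s}\|\psi_h\|_{L^2(B_R)}.
\]
It is proved elementwise: the error vanishes on elements where $\omega$ is constant. On the remaining elements, $D^2(\omega^2\psi_h)$ only involves derivatives of $\omega$, since $\psi_h$ is piecewise affine. One then interpolates between the $L^2$ and $H^1$ error bounds and applies an inverse estimate, which needs $R\ge16h$. The fractional seminorm over the union of elements is localized with Lemma~\ref{lem:frac-localize} and Lemma~\ref{lem:frac-cutoff}.

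For the term $a_s(v_h-u,I_h(\omega^2\psi_h))$, stability of $I_h$ on discrete functions gives $|I_h(\omega^2\psi_h)|_{H^s}\le C(|\omega\psi_h|_{H^s}+R^{-s}\|\psi_h\|_{L^2(B_R)})$. Young's inequality then absorbs $|\omega\psi_h|_{H^s}^2$ into the left-hand side.

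\emph{Closing.} After absorption, what remains is $R^{-2s}\|\psi_h\|^2_{L^2(\Omega\cap B_R)}$ and tail terms of $\psi_h$. Both are bounded by the corresponding quantities for $u-v_h$ and $e$ via the triangle inequality, which yields \eqref{eqn:local-energy-error-split}.

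\emph{Main obstacle.} The hard step is the fractional superapproximation together with the near-boundary handling. When $B_R$ crosses $\partial\Omega$, elements touching $\partial\Omega_h$ appear, and the interaction of $\omega^2\psi_h-I_h(\omega^2\psi_h)$ with $\Omega^c$ must be controlled. I would first try to avoid any boundary-specific estimate. The error is supported on $\Omega_h$ elements inside $B_{R}$, and the elementwise bound plus Lemma~\ref{lem:frac-localize} control its full $\RR^d$ seminorm, including the exterior part, uniformly. Because of this, the boundary appears only through the restriction of the $L^2$ norms to $\Omega\cap B_R$.
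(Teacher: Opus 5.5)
Your outline matches the paper's proof: a cutoff, Galerkin orthogonality tested with the nodal interpolant of $\omega^2\psi_h$ (legitimate because $\psi_h$ vanishes on $\partial\Omega_h$), superapproximation, and zero extensions so that balls meeting $\partial\Omega$ need no special treatment. Working directly with $\psi_h=v_h-u_h$ is equivalent to the paper's final step of applying the discrete bound to $(u-v_h,u_h-v_h)$. There is, however, a genuine gap in the discrete term $a_s(\psi_h,g)$, $g:=\omega^2\psi_h-I_h(\omega^2\psi_h)$, when $1/2<s<1$.

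Your splitting of $a_s(f,g)$ leaves the factor $|\psi_h|_{H^s}$ on the near region around $\supp g$. This factor is not on the right-hand side. It is also not the absorbable quantity $|\omega\psi_h|_{H^s(\RR^d)}$, since $\omega\not\equiv1$ there; indeed $g$ vanishes where $\omega\equiv1$. The only control available is the local inverse estimate $|\psi_h|_{H^s(B_{7R/8})}\le Ch^{-s}\|\psi_h\|_{L^2(B_R)}$. Combined with your superapproximation bound $|g|_{H^s}\le C(h/R)^{1-s}R^{-s}\|\psi_h\|_{L^2(B_R)}$, it only yields
\[
C\,h^{-s}\Bigl(\frac hR\Bigr)^{1-s}R^{-s}\|\psi_h\|_{L^2(B_R)}^2
=C\Bigl(\frac Rh\Bigr)^{2s-1}R^{-2s}\|\psi_h\|_{L^2(B_R)}^2 .
\]
This is fine for $s\le1/2$, the range used in Theorem~\ref{thm:linf-stab}. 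For $s>1/2$, however, the factor $(R/h)^{2s-1}$ is unbounded, while the statement claims a constant depending only on $d,s,\gamma$ for all $0<s<1$.

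The missing step is the paper's refined superapproximation bound. It keeps the absorbable quantity instead of applying an inverse estimate to $\nabla\psi_h$. Elementwise, $D^2(\omega^2\psi_h)=\psi_hD^2(\omega^2)+4\omega\nabla\omega\otimes\nabla\psi_h$, which gives
\[
\|g\|_{L^2}+h|g|_{H^1}\le C\frac{h^2}{R}\bigl(|\omega\psi_h|_{H^1(B_R)}+R^{-1}\|\psi_h\|_{L^2(B_R)}\bigr).
\]
Since $\omega\psi_h$ is not discrete, you then need an inverse estimate for it. Freeze $\omega_T:=\omega(x_T)$ on each element, so that $\omega\psi_h=\omega_T\psi_h+(\omega-\omega_T)\psi_h$. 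The first part is affine. The second has $L^2$, $H^s$ and $H^1$ sizes $hR^{-1}$, $h^{1-s}R^{-1}$ and $R^{-1}$ times $\|\psi_h\|_{L^2(T)}$. This yields $|\omega\psi_h|_{H^1(B_R)}\le Ch^{s-1}|\omega\psi_h|_{H^s(B_R)}+CR^{-1}\|\psi_h\|_{L^2(B_R)}$. Interpolating the $L^2$ and $H^1$ bounds then gives
\[
|g|_{H^s(\RR^d)}\le C\frac hR|\omega\psi_h|_{H^s(\RR^d)}+C\frac{h^{2-s}}{R^2}\|\psi_h\|_{L^2(B_R)}.
\]
With the inverse estimate, the near part is bounded by
\[
Ch^{1-s}R^{-1}\|\psi_h\|_{L^2(B_R)}|\omega\psi_h|_{H^s}+Ch^{2-2s}R^{-2}\|\psi_h\|_{L^2(B_R)}^2\le\theta|\omega\psi_h|_{H^s}^2+C_\theta R^{-2s}\|\psi_h\|_{L^2(B_R)}^2 .
\]
This holds for every $s<1$ because $h\le R$. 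Your rough bound is still what you need for the tail part of this term and for the $H^s$ bound on $I_h(\omega^2\psi_h)$.

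Two smaller points. First, your generic bound for $a_s(f,g)$ omits the harmless term $CR^{-2s}\|f\|_{L^2(B_R)}\|g\|_{L^2}$. It comes from $f(x)g(x)$ on the region $x\in\supp g$, $y\notin B_R$. Second, all local $L^2$ norms must stay in $B_R$. To ensure this, split the commutator integral according to whether both points lie in $B_R$, rather than at $|x-y|=R$. Likewise take the near region of $a_s(\psi_h,g)$ to be $B_{7R/8}$, as the paper does, so that the inverse estimate only involves $\|\psi_h\|_{L^2(B_R)}$.
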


To prove Lemma~\ref{lem:annular-energy-error}, we first derive coupled local
energy and $L^2$ error estimates, then close them by weighted summation and
absorption. Until $C_\ast$ is fixed, $C$ may depend on $d$, $s$, $\gamma$,
$\Omega$ and $C_{\mathrm{geo}}$, but is independent of $h$, $J$ and $C_\ast$.

\subsection{Annular energy estimate}
Set $\rho:=z-z_h$. Fix $0\leq j<J$ and set $\tilde r_j:=r_j/64$.
Choose a covering
\[
 \overline{A_j'}\subset\bigcup_{i=1}^N B_{\tilde r_j}(x_i),
 \qquad x_i\in A_j',
\]
with $N$ and the overlap bounded independently of $j$.
Lemma~\ref{lem:frac-localize} (localization) gives
\begin{equation}\label{eqn:annular-localization}
 |\rho|_{H^s(A_j')}^2
 \leq\sum_{i=1}^N|\rho|_{H^s(B_{2\tilde r_j}(x_i))}^2
       +Cr_j^{-2s}\|\rho\|_{L^2(\Omega_j')}^2.
\end{equation}
Applying Theorem~\ref{thm:local-energy-error} (local energy estimate)
with $u=z$, $u_h=z_h$ and $v_h=I_hz$, we obtain
\begin{equation}\label{eqn:frac-local-error}
 \begin{aligned}
 |\rho|_{H^s(B_{2\tilde r_j}(x_i))} & 
 \leq C|z-I_hz|_{H^s(B_{4\tilde r_j}(x_i))}
       +Cr_j^{-s}\|z-I_hz\|_{L^2(\Omega\cap B_{4\tilde r_j}(x_i))}\\
 &\quad+C\mc T_{j,i}(z-I_hz)
       +Cr_j^{-s}\|\rho\|_{L^2(\Omega\cap B_{4\tilde r_j}(x_i))}
       +C\mc T_{j,i}(\rho),
 \end{aligned}
\end{equation}
where
\[
 \mc T_{j,i}(v):=r_j^{d/2+s}
 \int_{\Omega\cap B_{4\tilde r_j}^c(x_i)}
 \frac{|v(x)|}{|x-x_i|^{d+2s}}\,\mathrm{d}x.
\]
For $v\in L^2(\Omega)$, dyadic decomposition and Cauchy--Schwarz give
\begin{equation}\label{eqn:annular-tail}
 \begin{aligned}
 \mc T_{j,i}(v)
 &\leq Cr_j^{d/2+s}\biggl(
       \sum_{k=0}^{J-1}r_{j\wedge k}^{-d-2s}\|v\|_{L^1(\Omega_k)}
       +r_j^{-d-2s}\|v\|_{L^1(\Omega_\ast)}\biggr)\\
 &\leq Cr_j^{d/2+s}\biggl(
       \sum_{k=0}^{J-1}r_{j\wedge k}^{-d-2s}r_k^{d/2}
       \|v\|_{L^2(\Omega_k)}
       +r_j^{-d-2s}H^{d/2}\|v\|_{L^2(\Omega_\ast)}\biggr)\\
 &=C\sum_{k=0}^{J-1}2^{-(d/2+s)|j-k|}
       r_k^{-s}\|v\|_{L^2(\Omega_k)}
       +Cr_j^{-d/2-s}H^{d/2}\|v\|_{L^2(\Omega_\ast)}.
 \end{aligned}
\end{equation}
To justify the bounds in the first step, recall that
$r_j/4\leq|x_i-x_0|\leq2r_j$ and $4\tilde r_j=r_j/16$.
For $x\in\Omega_k\cap B_{4\tilde r_j}^c(x_i)$,
\[
 |x-x_i|\geq
 \begin{cases}
 r_k/2-2r_j\geq r_k/4,&k\leq j-3,\\
 r_j/4-r_k\geq r_j/8,&k\geq j+3,\\
 r_j/16\geq r_{j\wedge k}/64,&|k-j|\leq2.
 \end{cases}
\]
Thus $|x-x_i|\geq r_{j\wedge k}/64$ in all three cases.
On $\Omega_\ast\cap B_{4\tilde r_j}^c(x_i)$, the exclusion of the same
ball gives $|x-x_i|\geq r_j/16$, yielding the core term.

By \eqref{eqn:dual-core-local-interp}--\eqref{eqn:dual-core-local-interp-core}
and their versions on enlarged annuli,
\begin{align}
 h^{-s}\|z-I_hz\|_{L^2(\Omega_k')}
 +|z-I_hz|_{H^s(A_k')}
 &\leq Ch^{d/2+s}\ell_h^\xi r_k^{-d/2},
 \label{eqn:annular-interp-error}\\
 h^{-s}\|z-I_hz\|_{L^2(\Omega_\ast)}
 +|z-I_hz|_{H^s(A_\ast)}
 &\leq Ch^s\ell_h^\xi.
 \label{eqn:core-interp-error}
\end{align}
Substituting \eqref{eqn:annular-tail}--\eqref{eqn:core-interp-error}
into \eqref{eqn:frac-local-error} and using
$B_{4\tilde r_j}(x_i)\subset A_j''$ and $h\leq r_j$, we obtain
\begin{equation}\label{eqn:frac-local-error-ball}
 \begin{aligned}
 &|\rho|_{H^s(B_{2\tilde r_j}(x_i))}
 \leq Ch^{d/2+s}\ell_h^\xi r_j^{-d/2}
       +Ch^{d/2+2s}\ell_h^\xi
        \sum_{k=0}^{J-1}2^{-(d/2+s)|j-k|}r_k^{-d/2-s}\\
 &\quad+Cr_j^{-d/2-s}H^{d/2}h^{2s}\ell_h^\xi
       +Cr_j^{-s}\|\rho\|_{L^2(\Omega_j'')}\\
 &\quad+C\sum_{k=0}^{J-1}2^{-(d/2+s)|j-k|}
        r_k^{-s}\|\rho\|_{L^2(\Omega_k)}
       +Cr_j^{-d/2-s}H^{d/2}\|\rho\|_{L^2(\Omega_\ast)}\\
 &\leq CC_\ast^{d/2-s}h^{d/2+s}r_j^{-d/2}
        \ell_h^{\beta(d/2-s)+2\xi}
       +C\sum_{k=0}^{J-1}2^{-(d/2+s)|j-k|}
        r_k^{-s}\|\rho\|_{L^2(\Omega_k)}.
 \end{aligned}
\end{equation}
To justify the last inequality, we estimate the terms as follows.
\begin{enumerate}
\renewcommand{\labelenumi}{\textup{(\roman{enumi})}}
\item The local interpolation term
$Ch^{d/2+s}\ell_h^\xi r_j^{-d/2}$ is bounded by the explicit term in
the last line, since $d/2-s\geq0$, $C_\ast\geq1$ and $\ell_h\geq1$.

\item For the annular interpolation tail, using $h\leq r_k$ and
summing a geometric series gives
\[
 \begin{aligned}
 &h^{d/2+2s}\ell_h^\xi
   \sum_{k=0}^{J-1}2^{-(d/2+s)|j-k|}r_k^{-d/2-s}\\
 &\quad\leq h^{d/2+s}\ell_h^\xi
   \sum_{k=0}^{J-1}2^{-(d/2+s)|j-k|}r_k^{-d/2}
   \leq Ch^{d/2+s}\ell_h^\xi r_j^{-d/2}.
 \end{aligned}
\]

\item For the core interpolation tail, $r_j\geq H$ and
\eqref{eqn:terminal-scale} give
\[
 \begin{aligned}
 r_j^{-d/2-s}H^{d/2}h^{2s}\ell_h^\xi
 &\leq r_j^{-d/2}H^{d/2-s}h^{2s}\ell_h^\xi \\
 &=C_\ast^{d/2-s}h^{d/2+s}r_j^{-d/2}
   \ell_h^{\beta(d/2-s)+\xi}.
 \end{aligned}
\]

\item For the local $L^2$ error, use $r_k\simeq r_j$ when
$|k-j|\leq2$. Moreover, $\Omega_j''$ meets $\Omega_\ast$ only if
$j\geq J-2$, in which case $r_j\simeq H$. Thus
\[
 \begin{aligned}
 r_j^{-s}\|\rho\|_{L^2(\Omega_j'')}
 &\leq C\sum_{\substack{0\leq k<J\\|k-j|\leq2}}
       2^{-(d/2+s)|j-k|}r_k^{-s}\|\rho\|_{L^2(\Omega_k)}\\
 &\quad+Cr_j^{-d/2-s}H^{d/2}\|\rho\|_{L^2(\Omega_\ast)}.
 \end{aligned}
\]
These contributions are covered by the fifth and sixth terms.

\setcounter{enumi}{5}
\item For the core error tail, we use an Aubin--Nitsche argument.
By Theorem~\ref{thm:sobolev-shift-lip}
(global Sobolev regularity) and Lemma~\ref{lem:global-fractional}
(global fractional-order interpolation estimates), energy best approximation gives
$|\rho|_{H^s(\RR^d)}\leq Ch^s\ell_h^\xi$.
Let $w\in\widetilde H^s(\Omega)$ solve
$a_s(w,v)=(\rho,v)_\Omega$ for all $v\in\widetilde H^s(\Omega)$.
Applying the same regularity and interpolation estimates to $w$ and
using Galerkin orthogonality, we obtain
\[
 \begin{aligned}
 \|\rho\|_{L^2(\Omega)}^2
 &=a_s(\rho,w-I_hw)\\
 &\leq Ch^s\ell_h^\xi|\rho|_{H^s(\RR^d)}\|\rho\|_{L^2(\Omega)}
 \leq Ch^{2s}\ell_h^{2\xi}\|\rho\|_{L^2(\Omega)}.
 \end{aligned}
\]
Thus we have the global error bounds
\begin{equation}\label{eqn:global-error-bounds}
 |\rho|_{H^s(\RR^d)}\leq Ch^s\ell_h^\xi,
 \quad
 \|\rho\|_{L^2(\Omega)}\leq Ch^{2s}\ell_h^{2\xi}.
\end{equation}
Since $r_j\geq H$, \eqref{eqn:terminal-scale} gives
\[
 \begin{aligned}
 r_j^{-d/2-s}H^{d/2}\|\rho\|_{L^2(\Omega_\ast)}
 &\leq Cr_j^{-d/2-s}H^{d/2}h^{2s}\ell_h^{2\xi}\\
 &\leq Cr_j^{-d/2}H^{d/2-s}h^{2s}\ell_h^{2\xi}\\
 &=CC_\ast^{d/2-s}h^{d/2+s}r_j^{-d/2}
        \ell_h^{\beta(d/2-s)+2\xi}.
 \end{aligned}
\]
\end{enumerate}

Combining \eqref{eqn:annular-localization} and
\eqref{eqn:frac-local-error-ball} with \eqref{eqn:scale-energy-def} yields
\begin{equation}\label{eqn:local-energy-error}
 \begin{aligned}
 \mc E_j(\rho)
 &\leq CC_\ast^{d/2-s}h^{d/2+s}r_j^{-d/2}
       \ell_h^{\beta(d/2-s)+2\xi} \\
  &\quad +C\sum_{k=0}^{J-1}2^{-(d/2+s)|j-k|}
       r_k^{-s}\|\rho\|_{L^2(\Omega_k)}.
 \end{aligned}
\end{equation}

\subsection{Annular \texorpdfstring{$L^2$}{L2} estimates by duality}
Fix $0\leq j<J$. For $v_j\in C_0^\infty(\Omega_j)$ with
$\|v_j\|_{L^2(\Omega_j)}=1$, let $w_j$ solve
\eqref{eqn:local-source-annuli} and set $\zeta_j:=w_j-I_hw_j$.
Galerkin orthogonality gives
\[
 (\rho,v_j)=a_s(\rho,w_j)=a_s(\rho,\zeta_j).
\]
Since both functions vanish in $\Omega^c$, we split the bilinear form as
\begin{equation}\label{eqn:integral-split}
 \begin{aligned}
 a_s(\rho,\zeta_j)
 &=\frac{c_{d,s}}{2}\int_\Omega\int_\Omega
   \frac{(\rho(x)-\rho(y))(\zeta_j(x)-\zeta_j(y))}{|x-y|^{d+2s}}
   \,\mathrm{d}y\mathrm{d}x\\
 &\quad+c_{d,s}\int_\Omega\int_{\Omega^c}
   \frac{\rho(x)\zeta_j(x)}{|x-y|^{d+2s}}\,\mathrm{d}y\mathrm{d}x.
 \end{aligned}
\end{equation}
We use the local interpolation estimates \eqref{eqn:dual-local-interp}
and \eqref{eqn:dual-local-interp-core}, also on fixed enlarged regions.

\textbf{Step 1: cross term.}
Lemma~\ref{lem:delta-upper-bound} (exterior kernel bound) gives
\[
 \begin{aligned}
 &\left|\int_\Omega\int_{\Omega^c}
   \frac{\rho(x)\zeta_j(x)}{|x-y|^{d+2s}}\,\mathrm{d}y\mathrm{d}x\right|
 \leq C\int_\Omega|\rho\zeta_j|\delta^{-2s}\,\mathrm{d}x\\
 &\quad\leq C\sum_{k=0}^{J-1}
   \|\rho\delta^{-s}\|_{L^2(\Omega_k)}
   \|\zeta_j\delta^{-s}\|_{L^2(\Omega_k)}
   +C\|\rho\delta^{-s}\|_{L^2(\Omega_\ast)}
   \|\zeta_j\delta^{-s}\|_{L^2(\Omega_\ast)}.
 \end{aligned}
\]
By \eqref{eqn:delta-upper-bound} and the zero extension of $\rho$,
\[
 \begin{aligned}
 \|\rho\delta^{-s}\|_{L^2(\Omega_k)}^2
 &\leq C\int_{\Omega_k}\int_{\Omega^c}
   \frac{|\rho(x)|^2}{|x-y|^{d+2s}}\,\mathrm{d}y\mathrm{d}x\\
 &= C\int_{\Omega_k}\int_{\Omega^c\cap A_k'}
   \frac{|\rho(x)-\rho(y)|^2}{|x-y|^{d+2s}}\,\mathrm{d}y\mathrm{d}x
   +C\int_{\Omega_k}\int_{\Omega^c\setminus A_k'}
   \frac{|\rho(x)|^2}{|x-y|^{d+2s}}\,\mathrm{d}y\mathrm{d}x\\
 &\leq C\bigl(|\rho|_{H^s(A_k')}^2
   +r_k^{-2s}\|\rho\|_{L^2(\Omega_k')}^2\bigr)
   \leq C\mc E_k(\rho)^2.
 \end{aligned}
\]
On the core, \eqref{eqn:delta-upper-bound} and \eqref{eqn:global-error-bounds} give $\|\rho\delta^{-s}\|_{L^2(\Omega_\ast)}
 \leq C|\rho|_{H^s(\RR^d)}
 \leq Ch^s\ell_h^\xi$.
Since $r_J\simeq H$ and
$\kappa_{jJ}\simeq(H/r_j)^{d/2-s}$, the local interpolation estimates
\eqref{eqn:dual-local-interp}--\eqref{eqn:dual-local-interp-core} give
\begin{equation}\label{eqn:cross-term}
 \begin{aligned}
 &\left|\int_\Omega\int_{\Omega^c}
   \frac{\rho(x)\zeta_j(x)}{|x-y|^{d+2s}}\,\mathrm{d}y\mathrm{d}x\right| 
   \leq Ch^s\ell_h^\xi\sum_{k=0}^{J-1}\kappa_{jk}\mc E_k(\rho)
   +Ch^{2s}\ell_h^{2\xi}\left(\frac{H}{r_j}\right)^{d/2-s}.
 \end{aligned}
\end{equation}

\textbf{Step 2: main term.}
For the first term in \eqref{eqn:integral-split}, we separate neighboring
regions from separated annuli and the core:
\begin{equation}\label{eqn:main-term-split}
 \begin{aligned}
 &\frac{c_{d,s}}{2}\int_\Omega\int_\Omega
   \frac{|\rho(x)-\rho(y)||\zeta_j(x)-\zeta_j(y)|}{|x-y|^{d+2s}}
   \,\mathrm{d}y\mathrm{d}x\\
 &\quad\leq\frac{c_{d,s}}{2}\Biggl[
   \sum_{k=0}^{J-1}\int_{\Omega_k'}\int_{\Omega_k'}
      +\int_{\Omega_\ast'}\int_{\Omega_\ast'}
   +2\sum_{\substack{0\leq i<k<J\\k-i\geq2}}
   \int_{\Omega_i}\int_{\Omega_k}
   +2\sum_{k=0}^{J-2}\int_{\Omega_k}\int_{\Omega_\ast}\Biggr]\\
 &\qquad\quad\times
   \frac{|\rho(x)-\rho(y)||\zeta_j(x)-\zeta_j(y)|}{|x-y|^{d+2s}}
   \,\mathrm{d}y\mathrm{d}x.
 \end{aligned}
\end{equation}

\fbox{Step 2-1: near field.}
By Cauchy--Schwarz and the local interpolation estimates
\eqref{eqn:dual-local-interp}--\eqref{eqn:dual-local-interp-core},
the first two terms in \eqref{eqn:main-term-split} are bounded by
\begin{equation}\label{eqn:duality-near-field}
 \begin{aligned}
 &\sum_{k=0}^{J-1}|\rho|_{H^s(A_k')}|\zeta_j|_{H^s(A_k')}
   +|\rho|_{H^s(A_\ast')}|\zeta_j|_{H^s(A_\ast')} \\
 &\quad\leq Ch^s\ell_h^\xi\sum_{k=0}^{J-1}\kappa_{jk}\mc E_k(\rho)
   +Ch^{2s}\ell_h^{2\xi}\left(\frac{H}{r_j}\right)^{d/2-s}.
 \end{aligned}
\end{equation}

\fbox{Step 2-2: far field.}
On $\Omega_i\times\Omega_k$ with $0\leq i<k<J$ and $k-i\geq2$,
we have $|x-y|\simeq r_i$. Expanding the product gives
\[
 \int_{\Omega_i}\int_{\Omega_k}
 \frac{|\rho(x)-\rho(y)||\zeta_j(x)-\zeta_j(y)|}{|x-y|^{d+2s}}
 \,\mathrm{d}y\mathrm{d}x
 \leq Cr_i^{-d-2s}(I_1+I_2+I_3+I_4),
\]
where
\[
 \begin{aligned}
 I_1&:=|\Omega_k|\|\rho\zeta_j\|_{L^1(\Omega_i)},
 &I_4&:=|\Omega_i|\|\rho\zeta_j\|_{L^1(\Omega_k)},\\
 I_2&:=\|\zeta_j\|_{L^1(\Omega_i)}\|\rho\|_{L^1(\Omega_k)},
 &I_3&:=\|\rho\|_{L^1(\Omega_i)}\|\zeta_j\|_{L^1(\Omega_k)}.
 \end{aligned}
\]
By \eqref{eqn:scale-energy-def} and the interpolation estimate
\eqref{eqn:dual-local-interp}, with $\|v_j\|_{L^2(\Omega_j)}=1$,
we have, for $0\leq m<J$, $\|\rho\|_{L^2(\Omega_m)}\leq r_m^s\mc E_m(\rho)$ and $\|\zeta_j\|_{L^2(\Omega_m)}
 \leq Ch^{2s}\ell_h^\xi\kappa_{jm}$.
For $I_1$ and $I_4$, Cauchy--Schwarz gives
\[
 I_1\leq Cr_k^d h^{2s}\ell_h^\xi\kappa_{ji}r_i^s\mc E_i(\rho),
 \quad
 I_4\leq Cr_i^d h^{2s}\ell_h^\xi\kappa_{jk}r_k^s\mc E_k(\rho).
\]
Using $\sum_{k\geq i+2}r_k^d\leq Cr_i^d$,
$\sum_{i=0}^{k-2}r_i^{-2s}\leq Cr_k^{-2s}$ and $h\leq r_k$, we then obtain
\[
 \begin{aligned}
 \sum_{\substack{0\leq i<k<J\\k-i\geq2}}r_i^{-d-2s}(I_1+I_4)
 &\leq Ch^{2s}\ell_h^\xi\sum_{k=0}^{J-1}r_k^{-s}\kappa_{jk}\mc E_k(\rho)
 \leq Ch^s\ell_h^\xi\sum_{k=0}^{J-1}\kappa_{jk}\mc E_k(\rho).
 \end{aligned}
\]

For the mixed terms, Cauchy--Schwarz gives
\[
 \begin{aligned}
 I_2&\leq Ch^{2s}\ell_h^\xi
       r_i^{d/2}\kappa_{ji}r_k^{d/2+s}\mc E_k(\rho),\\
 I_3&\leq Ch^{2s}\ell_h^\xi
       r_i^{d/2+s}\mc E_i(\rho)r_k^{d/2}\kappa_{jk}.
 \end{aligned}
\]
We use the dyadic bounds
\begin{equation}\label{eqn:kappa-inequ}
 \begin{aligned}
 \sum_{i=0}^{k-2}r_i^{-d/2-2s}\kappa_{ji}
 &\leq Cr_k^{-d/2-2s}\kappa_{jk},\qquad 0\leq k\leq J,\\
 \sum_{k=i+2}^{J-1}r_k^{d/2+s}\kappa_{jk}
 &\leq Cr_i^{d/2+s}\kappa_{ji},\qquad 0\leq i<J.
 \end{aligned}
\end{equation}
Indeed, by \eqref{eqn:kappa}, splitting the first sum at $i=j$ gives
\[
 \sum_{i=0}^{k-2}r_i^{-d/2-2s}\kappa_{ji}
 \leq
 \begin{cases}
 Cr_j^{d/2}r_k^{-d-2s}&k\leq j+1,\\
 C\bigl(r_j^{-d/2-2s}+r_j^{s-d/2}r_k^{-3s}\bigr)&k\geq j+2,
 \end{cases}
 \leq Cr_k^{-d/2-2s}\kappa_{jk}.
\]
For the second sum, the summands are comparable to $r_j^{d/2}r_k^s$
for $k\leq j$ and $r_j^{s-d/2}r_k^d$ for $k>j$; summing the two
geometric series proves the bound. Here the factors on neighboring
annuli are comparable to those in \eqref{eqn:kappa}.
Thus \eqref{eqn:kappa-inequ} yields
\[
 \begin{aligned}
 \sum_{\substack{0\leq i<k<J\\k-i\geq2}}r_i^{-d-2s}I_2
 &\leq Ch^{2s}\ell_h^\xi\sum_{k=0}^{J-1}r_k^{d/2+s}\mc E_k(\rho)
       \sum_{i=0}^{k-2}r_i^{-d/2-2s}\kappa_{ji}\\
 &\leq Ch^s\ell_h^\xi\sum_{k=0}^{J-1}\kappa_{jk}\mc E_k(\rho),\\
 \sum_{\substack{0\leq i<k<J\\k-i\geq2}}r_i^{-d-2s}I_3
 &\leq Ch^s\ell_h^\xi\sum_{i=0}^{J-1}r_i^{-d/2-s}\mc E_i(\rho)
       \sum_{k=i+2}^{J-1}r_k^{d/2+s}\kappa_{jk}\\
 &\leq Ch^s\ell_h^\xi\sum_{i=0}^{J-1}\kappa_{ji}\mc E_i(\rho),
 \end{aligned}
\]
where we also used $h^s\leq r_k^s$.
Combining the above estimates for $I_1,\ldots,I_4$ gives
\begin{equation}\label{eqn:duality-annular-far-field}
 \begin{aligned}
 &\sum_{\substack{0\leq i<k<J\\k-i\geq2}}
   \int_{\Omega_i}\int_{\Omega_k}
   \frac{|\rho(x)-\rho(y)||\zeta_j(x)-\zeta_j(y)|}{|x-y|^{d+2s}}
   \,\mathrm{d}y\mathrm{d}x \leq Ch^s\ell_h^\xi
   \sum_{k=0}^{J-1}\kappa_{jk}\mc E_k(\rho).
 \end{aligned}
\end{equation}

\fbox{Step 2-3: interaction with the core.}
For the last term in \eqref{eqn:main-term-split}, we have
$|x-y|\simeq r_k$ on $\Omega_k\times\Omega_\ast$ for $k\leq J-2$.
Define $I_1,\ldots,I_4$ as above with $(\Omega_i,\Omega_k)$ replaced by
$(\Omega_k,\Omega_\ast)$. The same expansion gives
\[
 \int_{\Omega_k}\int_{\Omega_\ast}
 \frac{|\rho(x)-\rho(y)||\zeta_j(x)-\zeta_j(y)|}{|x-y|^{d+2s}}
 \,\mathrm{d}y\mathrm{d}x
 \leq Cr_k^{-d-2s}(I_1+I_2+I_3+I_4).
\]
The terms involving $\rho$ on $\Omega_k$ satisfy
\[
 \begin{aligned}
 r_k^{-d-2s}I_1
 &\leq CH^d r_k^{-d-s}h^{2s}\ell_h^\xi\kappa_{jk}\mc E_k(\rho),\\
 r_k^{-d-2s}I_3
 &\leq CH^{d-s}r_k^{-d/2-s}h^{2s}\ell_h^\xi
       r_j^{s-d/2}\mc E_k(\rho).
 \end{aligned}
\]
Since $H\leq r_j,r_k$, \eqref{eqn:kappa} gives
$H^{d-s}r_k^{-d/2-s}r_j^{s-d/2}\leq CH^{-s}\kappa_{jk}$.
Together with $h\leq H$, this implies
\[
 \sum_{k=0}^{J-2}r_k^{-d-2s}(I_1+I_3)
 \leq Ch^s\ell_h^\xi\sum_{k=0}^{J-1}\kappa_{jk}\mc E_k(\rho).
\]
For the terms involving $\rho$ on the core, the global $L^2$ error
bound and \eqref{eqn:dual-local-interp-core} give
\[
 \begin{aligned}
 r_k^{-d-2s}I_2
 &\leq CH^{d/2}r_k^{-d/2-2s}h^{4s}\ell_h^{3\xi}\kappa_{jk},\\
 r_k^{-d-2s}I_4
 &\leq Cr_k^{-2s}h^{4s}\ell_h^{3\xi}
       \left(\frac{H}{r_j}\right)^{d/2-s}.
 \end{aligned}
\]
Applying \eqref{eqn:kappa-inequ} with $k=J$ and using $r_J\simeq H$, we obtain
\[
 \sum_{k=0}^{J-2}r_k^{-d-2s}(I_2+I_4)
 \leq CH^{-2s}h^{4s}\ell_h^{3\xi}
       \left(\frac{H}{r_j}\right)^{d/2-s}.
\]

Combining the above estimates for $I_1,\ldots,I_4$ gives
\begin{equation}\label{eqn:duality-core-far-field}
 \begin{aligned}
 &\sum_{k=0}^{J-2}\int_{\Omega_k}\int_{\Omega_\ast}
   \frac{|\rho(x)-\rho(y)||\zeta_j(x)-\zeta_j(y)|}{|x-y|^{d+2s}}
   \,\mathrm{d}y\mathrm{d}x\\
 &\quad\leq Ch^s\ell_h^\xi
   \sum_{k=0}^{J-1}\kappa_{jk}\mc E_k(\rho)
   +CH^{-2s}h^{4s}\ell_h^{3\xi}
    \left(\frac{H}{r_j}\right)^{d/2-s}.
 \end{aligned}
\end{equation}

Combining \eqref{eqn:duality-near-field},
\eqref{eqn:duality-annular-far-field} and
\eqref{eqn:duality-core-far-field} with \eqref{eqn:cross-term},
using $h\leq H$ and taking the supremum over normalized $v_j$ yields
\begin{equation}\label{eqn:local-L2-error}
 \begin{aligned}
 \|\rho\|_{L^2(\Omega_j)}
 &\leq Ch^s\ell_h^\xi\sum_{k=0}^{J-1}\kappa_{jk}\mc E_k(\rho)\\
 &\quad+Cr_j^{s-d/2}h^{d/2+s}
   \bigl(C_\ast^{d/2-2s}\ell_h^{3\xi+\beta(d/2-2s)}
        +C_\ast^{d/2-s}\ell_h^{2\xi+\beta(d/2-s)}\bigr),
 \end{aligned}
\end{equation}
where we substituted $H=C_\ast h\ell_h^\beta$ from \eqref{eqn:terminal-scale}.

\subsection{Closing the coupled annular estimates}
The scaled energy and local $L^2$ error control each other through the coupled
estimates derived above.
Recall from \eqref{eqn:local-energy-error} and
\eqref{eqn:local-L2-error} that, for $0\leq j<J$,
\[
 \begin{aligned}
 \mc E_j(\rho)
 &\leq CC_\ast^{d/2-s}h^{d/2+s}r_j^{-d/2}
       \ell_h^{\beta(d/2-s)+2\xi} +C\sum_{k=0}^{J-1}2^{-(d/2+s)|j-k|}
       r_k^{-s}\|\rho\|_{L^2(\Omega_k)},\\
 \|\rho\|_{L^2(\Omega_j)}
 &\leq Ch^s\ell_h^\xi\sum_{k=0}^{J-1}\kappa_{jk}\mc E_k(\rho)\\
 &\quad+Cr_j^{s-d/2}h^{d/2+s}
   \bigl(C_\ast^{d/2-2s}\ell_h^{3\xi+\beta(d/2-2s)}
        +C_\ast^{d/2-s}\ell_h^{2\xi+\beta(d/2-s)}\bigr).
 \end{aligned}
\]
Here $H=C_\ast h\ell_h^\beta$, with $\beta\geq0$ and $C_\ast\geq1$
to be chosen, and $C$ is independent of $h$, $J$ and $C_\ast$.
Set
\[
 M:=\max_{0\leq k<J}r_k^{d/2-s}\|\rho\|_{L^2(\Omega_k)}.
\]
Substituting $\|\rho\|_{L^2(\Omega_k)}\leq Mr_k^{s-d/2}$ into
\eqref{eqn:local-energy-error} and summing the geometric series gives
\begin{equation}\label{eqn:energy-in-M}
 \mc E_j(\rho)
 \leq Cr_j^{-d/2}\bigl(M+C_\ast^{d/2-s}h^{d/2+s}
                   \ell_h^{\beta(d/2-s)+2\xi}\bigr).
\end{equation}
Moreover, \eqref{eqn:kappa} and $r_J\simeq H$ imply
\[
 \begin{aligned}
 \sum_{k=0}^{J-1}\kappa_{jk}r_k^{-d/2}
 &\leq Cr_j^{d/2}\sum_{k=0}^{j}r_k^{-d}
       +Cr_j^{s-d/2}\sum_{k=j+1}^{J-1}r_k^{-s}\\
 &\leq Cr_j^{-d/2}+Cr_j^{s-d/2}H^{-s}
 \leq Cr_j^{s-d/2}H^{-s}.
 \end{aligned}
\]
Inserting \eqref{eqn:energy-in-M} into \eqref{eqn:local-L2-error},
multiplying by $r_j^{d/2-s}$ and taking the maximum over $j$, we obtain
\[
 \begin{aligned}
 M&\leq C\left(\frac{h}{H}\right)^s\ell_h^\xi
   \bigl(M+C_\ast^{d/2-s}h^{d/2+s}\ell_h^{\beta(d/2-s)+2\xi}\bigr)\\
 &\quad+Ch^{d/2+s}
   \bigl(C_\ast^{d/2-2s}\ell_h^{3\xi+\beta(d/2-2s)}
        +C_\ast^{d/2-s}\ell_h^{2\xi+\beta(d/2-s)}\bigr).
 \end{aligned}
\]
Choose $\beta:=\xi/s$ and then $C_\ast$ sufficiently large that
$C(h/H)^s\ell_h^\xi=CC_\ast^{-s}\leq1/2$.
Absorbing the term containing $M$ gives
\[
 M\leq CC_\ast^{d/2-s}h^{d/2+s}\ell_h^{d\xi/(2s)+\xi}.
\]
Returning to \eqref{eqn:energy-in-M}, we conclude that
\[
 \mc E_j(\rho)
 \leq CC_\ast^{d/2-s}r_j^{-d/2}h^{d/2+s}\ell_h^{d\xi/(2s)+\xi},
 \qquad 0\leq j<J.
\]
This proves \eqref{eqn:annular-energy-error}. The same argument applies
to any fixed enlargement of the annuli.

\section{Proof of almost \texorpdfstring{$L^\infty$}{L-infinity}-stability}\label{sec:linf-stability}
\begin{proof}[Proof of Theorem~\ref{thm:linf-stab}]
Set $u_h:=R_h^s u$. For $h\geq h_0>0$, the fractional Poincar\'e
and inverse inequalities give
\[
 \begin{aligned}
 \|u_h\|_{L^2(\Omega_h)}^2
 &\leq Ca_s(u,u_h)
 \leq Ch^{-2s}\|u\|_{L^\infty(\Omega)}\|u_h\|_{L^2(\Omega_h)},\\
 \|u_h\|_{L^\infty(\Omega_h)}
 &\leq Ch^{-d/2}\|u_h\|_{L^2(\Omega_h)}
 \leq C h_0^{-d/2 -2s}\|u\|_{L^\infty(\Omega)},
 \end{aligned}
\]
where we used $2s\leq1$. It therefore suffices to prove
\eqref{eqn:linf-stab} for sufficiently small $h$.
Recall that $\alpha$ and $\xi$ are given in \eqref{eqn:xi-alpha}, so that
$\alpha-1=d\xi/(2s)+\xi$.

\textbf{Step 1: duality.}
Choose $x_0\in T_0\in\mc T_h$ such that
$|u_h(x_0)|=\|u_h\|_{L^\infty(\Omega)}$. The inverse inequality and
$L^2$ duality give
\[
 \|u_h\|_{L^\infty(\Omega)}
 \leq Ch^{-d/2}\|u_h\|_{L^2(T_0)}
 =Ch^{-d/2}\sup_{\substack{\varphi\in C_0^\infty(T_0)\\
                 \|\varphi\|_{L^2(T_0)}=1}}|(u_h,\varphi)_\Omega|.
\]
For each such $\varphi$, let $z$ solve the element-source dual problem
introduced in \eqref{eqn:local-source},
\begin{equation}\label{eqn:dual-problem}
 \begin{cases}
 (-\Delta)^s z=\varphi &\text{in }\Omega,\\
 z=0 &\text{in }\Omega^c,
 \end{cases}
\end{equation}
and set $z_h:=R_h^s z$. Galerkin orthogonality yields
\[
 \begin{aligned}
 |(u_h,\varphi)_\Omega|
 &=|a_s(u_h,z)|=|a_s(u_h,z_h)|=|a_s(u,z_h)|\\
 &=|(u,(-\Delta)^s z_h)_\Omega|
 \leq\|u\|_{L^\infty(\Omega)}\|(-\Delta)^s z_h\|_{L^1(\Omega)}.
 \end{aligned}
\]
The last pairing is well defined: since $2s\leq1$, zero extension implies
$V_h\subset H^{2s}(\RR^d)$ and hence $(-\Delta)^s z_h\in L^2(\RR^d)$.
It remains to prove, uniformly over the normalized sources $\varphi$, that
\begin{equation}\label{eqn:discrete-dual-L1}
 \|(-\Delta)^s z_h\|_{L^1(\Omega)}\leq Ch^{d/2}\ell_h^\alpha.
\end{equation}

\textbf{Step 2: dyadic localization.}
Use the decomposition from Section~\ref{sec:annular-error}, with
$H=C_\ast h\ell_h^{\xi/s}$ and $C_\ast$ fixed as in
Lemma~\ref{lem:annular-energy-error} (annular energy error).
Choose smooth cutoffs $0\leq\psi_j,\psi_\ast\leq1$, supported in $A_j'$ and
$A_\ast$, respectively, such that
\[
 \begin{gathered}
 \sum_{j=0}^{J-1}\psi_j+\psi_\ast=1 \text{ on }\Omega, \quad
 \dist(\supp\psi_j,(A_j')^c)\geq r_j/C,\\
 |D^\nu\psi_j|\leq Cr_j^{-|\nu|},\quad
 |D^\nu\psi_\ast|\leq CH^{-|\nu|},\quad |\nu|\leq2.
 \end{gathered}
\]
Set $z_{h,j}:=\psi_jz_h$ and $z_{h,\ast}:=\psi_\ast z_h$, so that
$z_h=\sum_{j=0}^{J-1}z_{h,j}+z_{h,\ast}$. Then
\[
 \|(-\Delta)^s z_h\|_{L^1(\Omega)}
 \leq\sum_{j=0}^{J-1}\|(-\Delta)^s z_{h,j}\|_{L^1(\Omega)}
      +\|(-\Delta)^s z_{h,\ast}\|_{L^1(\Omega)}.
\]
Since $\supp z_{h,j}\subset A_j'$ and
$\dist(A_j',(A_j'')^c)\geq r_j/C$, Cauchy--Schwarz and integration of
the exterior kernel give
\[
 \begin{aligned}
 \|(-\Delta)^s z_{h,j}\|_{L^1(\Omega)}
 &\leq\|(-\Delta)^s z_{h,j}\|_{L^1(A_j'')}
       +\|(-\Delta)^s z_{h,j}\|_{L^1(\Omega\setminus A_j'')}\\
 &\leq Cr_j^{d/2}|z_{h,j}|_{H^{2s}(\RR^d)}
       +C\int_{\Omega\setminus A_j''}\int_{A_j'}
         \frac{|z_{h,j}(y)|}{|x-y|^{d+2s}}\,\mathrm{d}y\mathrm{d}x\\
 &\leq Cr_j^{d/2}|z_{h,j}|_{H^{2s}(\RR^d)}
       +Cr_j^{-2s}\|z_{h,j}\|_{L^1(A_j')}\\
 &\leq Cr_j^{d/2}\bigl(|z_{h,j}|_{H^{2s}(\RR^d)}
       +r_j^{-2s}\|z_{h,j}\|_{L^2(A_j')}\bigr).
 \end{aligned}
\]
Lemma~\ref{lem:frac-cutoff} (cutoff estimate), with $2s\leq1$, gives
\[
 |z_{h,j}|_{H^{2s}(\RR^d)}
 \leq C\bigl(|z_h|_{H^{2s}(A_j')}
       +r_j^{-2s}\|z_h\|_{L^2(A_j')}\bigr).
\]
Applying the same argument on the core and using $r_J\simeq H$, we obtain
\begin{equation}\label{eqn:discrete-dual-localization}
 \begin{aligned}
 \|(-\Delta)^s z_h\|_{L^1(\Omega)}
 &\leq C\sum_{j=0}^{J-1}r_j^{d/2}
       \bigl(|z_h|_{H^{2s}(A_j')}+r_j^{-2s}\|z_h\|_{L^2(A_j')}\bigr)\\
 &\quad+CH^{d/2}
       \bigl(|z_h|_{H^{2s}(A_\ast')}+H^{-2s}\|z_h\|_{L^2(A_\ast')}\bigr).
 \end{aligned}
\end{equation}
We estimate the $L^2$ and $H^{2s}$ terms separately.

\textbf{Step 3: the $L^2$ terms.}
Suppose first that $2s<d$. Lemma~\ref{lem:local-L2-core}
(element sources: $L^2$ estimates) and
Lemma~\ref{lem:annular-energy-error} give
\[
 \begin{aligned}
 \|z_h\|_{L^2(A_j')}
 &\leq\|z-z_h\|_{L^2(A_j')}+\|z\|_{L^2(A_j')}\\
 &\leq Cr_j^{s-d/2}h^{d/2+s}\ell_h^{\alpha-1}
       +Cr_j^{2s-d/2}h^{d/2}.
 \end{aligned}
\]
On the core, $|x-y|\leq3H$ for $x\in\Omega_\ast'$ and $y\in T_0$.
Theorem~\ref{thm:Green-estimate} (Green's function estimate) and
Young's inequality imply
\[
 \|z\|_{L^2(\Omega_\ast')}
 \leq C\bigl\||\cdot|^{2s-d}\chi_{B_{3H}(0)}\bigr\|_{L^1(\RR^d)}
       \|\varphi\|_{L^2(T_0)}
 \leq CH^{2s}.
\]
Thus \eqref{eqn:global-error-bounds} and $H=C_\ast h\ell_h^{\xi/s}$ imply
\[
 \|z_h\|_{L^2(A_\ast')}
 \leq Ch^{2s}\ell_h^{2\xi}+CH^{2s}\leq CH^{2s}.
\]
Using $\sum_{j=0}^{J-1}r_j^{-s}\leq CH^{-s}$ and
$J+1\leq C\ell_h$, we conclude that
\begin{equation}\label{eqn:L2-sum}
 \begin{aligned}
 &\sum_{j=0}^{J-1}r_j^{d/2-2s}\|z_h\|_{L^2(A_j')}
       +H^{d/2-2s}\|z_h\|_{L^2(A_\ast')}\\
 &\quad\leq Ch^{d/2}\bigl((h/H)^s\ell_h^{\alpha-1}+\ell_h\bigr)+CH^{d/2}\\
 &\quad\leq Ch^{d/2}\ell_h^{\max\{d\xi/(2s),1\}}.
 \end{aligned}
\end{equation}

For $(s,d)=(1/2,1)$, the same argument with the critical bounds in
Lemma~\ref{lem:local-L2-core} and \eqref{eqn:Green-estimate} gives
\[
 \begin{aligned}
 \|z_h\|_{L^2(A_j')}
 &\leq Ch\ell_h^2+C\sqrt{hr_j}\bigl(1+|\log r_j|\bigr),\\
 \|z_h\|_{L^2(A_\ast')}
 &\leq Ch\ell_h^2+CH\bigl(1+|\log H|\bigr).
 \end{aligned}
\]
Using $H=C_\ast h\ell_h^2$ and $1+|\log r_j|\leq C\ell_h$ for $j<J$,
the same summation yields
\begin{equation}\label{eqn:L2-sum-critical}
 \begin{aligned}
 &\sum_{j=0}^{J-1}r_j^{-1/2}\|z_h\|_{L^2(A_j')}
       +H^{-1/2}\|z_h\|_{L^2(A_\ast')}\\
 &\quad\leq Ch^{1/2}\bigl((h/H)^{1/2}\ell_h^2+\ell_h^2\bigr)
 \leq Ch^{1/2}\ell_h^2.
 \end{aligned}
\end{equation}

\textbf{Step 4: the $H^{2s}$ terms.}
Take $\eps:=\min\{s/2,\ell_h^{-1}\}$, so that
$h^{-\eps}\eps^{-\xi}\leq C\ell_h^\xi$.
Write $z_h=(z_h-I_hz)+I_hz$. Applying local inverse inequalities to the
two finite element terms and then Corollary~\ref{cor:local-interp-estimate}
(annular and core interpolation estimates) on enlarged annuli gives
\[
 \begin{aligned}
 |z_h|_{H^{2s}(A_j')}
 &\leq Ch^{-s}|z_h-I_hz|_{H^s(A_j'')}
       +Ch^{-\eps}|z|_{H^{2s-\eps}(A_j^{(3)})}\\
 &\leq Ch^{-s}|z-z_h|_{H^s(A_j'')}
       +Ch^{-\eps}|z|_{H^{2s-\eps}(A_j^{(3)})}.
 \end{aligned}
\]
Here the constants in the inverse estimates are uniform for $0<\eps\leq s/2$.
In the last step, we used the triangle inequality and
\eqref{eqn:local-interp} to estimate $z-I_hz$.
Lemma~\ref{lem:annular-energy-error} and
Lemma~\ref{lem:local-regularity-core} (element sources: Sobolev estimates)
therefore imply
\[
 \begin{aligned}
 |z_h|_{H^{2s}(A_j')}
 &\leq Cr_j^{-d/2}h^{d/2}\ell_h^{\alpha-1}
       +Ch^{-\eps}\eps^{-\xi}(h/r_j)^{d/2}\\
 &\leq Cr_j^{-d/2}h^{d/2}\ell_h^{\alpha-1}.
 \end{aligned}
\]
On the core, the same argument, together with
\eqref{eqn:global-error-bounds} and the global regularity estimate
\eqref{eqn:Sobolev-shift-all}, gives
\[
 \begin{aligned}
 |z_h|_{H^{2s}(A_\ast')}
 &\leq Ch^{-s}|z-z_h|_{H^s(\RR^d)}
       +Ch^{-\eps}|z|_{H^{2s-\eps}(\RR^d)}
 \leq C\ell_h^\xi.
 \end{aligned}
\]
Using $J+1\leq C\ell_h$ and
$H^{d/2}\ell_h^\xi=C_\ast^{d/2}h^{d/2}\ell_h^{\alpha-1}$, we obtain
\begin{equation}\label{eqn:H2s-sum}
 \begin{aligned}
 &\sum_{j=0}^{J-1}r_j^{d/2}|z_h|_{H^{2s}(A_j')}
       +H^{d/2}|z_h|_{H^{2s}(A_\ast')}\\
 &\quad\leq C(J+1)h^{d/2}\ell_h^{\alpha-1}
       +CH^{d/2}\ell_h^\xi
 \leq Ch^{d/2}\ell_h^\alpha.
 \end{aligned}
\end{equation}
Finally, insert \eqref{eqn:H2s-sum} and either \eqref{eqn:L2-sum} or
\eqref{eqn:L2-sum-critical} into \eqref{eqn:discrete-dual-localization}.
This proves \eqref{eqn:discrete-dual-L1} and completes the proof.
\end{proof}

\begin{appendix}
\section{Proof of local regularity estimates when \texorpdfstring{$(s,d)=(1/2,1)$}{(s,d)=(1/2,1)}}\label{app:critical-case}
We prove the critical versions of Lemmas~\ref{lem:local-L2}--\ref{lem:local-regularity-core}.
The $L^2$ estimates follow from the logarithmic Green's function bound.
For the Sobolev estimates, we use the following oscillation estimate to
avoid an additional logarithmic factor in the interior.

\begin{lemma}[Green's function oscillation estimate]\label{lem:critical-green-oscillation}
Let $\Omega\subset\RR$ be a bounded Lipschitz open set, and let $G_\Omega$
be the Green's function of $(-\Delta)^{1/2}$ with zero exterior values.
For $x_0\in\Omega$ and $R>0$ with $\delta(x_0)>8R$, set
$A_R(x_0):=\{z\in\RR:R/2<|z-x_0|<R\}$. Then
\begin{equation}\label{eqn:critical-green-oscillation}
 |G_\Omega(x,y)-G_\Omega(x',y)|
 \leq C\frac{|x-x'|}{R},
 \quad x,x'\in A_R(x_0),\quad |y-x_0|\leq R/4,
\end{equation}
where $C$ is independent of $\Omega$, $x_0$, and $R$.
\end{lemma}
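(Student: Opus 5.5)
The plan is to write $G_\Omega(\cdot,y)$ for $|y-x_0|\le R/4$ as the free-space fundamental solution minus a correction. In $d=1$ with $s=1/2$, the fundamental solution of $(-\Delta)^{1/2}$ on $\RR$ is $\Phi(x)=-\frac1\pi\log|x|$. Accordingly I set
\[
 G_\Omega(x,y)=\Phi(x-y)-H_y(x),
\]
where $H_y$ is $1/2$-harmonic in $\Omega$ and equals $\Phi(\cdot-y)$ on $\Omega^c$; this is the standard representation, valid because $\Omega$ is bounded. Then I estimate the oscillation of the two pieces separately on $A_R(x_0)$.

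\textbf{The singular part.} For $x,x'\in A_R(x_0)$ and $|y-x_0|\le R/4$ we have $|x-y|\ge R/4$, and the whole segment between $x$ and $x'$ (if on the same side of $x_0$) stays at distance at least $R/4$ from $y$. The mean value theorem then gives
\[
 |\Phi(x-y)-\Phi(x'-y)|\le \frac{4}{\pi R}|x-x'|.
\]
If $x$ and $x'$ lie on opposite sides of $x_0$, then $|x-x'|\ge R$. In that case $|\log|x-y|-\log|x'-y||\le\log 5$ already suffices, since $\log5\le \log5\cdot|x-x'|/R$.

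\textbf{The harmonic correction.} $H_y$ is $1/2$-harmonic in $B_{\delta(x_0)}(x_0)\supset B_{8R}(x_0)$. I apply the Poisson-kernel gradient argument of Lemma~\ref{lem:gradient-estimate} directly, with $R_0=4R$ and $R'=3R$ in place of $3R_0/4$, rather than through the comparison function $W$ (since $H_y$ may change sign). I subtract the constant $c:=\Phi(x_0-y')$ for a fixed exterior reference point; more simply, I subtract $-\frac1\pi\log(8R)$, which is harmless since constants are $1/2$-harmonic and have zero gradient. The derivative bound $|\nabla_xP_{R'}(x,z)|\le CR^{-1}P_{R'}(x_0,z)$ for $|x-x_0|\le 2R$ then yields
\[
 |\nabla H_y(x)|\le CR^{-1}\int_{B_{3R}^c}P_{3R}(x_0,z)\,|H_y(z)-c|\,\mathrm dz .
\]

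\textbf{The main obstacle.} This is the step I expect to be hardest: bounding the last integral by a constant independent of $\Omega$ and $R$. The difficulty is that $|H_y-c|$ can grow logarithmically at infinity and is only known explicitly on $\Omega^c$.

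I would handle it by the maximum principle. On $\Omega^c$ we have $H_y-c=-\frac1\pi\log\bigl(|z-y|/(8R)\bigr)$. For $z\in\Omega^c$ we have $|z-y|\ge\delta(x_0)-R/4\ge 7R$, so $H_y-c\le0$ there, and $H_y-c\ge -\frac1\pi\log(1+|z-x_0|/R)$ up to a constant.

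Comparison with the $1/2$-harmonic-in-$B_{8R}$ envelopes then gives two-sided bounds. Since $G_\Omega\ge0$, we have $H_y\le\Phi(\cdot-y)$ inside $\Omega$. The lower bound $H_y-c\ge -\frac1\pi\log(C+|z-x_0|/R)$ follows by the maximum principle, because the right-hand side is superharmonic-dominated: it is comparable to $\Phi$ centered at $x_0$ plus a constant. Together these give
\[
 |H_y(z)-c|\le C\bigl(1+\log(1+|z-x_0|/R)\bigr)\quad\text{for all } z.
\]
Since $P_{3R}(x_0,z)\lesssim R^{1/2}(|z-x_0|^2-9R^2)^{-1/2}|z-x_0|^{-1}$ integrates logarithmic growth finitely, after scaling $z=x_0+R\zeta$ the integral is a constant $C$. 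Hence $|\nabla H_y|\le C/R$ on $A_R(x_0)$, and combining this with the singular part proves \eqref{eqn:critical-green-oscillation}.
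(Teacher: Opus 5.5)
\textbf{There is a genuine gap.} Your splitting $G_\Omega(\cdot,y)=\Phi(\cdot-y)-H_y$ and your bound for the singular part are fine. The step you flagged as the main obstacle fails, because the bound you claim there is false. With the fixed constant $c=-\frac1\pi\log(8R)$, the lower bound $H_y-c\ge -C\bigl(1+\log(1+|z-x_0|/R)\bigr)$ cannot hold inside $\Omega$ uniformly in $\delta(x_0)/R$.

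Take $\Omega=(x_0-L,x_0+L)$ with $L\gg R$. Every $w\in\Omega^c$ satisfies $|w-y|\ge L/2$, so the exterior values of $H_y$ are at most $-\frac1\pi\log(L/2)$. The maximum principle then gives $H_y-c\le-\frac1\pi\log(L/(16R))$ on all of $\RR$. In particular this holds at points with $3R<|z-x_0|<4R$, where your bound would force $|H_y(z)-c|\le C$. Since $P_{3R}(x_0,\cdot)$ is a probability density on $\RR\setminus B_{3R}(x_0)$, your Poisson integral is at least $\frac1\pi\log(L/(16R))$. So the gradient bound your route produces is of order $R^{-1}\log(L/R)$, not the uniform $C/R$ the lemma asserts.

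The justification you give also runs the wrong way. A function comparable to $\Phi(\cdot-x_0)$ plus a constant is a supersolution, since $(-\Delta)^{1/2}\Phi(\cdot-x_0)=\delta_{x_0}\ge0$. Comparison can therefore only place a $\frac12$-harmonic function \emph{below} such a barrier, never above it. The example shows that no lower bound of this form exists anyway.

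\textbf{The missing idea.} You need to use the cancellation between $x$ and $x'$ \emph{before} confronting the unbounded exterior data $\Phi(\cdot-y)$. This requires moving the varying point into the pole via the symmetry of $G_\Omega$:
$G_\Omega(x,y)-G_\Omega(x',y)=\Phi(y-x)-\Phi(y-x')-(H_x-H_{x'})(y)$.

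The function $H_x-H_{x'}$ is $\frac12$-harmonic in $\Omega$, with exterior values $\Phi(\cdot-x)-\Phi(\cdot-x')$. On $\Omega^c$ these values are bounded by $|x-x'|/(7\pi R)$, because $|w-x|,|w-x'|>7R$ there and $|\log a-\log b|\le|a-b|/\min\{a,b\}$. The weak maximum principle for bounded exterior data then bounds $(H_x-H_{x'})(y)$ by the same quantity. This needs no Poisson kernel and gives no dependence on $\Omega$.

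This is exactly the paper's proof. It makes the argument rigorous by mollifying the dipole $\delta_x-\delta_{x'}$, so that its potential $F_\tau=\Phi*f_\tau$ lies in $H^1(\RR)$ (this uses $\int f_\tau=0$). It then applies the maximum principle to $F_\tau-u_\tau$ and lets $\tau\downarrow0$.

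If you wanted to stay within your framework, you would have to subtract a domain-adapted constant such as $H_y(x_0)$. You would then need an oscillation bound for $H_y$ on $\RR\setminus B_{3R}(x_0)$, uniform in $\Omega$, which is essentially the statement itself.
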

\begin{proof}
The fundamental solution is $\Phi(z):=-\pi^{-1}\log|z|$, with
$(-\Delta)^{1/2}\Phi=\delta_0$ in $\mathcal D'(\RR)$;
see \cite[Theorem~2.3 and formula~(1.21)]{Bucur2016}.
Choose a nonnegative $\eta\in C_c^\infty((-1,1))$ with $\int_\RR\eta=1$.
For $0<\tau<R/8$, set
\[
 \eta_\tau(t):=\tau^{-1}\eta(t/\tau),\qquad
 f_\tau:=\eta_\tau(\cdot-x)-\eta_\tau(\cdot-x'),\qquad
 F_\tau:=\Phi*f_\tau.
\]
Then $f_\tau\in C_c^\infty(\Omega)$ and
$(-\Delta)^{1/2}F_\tau=f_\tau$.
Since $\int_\RR f_\tau=0$, the smooth function $F_\tau$ satisfies
$F_\tau(z)=\mathcal{O}(|z|^{-1})$ and
$F_\tau'(z)=\mathcal{O}(|z|^{-2})$ as $|z|\to\infty$.
Thus $F_\tau\in H^1(\RR)\subset H^{1/2}(\RR)$.

Let $u_\tau\in\widetilde H^{1/2}(\Omega)$ solve
$(-\Delta)^{1/2}u_\tau=f_\tau$ in $\Omega$, and set
$h_\tau:=F_\tau-u_\tau$. Then
\[
 a_{1/2}(h_\tau,v)=0\quad\forall v\in\widetilde H^{1/2}(\Omega),
 \qquad h_\tau=F_\tau\quad\text{in }\Omega^c.
\]
Using the same translate at both poles, we write
\[
 F_\tau(z)=-\frac1\pi\int_\RR\eta_\tau(t)
 \bigl(\log|z-x-t|-\log|z-x'-t|\bigr)\,\mathrm{d}t.
\]
For $z\in\Omega^c\cup\{y\}$ and $|t|\leq\tau$, both distances in the
logarithms are at least $R/8$. The inequality
$|\log a-\log b|\leq|a-b|/\min\{a,b\}$ therefore gives
\begin{equation}\label{eqn:critical-potential-bound}
 \|F_\tau\|_{L^\infty(\Omega^c)}+|F_\tau(y)|
 \leq C\frac{|x-x'|}{R}.
\end{equation}
The weak maximum principle yields
$\|h_\tau\|_{L^\infty(\Omega)}\leq\|F_\tau\|_{L^\infty(\Omega^c)}$.
Indeed, with $M:=\|F_\tau\|_{L^\infty(\Omega^c)}$, testing with
$v:=(h_\tau-M)_+\in\widetilde H^{1/2}(\Omega)$ gives
$0=a_{1/2}(h_\tau,v)\geq a_{1/2}(v,v)$, hence $v=0$;
the lower bound follows by applying the same argument to $-h_\tau$.

The Green representation of $u_\tau$ is continuous near $y$, which lies
away from $\supp f_\tau$. Hence \eqref{eqn:critical-potential-bound} gives
$|u_\tau(y)|\leq C|x-x'|/R$. By continuity of $G_\Omega$ off the diagonal,
\[
 \begin{aligned}
 u_\tau(y)
 &=\int_\RR\eta_\tau(t)
   \bigl(G_\Omega(y,x+t)-G_\Omega(y,x'+t)\bigr)\,\mathrm{d}t\\
 &\longrightarrow G_\Omega(y,x)-G_\Omega(y,x')\qquad(\tau\downarrow0).
 \end{aligned}
\]
The symmetry of $G_\Omega$ completes the proof.
\end{proof}

\begin{proof}[Proof of Lemma~\ref{lem:local-L2} when $(s,d)=(1/2,1)$]
By Theorem~\ref{thm:Green-estimate}, $G_\Omega(x,y)\leq C\log(1+C/|x-y|)$.
For $|k-j|\geq2$, we have $|x-y|\simeq r_{j\wedge k}$ on
$\Omega_k\times\Omega_j$, and hence
\[
 \|w_j\|_{L^2(\Omega_k)}
 \leq C\sqrt{r_k r_j}\bigl(1+|\log r_{j\wedge k}|\bigr)
 \|v_j\|_{L^2(\Omega_j)}.
\]
For $|k-j|\leq1$, Young's inequality with the kernel
$\log(1+C/|\cdot|)\chi_{B_{3r_j}(0)}$ yields
\[
 \|w_j\|_{L^2(\Omega_k)}
 \leq C r_j(1+|\log r_j|)\|v_j\|_{L^2(\Omega_j)}.
\]
Since $r_k\simeq r_j$ in this case, these estimates prove
\eqref{eq:L2-comp-critical}. Squaring and summing over $k\geq J$ gives
\eqref{eq:L2-in-core-critical}.
\end{proof}

\begin{proof}[Proof of Lemma~\ref{lem:local-regularity} when $(s,d)=(1/2,1)$]
If $k\geq j-3$, then $\kappa_{jk}=1$, so the annular estimate follows
directly from the global regularity estimate \eqref{eqn:Sobolev-shift-all}.
The core estimate follows in the same way, since $J>j$ and $\kappa_{jJ}=1$.
It remains to consider $k\leq j-4$, for which
$\kappa_{jk}=(r_j/r_k)^{1/2}$.

\textbf{Case 1: $\delta(x_0)\leq8r_k$.}
For $x\in\Omega_i$ and $y\in\Omega_j$, we have
$\delta(x)\leq Cr_{k\wedge i}$ and $\delta(y)\leq Cr_k$.
If $|i-j|\geq2$, then $|x-y|\simeq r_{i\wedge j}$, and
Theorem~\ref{thm:Green-estimate} gives
\[
 G_\Omega(x,y)
 \leq C\log\left(1+\frac{C\sqrt{r_{k\wedge i}r_k}}{r_{i\wedge j}}\right)
 \leq C\bigl(1+(i-k)_+\bigr),
\]
where $(i-k)_+:=\max\{i-k,0\}$.
For $|i-j|\leq1$, Young's inequality applies with the kernel
$\log(1+Cr_k/|\cdot|)\chi_{B_{3r_j}(0)}$, whose $L^1(\RR)$ norm is
bounded by $Cr_j(1+\log(r_k/r_j))$.
Together these estimates yield
\begin{equation}\label{eqn:critical-boundary-L2}
 \|w_j\|_{L^2(\Omega_i)}
 \leq C\sqrt{r_i r_j}\bigl(1+(i-k)_+\bigr)
       \|v_j\|_{L^2(\Omega_j)},\qquad i\geq0.
\end{equation}
Since $v_j=0$ on $\Omega_k'$, Lemma~\ref{lem:local-sobolev}
(localized Sobolev regularity) and \eqref{eqn:critical-boundary-L2} give
\[
 \begin{aligned}
 |w_j|_{H^{1-\eps}(A_k)}
 &\leq C\eps^{-\xi}r_k^{1/2}
       \sum_{i=0}^\infty r_i^{1/2}\|w_j\|_{L^2(\Omega_i)}
       r_{k\wedge i}^{-2}\\
 &\leq C\eps^{-\xi}\sqrt{r_jr_k}
       \biggl(\sum_{i=0}^k r_i^{-1}
       +r_k^{-2}\sum_{i=k+1}^\infty r_i(1+i-k)\biggr)
       \|v_j\|_{L^2(\Omega_j)}\\
 &\leq C\eps^{-\xi}(r_j/r_k)^{1/2}\|v_j\|_{L^2(\Omega_j)}.
 \end{aligned}
\]
The local $L^2$ term is included in the sum for $|i-k|\leq2$.
The last step uses $\sum_{i=0}^k r_i^{-1}\leq Cr_k^{-1}$ and
$\sum_{i>k}r_i(1+i-k)\leq Cr_k$.

\textbf{Case 2: $\delta(x_0)>8r_k$.}
Here $A_k\subset\Omega$ and $\Omega_j\subset B_{r_k/4}(x_0)$.
Lemma~\ref{lem:critical-green-oscillation} and the Green representation give
\[
 |w_j(x)-w_j(x')|
 \leq Cr_k^{-1}r_j^{1/2}|x-x'|\|v_j\|_{L^2(\Omega_j)},
 \qquad x,x'\in A_k.
\]
Using the normalization in \eqref{eq:slobo}, for $0<t<1$ we obtain
\[
 \begin{aligned}
 |w_j|_{H^t(A_k)}^2
 &\leq Cc_{1,t}r_jr_k^{-2}\|v_j\|_{L^2(\Omega_j)}^2
       \int_{A_k}\int_{A_k}|x-x'|^{1-2t}\,\mathrm{d}x\mathrm{d}x'\\
 &\leq C\frac{c_{1,t}}{1-t}r_jr_k^{1-2t}\|v_j\|_{L^2(\Omega_j)}^2
 \leq Cr_jr_k^{1-2t}\|v_j\|_{L^2(\Omega_j)}^2,
 \end{aligned}
\]
since $c_{1,t}\leq C(1-t)$.
Taking $t=1-\eps$ and using $r_k\leq1$ yields
\[
 |w_j|_{H^{1-\eps}(A_k)}
 \leq Cr_k^\eps(r_j/r_k)^{1/2}\|v_j\|_{L^2(\Omega_j)}
 \leq C\eps^{-\xi}\kappa_{jk}\|v_j\|_{L^2(\Omega_j)},
\]
completing the proof.
\end{proof}

\begin{proof}[Proof of Lemma~\ref{lem:local-L2-core} when $(s,d)=(1/2,1)$]
For $k<J$, we have $|x-y|\simeq r_k$ on $\Omega_k\times T_0$.
Theorem~\ref{thm:Green-estimate} therefore gives
\[
 \|z\|_{L^2(\Omega_k)}
 \leq C r_k^{1/2}(1+|\log r_k|)\|\varphi\|_{L^1(T_0)}
 \leq C\sqrt{h r_k}(1+|\log r_k|)\|\varphi\|_{L^2(T_0)}.
\]
For the core, $|x-y|\leq2r_J$ on $\Omega_\ast\times T_0$.
Young's inequality gives
\[
 \begin{aligned}
 \|z\|_{L^2(\Omega_\ast)}
 &\leq C\bigl\|\log(1+C/|\cdot|)\bigr\|_{L^2(B_{2r_J}(0))}
       \|\varphi\|_{L^1(T_0)}\\
 &\leq C\sqrt{h r_J}(1+|\log r_J|)\|\varphi\|_{L^2(T_0)}.
 \end{aligned}
\]
This proves the assertion.
\end{proof}

\begin{proof}[Proof of Lemma~\ref{lem:local-regularity-core} when $(s,d)=(1/2,1)$]
The dyadic decomposition of $\varphi$ used in the proof for $2s<d$ applies
unchanged. Using the critical version of Lemma~\ref{lem:local-regularity}
in that argument proves \eqref{eqn:local-regularity-core}.
The core estimate \eqref{eqn:local-regularity-core-core} follows from
\eqref{eqn:Sobolev-shift-all}, completing the proof.
\end{proof}

\section{Local energy error estimate up to the boundary}\label{app:local-energy-error}
\begin{proof}[Proof of Theorem~\ref{thm:local-energy-error}]
We follow \cite[Theorem~5.3]{BorthagarayLeykekhmanNochetto2021}, allowing
$B_R$ to intersect $\partial\Omega$, for the full range $0<s<1$.
Quasi-uniformity simplifies the discrete term below.
Let $\eta\in C_0^\infty(B_{3R/4})$ satisfy $0\leq\eta\leq1$,
$\eta=1$ on $B_{R/2}$ and $|\nabla^m\eta|\leq CR^{-m}$ for $m=1,2$.
With $\Pi_h$ denoting nodal interpolation, set
$\psi:=\eta^2u_h-\Pi_h(\eta^2u_h)$.
Since $u_h$ vanishes on $\partial\Omega_h$, we have
$\Pi_h(\eta^2u_h)\in V_h$. Moreover, $h\leq R/16$ gives
$\supp\psi\subset B_{13R/16}$, and its zero extension belongs to
$H_0^1(B_R)$.

\textbf{Step 1: superapproximation.}
All elements on which $\psi$ is nonzero lie in $B_R$.
Summing the elementwise superapproximation estimate
\cite[(5.3)]{BorthagarayLeykekhmanNochetto2021} gives
\begin{equation}\label{eqn:boundary-integer-superapprox}
 \|\psi\|_{L^2(B_R)}+h|\psi|_{H^1(B_R)}
 \leq C\frac{h^2}R\bigl(
       |\eta u_h|_{H^1(B_R)}+R^{-1}\|u_h\|_{L^2(B_R)}\bigr).
\end{equation}
On each element $T$, set $\eta_T:=\eta(x_T)$ for some $x_T\in T$ and
$q_T:=(\eta-\eta_T)u_h$, so that $\eta u_h=\eta_Tu_h+q_T$.
The bounds $\|\eta-\eta_T\|_{L^\infty(T)}\leq Ch R^{-1}$ and
$\|\nabla\eta\|_{L^\infty(T)}\leq CR^{-1}$, the inverse estimate for
$u_h$ and scaling give
\[
 \begin{gathered}
 \|q_T\|_{L^2(T)}+h|q_T|_{H^1(T)}
 \leq ChR^{-1}\|u_h\|_{L^2(T)},\\
 |q_T|_{H^s(T)}
 \leq C\bigl(h^{-s}\|q_T\|_{L^2(T)}+h^{1-s}|q_T|_{H^1(T)}\bigr)
 \leq Ch^{1-s}R^{-1}\|u_h\|_{L^2(T)}.
 \end{gathered}
\]
Applying the inverse estimate to the affine function $\eta_Tu_h$ yields
\[
 \begin{aligned}
 |\eta u_h|_{H^1(T)}
 &\leq Ch^{s-1}\bigl(|\eta u_h|_{H^s(T)}+|q_T|_{H^s(T)}\bigr)
       +|q_T|_{H^1(T)}\\
 &\leq Ch^{s-1}|\eta u_h|_{H^s(T)}+CR^{-1}\|u_h\|_{L^2(T)}.
 \end{aligned}
\]
Since all elements meeting $\supp\eta$ lie in $B_R$ and
$\sum_T|\eta u_h|_{H^s(T)}^2\leq|\eta u_h|_{H^s(B_R)}^2$,
squaring and summing gives
\begin{equation}\label{eqn:boundary-cutoff-inverse}
 |\eta u_h|_{H^1(B_R)}
 \leq Ch^{s-1}|\eta u_h|_{H^s(B_R)}
       +CR^{-1}\|u_h\|_{L^2(B_R)}.
\end{equation}
Since the zero extension of $\psi$ belongs to $H^1(\RR^d)$,
interpolation and Young's inequality yield
\[
 \begin{aligned}
 h^s|\psi|_{H^s(B_R)}
 &\leq C\|\psi\|_{L^2(B_R)}^{1-s}
       \bigl(h|\psi|_{H^1(B_R)}\bigr)^s \leq C\bigl(\|\psi\|_{L^2(B_R)}+h|\psi|_{H^1(B_R)}\bigr).
 \end{aligned}
\]
Together with \eqref{eqn:boundary-integer-superapprox}, this gives
\begin{equation}\label{eqn:boundary-interpolated-superapprox}
 \|\psi\|_{L^2(B_R)}+h^s|\psi|_{H^s(B_R)}
 \leq C\frac{h^2}R\bigl(
       |\eta u_h|_{H^1(B_R)}+R^{-1}\|u_h\|_{L^2(B_R)}\bigr).
\end{equation}
The rough bound $|\eta u_h|_{H^1(B_R)}\leq Ch^{-1}\|u_h\|_{L^2(B_R)}$
and $h\leq R$ give the first estimate below.
Substituting \eqref{eqn:boundary-cutoff-inverse} into
\eqref{eqn:boundary-interpolated-superapprox} and dividing by $h^s$
gives the second:
\begin{equation}\label{eqn:boundary-superapprox}
 \begin{aligned}
 \|\psi\|_{L^2(B_R)}+h^s|\psi|_{H^s(B_R)}
 &\leq C\frac hR\|u_h\|_{L^2(B_R)},\\
 |\psi|_{H^s(B_R)}
 &\leq C\frac hR|\eta u_h|_{H^s(B_R)}
       +C\frac{h^{2-s}}{R^2}\|u_h\|_{L^2(B_R)}.
 \end{aligned}
\end{equation}
This controls the full seminorm on $B_R$, including interactions
across $\partial\Omega_h$.

\textbf{Step 2: localization.}
Galerkin orthogonality gives
\[
 \begin{aligned}
 a_s(u_h,\eta^2u_h)
 &=a_s(u_h,\Pi_h(\eta^2u_h))+a_s(u_h,\psi)\\
 &=a_s(u,\eta^2u_h)-a_s(u,\psi)+a_s(u_h,\psi).
 \end{aligned}
\]
Using \eqref{eqn:boundary-superapprox} and Cauchy--Schwarz to estimate
the cutoff terms as in Steps~2--4 of the proof in \cite{BorthagarayLeykekhmanNochetto2021}, we obtain,
for any $\theta>0$,
\begin{equation}\label{eqn:boundary-energy-reduction}
 \begin{aligned}
 |\eta u_h|_{H^s(B_R)}^2
 &\leq\theta|\eta u_h|_{H^s(B_R)}^2+C_\theta|u|_{H^s(B_R)}^2\\
 &\quad+C_\theta R^{-2s}
       \bigl(\|u\|_{L^2(B_R)}^2+\|u_h\|_{L^2(B_R)}^2\bigr)\\
 &\quad+C_\theta R^{d+2s}\biggl(
       \int_{B_R^c}\frac{|u(y)|+|u_h(y)|}{|y-x_0|^{d+2s}}
       \,\mathrm{d}y\biggr)^2 +|a_s(u_h,\psi)|.
 \end{aligned}
\end{equation}
These estimates use the zero extensions and the separation of the
cutoff supports from $B_R^c$; they do not require $B_R\subset\Omega$.

\textbf{Step 3: the discrete term.}
Since $\psi=0$ outside $B_{13R/16}$, we write
\[
 \begin{aligned}
 a_s(u_h,\psi)
 &=\frac{c_{d,s}}2\int_{B_{7R/8}}\int_{B_{7R/8}}
   \frac{(u_h(x)-u_h(y))(\psi(x)-\psi(y))}{|x-y|^{d+2s}}
   \,\mathrm{d}y\mathrm{d}x\\
 &\quad+c_{d,s}\int_{B_{7R/8}}\int_{B_{7R/8}^c}
   \frac{(u_h(x)-u_h(y))\psi(x)}{|x-y|^{d+2s}}
   \,\mathrm{d}y\mathrm{d}x
 =:J_1+J_2.
 \end{aligned}
\]
The local inverse estimate on $B_{7R/8}\subset B_R$ and
\eqref{eqn:boundary-superapprox} give
\begin{equation}\label{eqn:boundary-near-term}
 \begin{aligned}
 |J_1|
 &\leq Ch^{-s}\|u_h\|_{L^2(B_R)}|\psi|_{H^s(B_R)}\\
 &\leq C\frac{h^{1-s}}R\|u_h\|_{L^2(B_R)}
       |\eta u_h|_{H^s(B_R)}
       +C\frac{h^{2-2s}}{R^2}\|u_h\|_{L^2(B_R)}^2\\
 &\leq\theta|\eta u_h|_{H^s(B_R)}^2
       +C_\theta R^{-2s}\|u_h\|_{L^2(B_R)}^2,
 \end{aligned}
\end{equation}
where the last inequality uses $h\leq R$, $s<1$ and Young's inequality.
For $J_2$, the distance between $\supp\psi$ and $B_{7R/8}^c$ is at
least $R/16$, while $|x-y|\geq |y-x_0|/C$ for
$x\in\supp\psi$ and $y\in B_R^c$. Splitting the $y$-integral at
$\partial B_R$ and using \eqref{eqn:boundary-superapprox}, we obtain
\begin{equation}\label{eqn:boundary-far-term}
 \begin{aligned}
 |J_2|
 &\leq CR^{-2s}\|\psi\|_{L^2(B_R)}\|u_h\|_{L^2(B_R)}
   +CR^{d/2}\|\psi\|_{L^2(B_R)}
       \int_{B_R^c}\frac{|u_h(y)|}{|y-x_0|^{d+2s}}\,\mathrm{d}y\\
 &\leq CR^{-2s}\|u_h\|_{L^2(B_R)}^2
   +CR^{d+2s}\biggl(
       \int_{B_R^c}\frac{|u_h(y)|}{|y-x_0|^{d+2s}}\,\mathrm{d}y\biggr)^2.
 \end{aligned}
\end{equation}
Inserting \eqref{eqn:boundary-near-term} and \eqref{eqn:boundary-far-term}
into \eqref{eqn:boundary-energy-reduction},
choosing $\theta=1/4$ and absorbing the two energy terms gives,
since $\eta=1$ on $B_{R/2}$,
\begin{equation}\label{eqn:boundary-discrete-energy}
 \begin{aligned}
 |u_h|_{H^s(B_{R/2})}^2
 &\leq C|u|_{H^s(B_R)}^2
       +CR^{-2s}\bigl(\|u\|_{L^2(B_R)}^2+\|u_h\|_{L^2(B_R)}^2\bigr)\\
 &\quad+CR^{d+2s}\biggl(
       \int_{B_R^c}\frac{|u(y)|+|u_h(y)|}{|y-x_0|^{d+2s}}
       \,\mathrm{d}y\biggr)^2.
 \end{aligned}
\end{equation}
For any $v_h\in V_h$, we have $R_h^s(u-v_h)=u_h-v_h$.
Apply \eqref{eqn:boundary-discrete-energy} with $(u,u_h)$ replaced by
$(u-v_h,u_h-v_h)$.
Using $u_h-v_h=(u-v_h)-(u-u_h)$ and the triangle inequality then
gives \eqref{eqn:local-energy-error-split}, since all functions vanish
outside $\Omega$.
\end{proof}

\end{appendix}

\bibliographystyle{amsplain}
\bibliography{frac_linf_lesshalf}   

\end{document}